\documentclass[12pt]{amsart}
\usepackage{amsmath,amssymb,amsthm,amsrefs,bbm,esvect,float,graphicx,mathrsfs}
\usepackage{mathtools}
\usepackage{scalerel}
\usepackage{enumerate}
\usepackage{hyperref}
\makeatletter
\renewcommand{\BibLabel}{%
  \Hy@raisedlink{\hyper@anchorstart{cite.\CurrentBib}\relax\hyper@anchorend}%
  [\thebib]%
}
\makeatother
\usepackage{cite}
\usepackage{color}
\usepackage[usestackEOL]{stackengine}
\usepackage[a4paper,width=16.5cm,top=3cm,bottom=2.8cm]{geometry}

\newtheorem{theorem}{Theorem}[section]
\newtheorem{proposition}[theorem]{Proposition}

\newtheorem{lemma}[theorem]{Lemma}
\newtheorem{definition}[theorem]{Definition}
\newtheorem{remark}[theorem]{Remark}

\theoremstyle{plain}
\newtheorem*{theorem*}{Theorem}
\newtheorem*{question*}{Question}
\newtheorem*{example*}{Example}

\def\dashint{\,\ThisStyle{\ensurestackMath{%
			\stackinset{c}{.2\LMpt}{c}{.5\LMpt}{\SavedStyle-}{\SavedStyle\phantom{\int}}}%
		\setbox0=\hbox{$\SavedStyle\int\,$}\kern-\wd0}\int}
\newcommand{\rn}{\mathbb{R}^n}

\newcommand{\zn}{\mathbb{Z}^n}

\newcommand{\zed}{\mathbb{Z}}




\newcommand{\honern}{{H}^1(\mathbb{R}^n)}

\newcommand{\hprn}{{H}^p(\mathbb{R}^n)}

\newcommand{\dhprn}{{H}_d^p(\mathbb{R}^n)}
\newcommand{\hqrn}{{H}^q(\mathbb{R}^n)}
\newcommand{\dhqrn}{{H}_d^q(\mathbb{R}^n)}
\newcommand{\hrrn}{{H}^r(\mathbb{R}^n)}

\newcommand{\hprnn}[1]{\|#1\|_{\hprn}}

\newcommand{\hqrnn}[1]{\|#1\|_{\hqrn}}

\newcommand{\hrrnn}[1]{\|#1\|_{\hrrn}}

\newcommand{\bmorn}{{BMO}(\rn)}
\newcommand{\bmor}{{BMO}(\mathbb{R})}

\newcommand{\bmornn}[1]{\|#1\|_{{BMO}(\rn)}}
\newcommand{\bmoronen}[1]{\|#1\|_{{\bmor}}}

\newcommand{\bmodyadicrn}{{BMO_d}(\rn)}


\newcommand{\lonern}{{L}^1(\mathbb{R}^n)}

\newcommand{\lonernn}[1]{\|#1\|_{\lonern}}

\newcommand{\lprn}{{L}^p(\mathbb{R}^n)}
\newcommand{\lqrn}{{L}^q(\mathbb{R}^n)}
\newcommand{\lrrn}{{L}^r(\mathbb{R}^n)}

\newcommand{\lpn}[1]{\|#1\|_{{L}^p}}
\newcommand{\lprnn}[1]{\|#1\|_{\lprn}}
\newcommand{\lqrnn}[1]{\|#1\|_{\lqrn}}
\newcommand{\lrrnn}[1]{\|#1\|_{\lrrn}}

\newcommand{\linfty}{{L}^{\infty}}
\newcommand{\linftyrn}{{L}^{\infty}(\mathbb{R}^n)}

\newcommand{\linftyn}[1]{\|#1\|_{\linfty}}
\newcommand{\linftyrnn}[1]{\|#1\|_{\linftyrn}}

\newcommand{\lonernloc}{{L}_{loc}^1(\mathbb{R}^n)}

\newcommand{\avr}[2]{\left\langle#1\right\rangle_{#2}}



\newcommand{\hlip}[2]{\dot{\Lambda}^{#1}(\mathbb{R}^{#2})}

\newcommand{\hlipn}[3]{\|#1\|_{\hlip{#2}{#3}}}
\newcommand{\dhlipalpharn}[1]{\|#1\|_{\dot{\Lambda}_d^{\alpha}(\rn)}}


\begin{document}
	
	\title{The Operator Norm of Paraproducts on Hardy Spaces}
	\author{Shahaboddin Shaabani}
	\address{Department of Mathematics and Statistics\\Concordia University}
	\email{shahaboddin.shaabani@concordia.ca}
	
	\begin{abstract}
For a tempered distribution \( g \), and \( 0 < p, q, r < \infty \) with \(\frac{1}{q} = \frac{1}{p} + \frac{1}{r}\), we show that the operator norm of a Fourier paraproduct \(\Pi_g\), of the form
\[
\Pi_{g}(f) := \sum_{j \in \mathbb{Z}} (\varphi_{2^{-j}} * f) \Delta_jg,
\]
from \( H^p(\mathbb{R}^n) \) to \( \dot{H}^q(\mathbb{R}^n) \) is comparable to \( \|g\|_{\dot{H}^r(\mathbb{R}^n)} \). We also establish a similar result for dyadic paraproducts acting on dyadic Hardy spaces.

	\end{abstract}	
	\keywords{Paraproducts, Hardy spaces, Sparse domination}
	\subjclass{42B30, 42B25}
	\date{}
	\maketitle
	\section{Introduction}	
Let $\Delta_j$ ($j \in \zed$) be the Littlewood-Paley operators associated with a Schwarz function $\psi$, and let $S_j$ be its partial sum operators (precise definitions will be given in the next section). For an integer $l$ and tempered distributions $f$ and $g$, the bilinear form 
\begin{equation*}
	\Pi(f, g) := \sum_{j \in \zed} S_{j-l}(f) \Delta_j(g),
\end{equation*}
is an example of a paraproduct.
 There are also dyadic counterparts of these bilinear forms, which will be discussed later. Paraproducts are among the most fundamental bilinear forms in harmonic analysis and PDEs. Loosely speaking, they are "half products," and historically, their first appearance was in Bony's para-differential calculus \cite{MR0631751}, where they were used to extend the work of Coifman and Meyer on pseudo-differential calculus with minimal regularity assumptions on their symbols \cite{MR0518170}.
  Later, they were used in the proof of the $T(1)$ theorem by David and Journé \cite{MR0763911}, and since T. Figiel's work on the representation of singular integral operators in terms of simpler dyadic ones \cite{MR1110189}, they have proven to be an essential component of such operators \cite{MR2912709}. It is well-known that for real $p > 0$ and sufficiently large $l$, the operator $\Pi$ is bounded from $\hprn \times \bmorn$ to $\hprn$ \cite{MR1934198,MR1853518}, where $\hprn$ is the real Hardy space and $\bmorn$ is the space of functions of bounded mean oscillation \cite{MR0131498,MR0447953}.
  Also, for any triple of positive real numbers $(p, q, r)$ with $\frac{1}{p} + \frac{1}{r} = \frac{1}{q}$, the bilinear form $\Pi$ maps $\hprn \times \hrrn$ to $\hqrn$ \cite{MR2730492,MR1853518}. In addition, the boundedness properties of analogs of these objects in other settings, including multi-parameter and multi-linear settings, have been studied extensively. We refer the reader to \cite{MR2134868,MR2313844,MR2320408,MR3375865} and the references therein. The purpose of the present paper is to show that these bounds cannot be improved in the sense that by freezing $g$, the operator norm of the corresponding operator, $\Pi_g$, is comparable to a norm of $g$ predicted by the current bounds. See \ref{dyadictheorem} and \ref{continuousparp}.\\

Let us begin by reviewing the current known results in this area. The first result on the operator norm of paraproducts seems to be in \cite{MR2381883}, where it is shown that for the dyadic paraproduct operator $\pi_g$, we have
\[
\|\pi_g\|_{L^p(\mathbb{R}) \to L^p(\mathbb{R})} \simeq \|g\|_{\bmor}, \quad 1 < p < \infty.
\]
Also, recently the author in \cite{hänninen2023weighted} has shown among other things that for $\pi_g$ it holds that
\[
\|\pi_g\|_{\lprn\to\lqrn}\simeq \|g\|_{\dot{L}^r(\rn)}, \quad \frac{1}{q}=\frac{1}{p}+\frac{1}{r}, \quad 1<q<p<\infty.
\]
The result in \cite{hänninen2023weighted} is stronger than what we have stated here and was obtained in the Bloom setting, but we are not concerned with that here. To the best of our knowledge, these are all the results known about the operator norm of paraproducts. The ideas used in \cite{hänninen2023weighted} are similar to those methods employed in \cite{MR4338459}, where the author completed the characterization of the operator norm of commutators of a non-degenerate Calderón-Zygmund operator $T$ and pointwise multiplication with a locally integrable function $b$.
 In fact, it is shown in \cite{MR4338459} that for $1<p,q<\infty$ we have:
\[
	\|\left[T,b\right]\|_{\lprn\to\lqrn}\simeq \begin{cases*}
		\|b\|_{\dot{L}^{r}(\rn)}&$\frac{1}{q}=\frac{1}{p}+\frac{1}{r}$,\quad $p>q$\\
		\bmornn{b}& \quad \quad \quad \quad \quad $p=q$\\
		\|b\|_{\dot{C}^{\alpha}(\rn)}& $\frac{1}{q}=\frac{1}{p}-\frac{\alpha}{n}$, \quad
		 $p<q\le \frac{np}{(n-p)_+}$,
	\end{cases*}
\]
where in the above $\dot{L}^{r}(\rn)$ denotes $\lrrn$ modulo constants and $\dot{C}^{\alpha}(\rn)$ is the homogeneous Holder space. The strategy of the proof, used both in \cite{MR4338459} and \cite{hänninen2023weighted}, is to bound the local mean oscillation of the symbol of the operator by testing it with suitable test functions. Then, using boundedness of the operator will finish the job for all cases except for $p>q$. To resolve this, the local mean oscillation inequality \cite{MR2721744,MR4007575,operatorfree} is employed to obtain an appropriate sparse domination of the symbol. Then, after using a duality argument and a probabilistic linearization, the result follows from boundedness of the operator.\\
	
Here, although we are dealing with the operator from $H^p$ to $H^q$, for all cases where $p \le q$, our approach is essentially the same as in \cite{MR2381883}, \cite{MR4338459}, and \cite{hänninen2023weighted}. To show that the symbol belongs to $\bmorn$ or $\hlip{\alpha}{n}$, we bound an appropriate mean oscillation of the operator's symbol.
 However, when $q < p$, and especially when $q < 1$, an argument based on duality will not work. This is because the Hahn-Banach theorem fails for $H^q$ when $0 < q < 1$ \cite{MR0259579}, so we cannot guarantee that the operator norm of the operator and its adjoint are the same. Instead, by using a suitable sparse domination for the square function of the symbol $g$, we can handle all cases where $0 < q < p < \infty$.
 This sparse domination is not new; it is a special case of the general method formulated in \cite{operatorfree}. This method shows that almost all known sparse domination results follow a general approach, which we use here. We should also mention that we first encountered this idea in \cite{MR2157745}, where it is used to construct an atomic decomposition for dyadic $H^1$, based on the square function. Notably, the construction idea in \cite{MR2157745} is derived from \cite{MR0671315}, where it is shown that functions in Hardy spaces have an atomic decomposition into atoms, whose supporting cubes form a sparse family.

\section{preliminaries}
\subsection{Notation}
Throughout the paper, we use $\lesssim$, $\gtrsim$, and $\simeq$ to suppress constants and parameters in inequalities that are not crucial to our discussion; this will be clear from the context. We use $B_r(x)$ to denote the ball of radius $r$ centered at $x$. A cube $Q$ in $\rn$ refers to a cube with sides parallel to the coordinate axes. We denote its Lebesgue measure by $|Q|$, its side length by $l(Q)$, and its dilation by a factor $a$ as $aQ$. A dyadic cube is a cube $Q$ of the form $Q = 2^k(m + [0,1]^n)$, where $k \in \zed$ and $m \in \zn$. For such a cube, all $2^n$ cubes obtained by bisecting its sides are also dyadic and are called its children. Any dyadic cube $Q$ is a child of a unique cube, called the parent of $Q$, and denoted by $\hat{Q}$. We use $\mathcal{D}(Q)$ to denote the family of all dyadic cubes within $Q$, and $\mathcal{D}$ for all dyadic cubes in $\rn$.
For a locally integrable function $f$, its non-increasing rearrangement is denoted by
\[
f^*(t) := \inf\left\{ s \mid |\{ |f| > s \} | \le t \right\}, \quad t > 0,
\]
and its average with respect to the Lebesgue measure over a cube $Q$ is given by
\[
\avr{f}{Q} := \frac{1}{|Q|} \int_{Q} f.
\]
Additionally, for $0 < p < \infty$, we use $\text{osc}_p(f, Q)$ to denote its $p$-mean oscillation and $\text{osc}(f, Q)$ to denote its pointwise oscillation over $Q$:
\[
\text{osc}_p(f, Q) := \avr{|f - \avr{f}{Q}|^p}{Q}^{\frac{1}{p}}, \quad \text{osc}(f, Q) := \sup_{x, y \in Q} |f(x) - f(y)|.
\]

\subsection{Sparse Families}
Here, we review the definition of sparse families, one of their useful properties, and the general method of sparse domination introduced in \cite{operatorfree}.

\begin{definition}
	Let $0 < \eta < 1$, and let $\mathcal{C}$ be a family of cubes that are not necessarily dyadic. We say that $\mathcal{C}$ is \emph{$\eta$-sparse} if, for each $Q \in \mathcal{C}$, there exists a subset $E_Q \subseteq Q$ such that $|E_Q| \ge \eta |Q|$, and for any two cubes $Q, Q' \in \mathcal{C}$, the sets $E_Q$ and $E_{Q'}$ are disjoint.
\end{definition}

A useful property of sparse families, which is crucial for us here, is that although they may have many overlaps, they behave as if they are disjoint. A simple example of this phenomenon is given by the following lemma, which is well-known and whose proof can be found in \cite{MR4338459, hänninen2023weighted}.

\begin{lemma}\label{lemmasparse}
	For an $\eta$-sparse family of cubes $\mathcal{C}$, nonnegative numbers $\{a_Q\}_{Q \in \mathcal{D}}$, and $0 < p < \infty$, we have
	\begin{align}
		\eta^{\frac{1}{p}} \left( \sum_{Q \in \mathcal{C}} a_Q^p |Q| \right)^{\frac{1}{p}} 
		&\lesssim \lpn{\sum_{Q \in \mathcal{C}} a_Q \chi_Q} 
		\lesssim \eta^{-1} \left( \sum_{Q \in \mathcal{C}} a_Q^p |Q| \right)^{\frac{1}{p}}. \label{lpnormsparse}
	\end{align}
\end{lemma}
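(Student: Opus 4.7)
The plan is as follows. The lower bound is automatic from the definition of sparseness: since the sets $\{E_Q\}_{Q\in\mathcal{C}}$ are pairwise disjoint, $E_Q\subset Q$, and $|E_Q|\geq\eta|Q|$, at every point $x\in E_Q$ one has the pointwise inequality $\sum_{Q'\in\mathcal{C}}a_{Q'}\chi_{Q'}(x)\geq a_Q$. Raising to the $p$-th power and integrating over the disjoint union $\bigsqcup_Q E_Q$ yields
\[
\int\Big(\sum_{Q\in\mathcal{C}}a_Q\chi_Q\Big)^{p}dx\;\geq\;\sum_{Q\in\mathcal{C}}a_Q^p|E_Q|\;\geq\;\eta\sum_{Q\in\mathcal{C}}a_Q^p|Q|,
\]
which is the claimed lower bound (in fact with implicit constant $1$).

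For the upper bound I would split on whether $p\leq 1$ or $p>1$. When $0<p\leq 1$, the elementary subadditivity $(\sum_i x_i)^p\leq\sum_i x_i^p$ for nonnegative $x_i$, applied pointwise and integrated, already gives
\[
\Big\|\sum_{Q\in\mathcal{C}}a_Q\chi_Q\Big\|_{L^p}^{p}\;\leq\;\sum_{Q\in\mathcal{C}}a_Q^p|Q|,
\]
which is stronger than the claim (since $\eta^{-1}\geq 1$). Thus the only substantive case is $1<p<\infty$, which I would handle by duality. Fix nonnegative $f$ with $\|f\|_{L^{p'}}=1$ and observe
\[
\int f\sum_{Q\in\mathcal{C}}a_Q\chi_Q\,dx\;=\;\sum_{Q\in\mathcal{C}}a_Q\int_Q f.
\]
The key step uses sparseness through the Hardy--Littlewood maximal operator $M$: since $\frac{1}{|Q|}\int_Q f\leq Mf(x)$ for every $x\in Q\supset E_Q$, averaging this inequality over $E_Q$ and using $|Q|\leq\eta^{-1}|E_Q|$ gives $\int_Q f\leq\eta^{-1}\int_{E_Q}Mf$. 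Substituting, applying H\"older's inequality, and exploiting disjointness of $\{E_Q\}$,
\[
\sum_{Q\in\mathcal{C}}a_Q\int_Q f\;\leq\;\eta^{-1}\int Mf\cdot\sum_{Q\in\mathcal{C}}a_Q\chi_{E_Q}\,dx\;\leq\;\eta^{-1}\|Mf\|_{L^{p'}}\Big(\sum_{Q\in\mathcal{C}}a_Q^p|Q|\Big)^{1/p}.
\]
The Hardy--Littlewood maximal theorem ($p'>1$) controls $\|Mf\|_{L^{p'}}$ by an absolute constant, and taking the supremum over $f$ closes the bound.

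The only substantive input is the boundedness of $M$ on $L^{p'}$; the remainder is bookkeeping with disjointness of $\{E_Q\}$, and the case $0<p\leq 1$ does not even need that. I therefore do not anticipate any genuine obstacle: the entire proof is a short computation whose content is encoded in the definition of an $\eta$-sparse family.
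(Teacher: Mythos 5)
Your proof is correct, and it is essentially the standard argument that the paper itself defers to its references for: the lower bound from disjointness of the sets $E_Q$, the case $0<p\le 1$ from subadditivity of $t\mapsto t^p$, and the case $p>1$ by duality together with the $L^{p'}$-boundedness of the Hardy--Littlewood maximal operator. No gaps; the implicit constants you obtain are consistent with the statement.
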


We now describe the general sparse domination method introduced in \cite{operatorfree}.

Let $\{f_Q\}_{Q \in \mathcal{D}}$ be a family of measurable functions defined on $\rn$, which we consider as ``localizations'' of a function or operator. For every two dyadic cubes $P$ and $Q$ with $P \subseteq Q$, let $f_{P,Q}$ be a measurable function satisfying
\[
|f_{P,Q}| \le |f_P| + |f_Q|,
\]
where $f_{P,Q}$ will serve as the ``difference'' between two localizations. Finally, for such a family of functions, the maximal sharp function is defined as
\begin{equation}
	m_Q^{\#} f(x) := \sup_{\substack{x \in P \\ P \subseteq Q}} \text{osc}(f_{P,Q}, P), \quad x \in Q.
\end{equation}
Then, it is proved in \cite{operatorfree} that the following holds:

\begin{theorem}[Lerner, Lorist, Ombrosi]\label{operatorfree}
	Let $\{f_Q\}_{Q \in \mathcal{D}}$ and $\{f_{P,Q}\}$ be as described above. For a dyadic cube $Q_0$ and $0 < \eta < 1$, there exists an $\eta$-sparse family of cubes $\mathcal{C}$ contained in $Q_0$ such that for almost every $x$ in $Q_0$,
	\begin{equation}
		f_{Q_0}(x) \lesssim \sum_{Q \in \mathcal{C}} \gamma_Q \chi_Q(x),
	\end{equation}
	where
	\begin{equation}
		\gamma_Q = (f_Q \chi_Q)^* \left(\frac{1 - \eta}{2^{n+2}} |Q| \right) + (m_Q^{\#} f)^* \left(\frac{1 - \eta}{2^{n+2}} |Q| \right).
	\end{equation}
\end{theorem}

As shown in \cite{operatorfree}, almost all known sparse dominations are special cases of the general theorem above. We refer the reader to \cite{operatorfree, MR4007575} and the references therein for a general theory of sparse domination and dyadic calculus.

\subsection{Dyadic Hardy Spaces}
Next, we review the definitions and basic properties of the dyadic Hardy spaces that we are concerned with, beginning with the Haar basis.

For a dyadic interval $I$, let $h_I = |I|^{-\frac{1}{2}} (\chi_{I^-} - \chi_{I^+})$, where $I^-$ and $I^+$ are the left and right halves of $I$, respectively. These functions form the well-known Haar basis for $L^2(\mathbb{R})$. In higher dimensions, the Haar basis is defined as follows: First, we let $h^1_I = h_I$ and $h^0_I = |I|^{-\frac{1}{2}} \chi_I$. Then, for a dyadic cube $Q = \prod_{j=1}^{n} I_j$ and a multi-index $i = (i_1, \ldots, i_n) \neq 0$, where each $i_j \in \{0,1\}$, we define
\[
h^i_Q = \prod_{j=1}^{n} h^{i_j}_{I_j}.
\]
For the Haar basis, the associated square function is defined as
\begin{equation}\label{defsquarfunc}
	S_d(f)(x) := \left( \sum_{Q \in \mathcal{D}} \sum_{i \neq 0} \left| \langle f, h^i_{Q} \rangle \right|^2 \frac{\chi_Q(x)}{|Q|} \right)^{\frac{1}{2}}, \quad x \in \rn,
\end{equation}
where $\langle \cdot, \cdot \rangle$ denotes the usual inner product in $L^2(\rn)$. Another important dyadic operator is the dyadic maximal operator, defined as
\begin{equation}\label{defdyadmax}
	M_d(f)(x) := \sup_{x \in Q} \left| \langle f \rangle_Q \right|,
\end{equation}
where the supremum is taken over all dyadic cubes $Q$ containing $x$.
\begin{definition}
	For $0 < p < \infty$, the dyadic Hardy space $H_d^p(\rn)$ is the completion of the space of locally integrable functions $f$ such that 
	\begin{equation*}
		\|f\|_{H_d^p(\rn)} := \|M_d(f)\|_{L^p(\rn)} < \infty.
	\end{equation*}
\end{definition}
When $1 \leq p < \infty$, the dyadic Hardy space is a Banach space with the above quantity as its norm. For $0 < p < 1$, however, this quantity is a quasi-norm, making $H_d^p(\mathbb{R}^n)$ a quasi-Banach space. It is also well known that for $1 < p < \infty$, $H_d^p(\mathbb{R}^n)$ is identical to $L^p(\mathbb{R}^n)$. There is also a closely related space that is not identical to $H_d^p(\mathbb{R}^n)$, even though it is denoted by the same notation in the literature. To define it properly, we make the following definition:

\begin{definition}
	A dyadic distribution $f$ is a family of complex numbers $\{f^i_Q : Q \in \mathcal{D}, i \ne 0\}$ and is formally written as
	\[
	f = \sum_{Q \in \mathcal{D}} \sum_{i \ne 0} \langle f, h^i_{Q} \rangle h^i_Q, \quad \langle f, h^i_{Q} \rangle := f^i_Q.
	\]
\end{definition}

\begin{definition}
	For $0 < p < \infty$, the dyadic Hardy space $\dot{H}_d^p(\rn)$ is the space of all dyadic distributions $f$ such that 
	\begin{equation*}
		\|f\|_{\dot{H}_d^p(\rn)} := \|S_d(f)\|_{L^p(\rn)} < \infty.
	\end{equation*}
\end{definition}
The important fact here is that $\dot{H}_d^p(\mathbb{R}^n)$ is the same as $H_d^p(\mathbb{R}^n)$ modulo constants, meaning that for $f\in\lonernloc$ we have
\begin{equation}\label{maxchardyadhp}
	\|f\|_{\dot{H}_d^p(\mathbb{R}^n)} \simeq_p \inf_{c \in \mathbb{C}} \|f - c\|_{H_d^p(\mathbb{R}^n)}.
\end{equation}
Now, we turn to the definition of the dual of Hardy spaces.

\begin{definition}
	For $0 \le \alpha < \infty$, the dyadic homogeneous Lipschitz space $\dot{\Lambda}_d^{\alpha}(\mathbb{R}^n)$ is the space of all locally integrable functions $f$, modulo constants, such that
	\begin{equation*}\label{defdyadlips}
		\|f\|_{\dot{\Lambda}_d^{\alpha}(\mathbb{R}^n)} := \sup_{Q \in \mathcal{D}} l(Q)^{-\alpha} \text{osc}_1(f, Q) < \infty.
	\end{equation*}
\end{definition}

In the above, $\dot{\Lambda}_d^{0}(\mathbb{R}^n)$ is identical with the dyadic $BMO$, denoted by $BMO_d(\mathbb{R}^n)$, or the space of functions with bounded mean oscillation on dyadic cubes. The first crucial fact about the above definition is that if, for a positive number $p$, we replace $\text{osc}_1(f, Q)$ with $\text{osc}_p(f, Q)$, we obtain nothing but the same space. More precisely, we have
\begin{equation}\label{johnnirenberg}
	\|f\|_{\dot{\Lambda}_d^{\alpha}(\mathbb{R}^n)} \simeq \sup_{Q \in \mathcal{D}} l(Q)^{-\alpha} \text{osc}_p(f, Q) < \infty, \quad \alpha \ge 0, \quad p > 0.
\end{equation}
Another important fact that we need later is that for $0 < p \le 1$, the space ${\dot{\Lambda}}_d^{\alpha}(\mathbb{R}^n)$ is the dual of $H_d^p(\mathbb{R}^n)$. Specifically,
\begin{equation}\label{hpdyadduality}
	{H_d^p(\mathbb{R}^n)}^* \cong \dot{\Lambda}_d^{n(\frac{1}{p} - 1)}(\mathbb{R}^n), \quad 0<p\le 1.
\end{equation}

See \cite{MR0372987,MR0440695,MR0448538} for the proof of these.
\subsection{Dyadic Paraproducts}
Now, we define and review the boundedness properties of dyadic paraproducts, which are the focus here.

\begin{definition}
	For a dyadic distribution $g$, the dyadic paraproduct operator with symbol $g$ is defined as
	\begin{equation}\label{defdyadpara}
		\pi_g(f) := \sum_{Q \in \mathcal{D}} \sum_{i \ne 0} \langle f \rangle_Q \langle g, h^i_{Q} \rangle h^i_Q, \quad f \in L^1_{\text{loc}}(\rn).
	\end{equation}
\end{definition}

In the above, $\pi_g(f)$ is a dyadic distribution, and when $f$ is a linear combination of finitely many Haar functions, it is a well-defined function. The next theorem contains boundedness properties of $\pi_g$ on dyadic Hardy spaces.

\begin{theorem*}[A]\label{theorema}
	For a dyadic distribution $g$ and real numbers $0 < p, q, r < \infty$, the following inequalities hold:
	\begin{align}
		&\|\pi_g(f)\|_{\dot{H}_d^q(\rn)} \lesssim \|g\|_{\dot{H}_d^r(\rn)} \|f\|_{H_d^p(\rn)}, \quad \frac{1}{q} = \frac{1}{p} + \frac{1}{r} \label{dyadhphq}\\
		&\|\pi_g(f)\|_{\dot{H}_d^{p^*}(\rn)} \lesssim \|g\|_{\dot{\Lambda}_d^{\alpha}(\rn)} \|f\|_{H_d^p(\rn)}, \quad \frac{1}{p^*} = \frac{1}{p} - \frac{\alpha}{n}, \quad 0 < \alpha p < n \label{dyadhpstarhp}\\
		&\|\pi_g(f)\|_{\dot{H}_d^{p}(\rn)} \lesssim \|g\|_{BMO_d(\rn)} \|f\|_{H_d^p(\rn)} \label{dyadhphp}
	\end{align}
\end{theorem*}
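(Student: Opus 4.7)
Proof plan: My starting point is the identity
\begin{equation*}
S_d(\pi_g f)(x)^2 \;=\; \sum_{Q\in\mathcal{D}}|\avr{f}{Q}|^2\Bigl(\sum_{i\ne 0}|\avr{g,h^i_Q}{}|^2\Bigr)\frac{\chi_Q(x)}{|Q|},
\end{equation*}
which is immediate from the definition of $\pi_g$ and the orthonormality of the Haar system. Since $|\avr{f}{Q}|\le M_d(f)(x)$ for every $x\in Q$, this yields the pointwise domination $S_d(\pi_g f)\le M_d(f)\cdot S_d(g)$. Inequality~\eqref{dyadhphq} then follows at once from H\"older with $\tfrac{1}{q}=\tfrac{1}{p}+\tfrac{1}{r}$, combined with the maximal characterization~\eqref{maxchardyadhp} on the $f$-factor and the definition $\|S_d g\|_{L^r}=\|g\|_{H_d^r}$ on the $g$-factor.

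For \eqref{dyadhpstarhp} and \eqref{dyadhphp}, $S_d g$ need not lie in any $L^r$, so I would instead work through the atomic decomposition of $H_d^p$. Let $a$ be a mean-zero function supported in a dyadic cube $Q_0$ with $\|a\|_{L^\infty}\le|Q_0|^{-1/p}$, i.e.\ an $H_d^p$-atom. Because $\avr{a}{Q}=0$ whenever $Q\supsetneq Q_0$ or $Q\cap Q_0=\emptyset$, only cubes $Q\subseteq Q_0$ contribute to $\pi_g a$, and in particular $\pi_g a$ is supported in $Q_0$. The family $\{h^i_Q:Q\subseteq Q_0,\,i\ne 0\}\cup\{|Q_0|^{-1/2}\chi_{Q_0}\}$ is an orthonormal basis for $L^2(Q_0)$, so Parseval gives
\begin{equation*}
\sum_{Q\subseteq Q_0}\sum_{i\ne 0}|\avr{g,h^i_Q}{}|^2 \;=\; \int_{Q_0}|g-\avr{g}{Q_0}|^2.
\end{equation*}
By the $p=2$ form of~\eqref{johnnirenberg}, the right-hand side is $\lesssim|Q_0|\,\|g\|_{BMO_d}^2$ for case~\eqref{dyadhphp} and $\lesssim|Q_0|^{1+2\alpha/n}\|g\|_{\dot\Lambda_d^\alpha}^2$ for case~\eqref{dyadhpstarhp}. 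Combining with $|\avr{a}{Q}|\le|Q_0|^{-1/p}$ produces
\begin{equation*}
\|S_d(\pi_g a)\|_{L^2}^2\;\le\;|Q_0|^{-2/p}\int_{Q_0}|g-\avr{g}{Q_0}|^2,
\end{equation*}
and since $S_d(\pi_g a)$ is supported in $Q_0$, H\"older's inequality passing from $L^2(Q_0)$ to $L^p(Q_0)$ or $L^{p^*}(Q_0)$ gives the uniform atomic estimates $\|\pi_g a\|_{H_d^p}\lesssim\|g\|_{BMO_d}$ and $\|\pi_g a\|_{H_d^{p^*}}\lesssim\|g\|_{\dot\Lambda_d^\alpha}$. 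A standard summation of the atomic decomposition, using $\sum|\lambda_j|^{p^*}\le\bigl(\sum|\lambda_j|^p\bigr)^{p^*/p}$ for $p\le p^*\le 1$ (and absolute summation when $p=1$), then delivers \eqref{dyadhpstarhp} and \eqref{dyadhphp} on this range.

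The principal obstacle is the regime where the target exponent exceeds~$2$, in which the $L^2(Q_0)\hookrightarrow L^{p^*}(Q_0)$ step on a single atom is no longer available. There I would abandon atoms and invoke Carleson embedding directly: the BMO (respectively Lipschitz) hypothesis says precisely that $\{\sum_i|\avr{g,h^i_Q}{}|^2\}_Q$ is a Carleson (respectively fractional $\alpha$-Carleson) sequence with mass controlled by $\|g\|_{BMO_d}^2$ (respectively $\|g\|_{\dot\Lambda_d^\alpha}^2$), and the corresponding Carleson embedding theorem applied to $\sum_Q|\avr{f}{Q}|^2 a_Q\chi_Q/|Q|$ yields the required $L^p$- or $L^{p^*}$-bound on $S_d(\pi_g f)$ directly. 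Splicing the atomic step with the Carleson-embedding step covers the full range of parameters in \eqref{dyadhpstarhp} and \eqref{dyadhphp}.
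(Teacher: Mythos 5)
Your treatment of \eqref{dyadhphq} is exactly the paper's: the pointwise bound $S_d(\pi_g f)\le M_d(f)\,S_d(g)$, H\"older, and the maximal characterization \eqref{maxchardyadhp}. Nothing to add there. For \eqref{dyadhpstarhp} and \eqref{dyadhphp} you take a different route from the paper, and the atomic half of it is sound: the observation that $\pi_g a$ lives on the supporting cube of the atom, the Parseval identity $\sum_{Q\subseteq Q_0}\sum_{i\ne0}|\avr{g,h^i_Q}{}|^2=\int_{Q_0}|g-\avr{g}{Q_0}|^2$, and the $L^2\hookrightarrow L^{p^*}(Q_0)$ step give the correct uniform atomic bounds whenever the target exponent is at most $2$ (modulo the usual caveat that uniform bounds on atoms must still be upgraded to boundedness on all of $H_d^p$, which for a coefficient-wise operator like $\pi_g$ can be done via finite atomic decompositions, but deserves a sentence).

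The genuine gap is at the other end of the range. For \eqref{dyadhpstarhp} with $p^*>2$ you invoke ``the corresponding Carleson embedding theorem'' for a fractional $\alpha$-Carleson sequence, but the statement you would need --- that $\sum_{Q\subseteq R}\sum_i|\avr{g,h^i_Q}{}|^2\lesssim|R|^{1+2\alpha/n}$ implies $\|(\sum_Q|\avr{f}{Q}|^2a_Q\chi_Q/|Q|)^{1/2}\|_{L^{p^*}}\lesssim\|f\|_{H_d^p}$ --- \emph{is} inequality \eqref{dyadhpstarhp} restated, so as written the argument is circular there. (Similarly, the $L^p$-version of Carleson embedding for $p\ne2$ that you need in the BMO case is true but is not the elementary $L^2$ statement; it requires a good-$\lambda$ or stopping-time argument of its own.) The paper closes exactly this hole for \eqref{dyadhpstarhp} with the dyadic Hedberg inequality \eqref{dyadhedberg}: playing the two competing bounds $|\avr{f}{Q}|\le\inf_QM_d(f)$ and $|\avr{f}{Q}|\lesssim|Q|^{-1/p}\|f\|_{H_d^p}$ (the latter from duality \eqref{hpdyadduality}) against each other yields the pointwise estimate $S_d(\pi_g f)\lesssim\|g\|_{\dot\Lambda_d^\alpha}\|f\|_{H_d^p}^{\alpha p/n}M_d(f)^{p/p^*}$, and taking $L^{p^*}$ norms then covers the entire range $0<\alpha p<n$ in one stroke, with no case split at $p^*=2$. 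Substituting that Proposition for your ``fractional Carleson embedding'' step would complete your argument; for \eqref{dyadhphp} with $p>2$ the paper's route ($L^2$ via Carleson embedding, then Calder\'on--Zygmund theory for $L^p$) is the standard way to supply what you are asserting.
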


Let us briefly discuss the reasons behind these inequalities. The first inequality \eqref{dyadhphq} follows from the pointwise bound
\[
S_d(\pi_g(f))(x) \le M_d(f)(x) S_d(g)(x),
\]
the Hölder inequality, and the maximal characterization of dyadic Hardy spaces \eqref{maxchardyadhp}. For inequality \eqref{dyadhpstarhp}, we have an analog of the classical Hedberg inequality, which we could not find in the literature, so we decided to prove it here.

\begin{proposition}
	For $0<\alpha n<p<\infty$, a dyadic distribution $g$, and a locally integrable function $f$, it holds
	\begin{equation}\label{dyadhedberg}
		S_d(\pi_g(f))(x)\lesssim\|g\|_{\dot{\Lambda}_d^{\alpha}(\rn)}\|f\|_{H_d^p(\rn)}^{\frac{\alpha p}{n}}M_d(f)(x)^{\frac{p}{p^*}}.
	\end{equation}
\end{proposition}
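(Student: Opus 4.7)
The plan is a dyadic Hedberg-type interpolation. Expanding the square function,
\[
S_d(\pi_g(f))(x)^2 = \sum_{\substack{Q \in \mathcal{D} \\ Q \ni x}} |\avr{f}{Q}|^2 \sum_{i \neq 0} \frac{|\avr{g, h^i_Q}{}|^2}{|Q|}.
\]
The scale-sensitive bound on the Haar coefficients of $g$ follows from $\int h^i_Q = 0$ and $\|h^i_Q\|_\infty = |Q|^{-1/2}$: writing $\avr{g, h^i_Q}{} = \int h^i_Q(g - \avr{g}{Q})$ and using \eqref{johnnirenberg} gives $|\avr{g, h^i_Q}{}| \le |Q|^{1/2}\,\text{osc}_1(g,Q) \lesssim \dhlipalpharn{g}\, l(Q)^\alpha |Q|^{1/2}$, so that $\sum_{i \neq 0} |\avr{g, h^i_Q}{}|^2/|Q| \lesssim \dhlipalpharn{g}^2 l(Q)^{2\alpha}$.

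Next, I would fix a dyadic scale $N$ and split the outer sum into cubes with $l(Q) \le N$ and $l(Q) > N$. For the local piece, use $|\avr{f}{Q}| \le M_d(f)(x)$; summing the geometric series $\sum_{Q \ni x,\, l(Q) \le N} l(Q)^{2\alpha} \lesssim N^{2\alpha}$ (valid because $\alpha > 0$) produces a contribution $\lesssim \dhlipalpharn{g}^2 M_d(f)(x)^2 N^{2\alpha}$. For the global piece, the maximal characterization \eqref{maxchardyadhp} gives $|Q|\,|\avr{f}{Q}|^p \le \int_Q M_d(f)^p \lesssim \dhprnn{f}^p$, hence $|\avr{f}{Q}| \lesssim \dhprnn{f}\, |Q|^{-1/p}$; the geometric series $\sum_{Q \ni x,\, l(Q) > N} l(Q)^{2\alpha - 2n/p} \lesssim N^{2\alpha - 2n/p}$ (convergent because $\alpha p < n$) then yields $\lesssim \dhlipalpharn{g}^2 \dhprnn{f}^2 N^{2\alpha - 2n/p}$.

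Combining,
\[
S_d(\pi_g f)(x)^2 \lesssim \dhlipalpharn{g}^2 \bigl(M_d(f)(x)^2 N^{2\alpha} + \dhprnn{f}^2 N^{2\alpha - 2n/p}\bigr).
\]
Choosing $N$ to be the dyadic scale closest to $\bigl(\dhprnn{f}/M_d(f)(x)\bigr)^{p/n}$ balances the two terms and produces $S_d(\pi_g f)(x) \lesssim \dhlipalpharn{g}\, \dhprnn{f}^{\alpha p/n}\, M_d(f)(x)^{1-\alpha p/n}$, which is the stated inequality since $p/p^* = 1 - \alpha p/n$. There is no substantial obstacle; the argument is bookkeeping once the Haar coefficient bound for $g$ is in hand. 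The only minor points are rounding $N$ to a dyadic scale (harmless, affecting only constants) and disposing of the trivial edge cases $M_d(f)(x) = 0$, $\dhprnn{f} = 0$, or $\dhlipalpharn{g} = \infty$.
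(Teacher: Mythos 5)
Your proof is correct, and the overall architecture is the same Hedberg-type interpolation as in the paper: the Haar coefficient bound $|\avr{g,h^i_Q}{}|\lesssim \dhlipalpharn{g}\,l(Q)^{\alpha}|Q|^{1/2}$, two competing bounds on $|\avr{f}{Q}|$, a geometric-series summation over the dyadic cubes containing $x$, and optimization of the cutoff scale (your choice of $N$ is exactly the paper's splitting point $|Q|^{-1/p}=M_d(f)(x)$ after normalization). The one step where you genuinely diverge is the decay bound $|\avr{f}{Q}|\lesssim \dhprnn{f}\,|Q|^{-1/p}$: the paper obtains it by pairing $f$ with $\chi_Q$ via the duality \eqref{hpdyadduality} and estimating $\|\chi_Q\|_{\dot{\Lambda}_d^{n(\frac{1}{p}-1)}(\rn)}$, whereas you get it from $|Q|\,|\avr{f}{Q}|^p\le\int_Q M_d(f)^p$ together with the maximal characterization \eqref{maxchardyadhp}. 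Your route is arguably cleaner and, unlike the duality argument as literally written (which requires $0<p\le 1$), applies uniformly for all $0<p<\infty$ without a case distinction. A cosmetic remark: the coefficient bound for $g$ needs only the definition of $\dhlipalpharn{g}$ via $\text{osc}_1$, not the John--Nirenberg equivalence \eqref{johnnirenberg}; and keeping the $\ell^2$ sum rather than passing to the $\ell^1$ sum as the paper does changes nothing, since both are geometric over the chain of cubes containing $x$.
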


\begin{proof}
	After normalization, we may assume $\|g\|_{\dot{\Lambda}_d^{\alpha}(\rn)} = \|f\|_{H_d^p(\rn)} = 1$. Let $Q$ be a dyadic cube. The cancellation property of Haar functions ($\int h_Q^i = 0$), and the triangle inequality imply
	\begin{equation*}
		|\avr{g,h_Q^i}{}| \le \avr{|g - \avr{g}{Q}|, |h_Q^i|}{} \le \text{osc}_1(g, Q) |Q|^{\frac{1}{2}} \le |Q|^{\frac{\alpha}{n} + \frac{1}{2}}.
	\end{equation*}
	For $\avr{f}{Q}$, we have the following two competing bounds:
	\begin{equation}\label{thefirstboundherd}
		|\avr{f}{Q}| \le \inf_{x \in Q} M_d(f)(x), \quad |\avr{f}{Q}| \lesssim |Q|^{-1} \|f\|_{H_d^p(\rn)} \|\chi_Q\|_{{H_d^p(\mathbb{R}^n)}^*},
	\end{equation}
where the right-hand side inequality follows from duality.
When $1<p<\infty$, it is clear that $\|\chi_Q\|_{{H_d^p(\mathbb{R}^n)}^*}\le |Q|^{1-\frac{1}{p}}$, and the duality relation \eqref{hpdyadduality} gives a similar bound holds for $0<p\le 1$. To see this, note that if $R \subseteq Q$ is a dyadic cube, then $\text{osc}_1(\chi_Q, R) = 0$. For $Q \subsetneq R$, we have
 \[
 \text{osc}_1(\chi_Q, R) \le 2 \avr{\chi_Q}{R} \le 2 |R|^{-1} |Q| \le 2 |R|^{\frac{1}{p}-1} |Q|^{1 - \frac{1}{p}},
 \]
 which implies 
 \begin{equation}\label{hedbergseconddyad}
 	\|\chi_Q\|_{\dot{\Lambda}_d^{n(\frac{1}{p}-1)}(\rn)} \le 2 |Q|^{1 - \frac{1}{p}}.
 \end{equation}
 Combining \eqref{thefirstboundherd} and \eqref{hedbergseconddyad}, we get
 \begin{equation*}
 	|\avr{f}{Q}| \lesssim \min\left\{\inf_{x \in Q} M_d(f)(x), |Q|^{-\frac{1}{p}}\right\}, \quad 0<p<\infty.
 \end{equation*}
 Finally, we estimate the square function as
 \begin{align*}
 	S_d(\pi_g(f))(x) &\le \sum_{Q \in \mathcal{D}} \sum_{i \neq 0} |\avr{f}{Q}| |\avr{g, h_Q^i}{}| |Q|^{-\frac{1}{2}} \chi_Q(x) \\
 	&\lesssim \sum_{Q \in \mathcal{D}} \min\left\{M_d(f)(x), |Q|^{-\frac{1}{p}}\right\} |Q|^{\frac{\alpha}{n}} \chi_Q(x) \\
 	&\le M_d(f)(x) \sum_{M_d(f)(x) \le |Q|^{-\frac{1}{p}}} |Q|^{\frac{\alpha}{n}} \chi_Q(x) + \sum_{M_d(f)(x) > |Q|^{-\frac{1}{p}}} |Q|^{\frac{\alpha}{n} - \frac{1}{p}} \chi_Q(x) \\
 	&\lesssim M_d(f)(x)^{\frac{p}{p^*}},
 \end{align*}
 which proves the claim.

\end{proof}

Now, taking the $L^{p^*}$ norm of \eqref{dyadhedberg} proves \eqref{dyadhpstarhp}. The last inequality \eqref{dyadhphp} is the most complex one, so we only outline the main steps to prove it. The $L^2$ boundedness of $\pi_g$ follows from the Haar characterization of $\bmodyadicrn$ in terms of Carleson measures and the Carleson embedding theorem. Boundedness on $L^p$ comes from Calderón-Zygmund theory, and boundedness on $H_d^p$ is derived from the atomic decomposition. We refer the reader to \cite{MR2157745} for more on dyadic Hardy spaces and a proof of this. See also \cite{MR2381883} for another proof of \eqref{dyadhphp}. We will now shift from the dyadic setting to the Fourier setting and provide the necessary definitions.

\subsection{Littlewood-Paley Operators}
We begin by fixing some notation. For a Schwartz function $f$, the Fourier transform is defined by
\[
\hat{f}(\xi):=\int_{\rn}f(x)e^{-2\pi i\left\langle x,\xi\right\rangle}dx,
\]
and the convolution of two functions is defined as
\[
f*g(x):=\int_{\rn}f(x-y)g(y)dy.
\]
Moreover, we use the following notation for translations and dilations of functions:
\[
(\tau^{x_0}f)(x):=f(x-x_0), \quad \delta^t(f)(x):=f(t^{-1}x), \quad f_t(x):=t^{-n}\delta^t(f)(x), \quad t>0, \quad x_0, x\in\rn,
\]
and below we summarize some of their useful properties:
\[
\tau^{x_0}(f*g)=(\tau^{x_0}f)*g=f*(\tau^{x_0}g), \quad f_t*\delta^s g=\delta^s(f_{s^{-1}t}g).
\]
Now, let $\psi$ be a Schwartz function with its Fourier transform supported in an annulus away from the origin and infinity, meaning
\begin{equation}\label{psiconditins}
	\text{supp}(\hat{\psi}) \subseteq \left\{\textbf{\textit{a}} \le |\xi| \le \textbf{\textit{b}}\right\}, \quad 0 < \textbf{\textit{a}} < \textbf{\textit{b}} < \infty,
\end{equation}
and such that
\begin{equation}\label{partionofunity}
	\sum_{j \in \zed} \hat{\psi}(2^{-j} \xi) = 1, \quad \xi \neq 0,
\end{equation}
(see \cite{MR2445437} for examples of such functions). Then, the Littlewood-Paley operators associated with $\psi$ are defined as
\[
\Delta_j^\psi(f) := \psi_{2^{-j}} * f, \quad j \in \zed,
\]
and the partial sum operators are defined as
\[
S_j^\psi(f) := \sum_{k \le j} \Delta_k^\psi(f), \quad S_j(f) = \Psi_{2^{-j}} * f,
\]
where
\[
\hat{\Psi} :=
\begin{cases*}
	\sum_{k \le 0} \hat{\psi}(2^k \xi) & $\xi \neq 0$ \\
	1 & $\xi = 0$
\end{cases*}.
\]
In the above, $\Psi$ is a Schwartz function whose Fourier transform is supported in a ball around the origin and equals to $1$ in a smaller neighborhood (throughout the paper, we use capital Greek letters for the kernel functions of the partial sum operators). Similarly to the dyadic case, the square function with respect to $\psi$ is defined as
\[
S_\psi(f)(x) := \left(\sum_{j \in \zed} |\Delta_j^\psi(f)(x)|^2\right)^{\frac{1}{2}}.
\]
For simplicity, we drop the dependence on $\psi$ when it is clear from the context. An important feature of the Littlewood-Paley pieces $\Delta_j(f)$ is that their Fourier transform is localized at the scale $2^j$, which means they behave like a constant at scale $2^{-j}$. This feature can be expressed through Plancherel-Polya-Nikolskij type inequalities.

\begin{theorem}\label{polyainequality}
	Let $f$ be a tempered distribution whose Fourier transform is supported in a ball of radius $t > 0$. Then
	\begin{itemize}
		\item For $0 < p \le q \le \infty$, we have
		\begin{equation}\label{polyainequalitylplq}
			\lqrnn{f} \lesssim t^{n\left(\frac{1}{p} - \frac{1}{q}\right)} \lprnn{f}.
		\end{equation}
		\item There exists a constant $c$ such that for $0 < h \le c t^{-1}$, and any sequence of numbers $\left\{x_k: x_k \in h([0,1]^n + k), \; k \in \zn \right\}$, we have
		\begin{equation}\label{polyalittlelp}
			\|\{f(x_k)\}_{k \in \zn}\|_{l^p(\zn)} \simeq h^{-\frac{n}{p}} \lprnn{f}, \quad 0 < p \le \infty.
		\end{equation}
	\end{itemize}	

\end{theorem}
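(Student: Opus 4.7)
The plan is as follows. Fix a Schwartz function $\eta$ with $\hat{\eta}$ supported in $B_2(0)$ and equal to $1$ on $B_1(0)$, and set $\eta_t(x):=t^n\eta(tx)$. Since $\text{supp}(\hat{f})\subset B_t(0)$, we have the reproducing formula $f=f\ast\eta_t$, which will drive the whole argument.

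For Part (1), when $1\le p\le q\le\infty$ I would apply Young's convolution inequality with $1+1/q=1/p+1/r$, compute $\|\eta_t\|_{L^r(\rn)}=t^{n(1-1/r)}\|\eta\|_{L^r(\rn)}\simeq t^{n(1/p-1/q)}$ by a change of variables, and conclude \eqref{polyainequalitylplq} at once. When $0<p<1$ I would first reduce to the case $q=\infty$: differentiating the reproducing formula yields Bernstein's inequality $\|\nabla f\|_{L^\infty}\le\|\nabla\eta_t\|_{L^1}\|f\|_{L^\infty}\lesssim t\|f\|_{L^\infty}$, and at a near-maximizer $x_0$ of $|f|$ (after a truncation to guarantee $\|f\|_{L^\infty}<\infty$) Bernstein forces $|f(y)|\ge\tfrac14\|f\|_{L^\infty}$ on a ball $B(x_0,c/t)$; integrating $|f|^p$ over that ball yields $\|f\|_{L^\infty}\lesssim t^{n/p}\lprnn{f}$. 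For general $p<q<\infty$ I would then interpolate trivially via $\lqrnn{f}^q\le\|f\|_{L^\infty}^{q-p}\lprnn{f}^p\lesssim t^{n(q-p)/p}\lprnn{f}^q$.

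For the upper bound in Part (2), repeating the above mean-value argument around an arbitrary point gives a \emph{local} sub-mean value inequality $|f(x)|^p\lesssim t^n\int_{B(x,C/t)}|f|^p$. Evaluating at $x=x_k$ and summing, the balls $B(x_k,C/t)$ overlap at each point at most $\lesssim(th)^{-n}$ times (there are at most that many $x_k$ in any ball of radius $C/t$, since they sit one per cube of side $h$), so $\sum_k|f(x_k)|^p\lesssim t^n(th)^{-n}\lprnn{f}^p=h^{-n}\lprnn{f}^p$.

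For the lower bound I would partition $\rn$ into the cubes $Q_k=h([0,1]^n+k)$ and compare $|f(y)|$ with $|f(x_k)|$ on each $Q_k$ using the Peetre maximal function $f^{\ast}_{r,t}(x):=\sup_{z\in\rn}|f(x-z)|/(1+t|z|)^{n/r}$. Bernstein combined with Peetre gives $|f(y)-f(x_k)|\lesssim t|y-x_k|\,f^{\ast}_{r,t}(x_k)\lesssim c\,f^{\ast}_{r,t}(x_k)$ on $Q_k$, since $|y-x_k|\le\sqrt{n}\,h\le\sqrt{n}\,c/t$; raising to the $p$-th power, integrating over $Q_k$, and summing produces $\lprnn{f}^p\lesssim h^n\sum_k|f(x_k)|^p+c^p h^n\sum_k(f^{\ast}_{r,t}(x_k))^p$. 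Picking $0<r<p$, Peetre's inequality $f^{\ast}_{r,t}\lesssim(M|f|^r)^{1/r}$ together with the $L^{p/r}$-boundedness of the Hardy--Littlewood maximal function $M$ gives $\|f^{\ast}_{r,t}\|_{L^p}\lesssim\lprnn{f}$; because $f^{\ast}_{r,t}$ is essentially constant on scale $1/t$, a Riemann-sum comparison yields $h^n\sum_k(f^{\ast}_{r,t}(x_k))^p\lesssim\|f^{\ast}_{r,t}\|_{L^p}^p\lesssim\lprnn{f}^p$, and absorbing the resulting $C c^p\lprnn{f}^p$ term on the left for $c$ small closes the estimate. The main obstacle is precisely this absorption step: Bernstein by itself controls oscillations only via the global $\|f\|_{L^\infty}$, and it is the Peetre maximal function that supplies the correct local-to-global bridge linking pointwise oscillation of $f$ on scale $h$ to the $L^p$ norm in a summable way.
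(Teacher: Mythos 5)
The paper itself gives no proof of this theorem --- it is quoted from \cite{MR0781540} --- so your argument can only be judged on its own terms. Part (1) and the lower bound in Part (2) are sound and follow the standard route: the reproducing formula $f=f*\eta_t$ plus Young's inequality for $p\ge1$; the near-maximizer/Bernstein argument for the $L^p\to L^\infty$ bound when $p<1$ (the truncation you allude to, multiplying by a band-limited decaying factor and passing to the limit, is the standard way to secure $\linftyrnn{f}<\infty$ a priori); and the Peetre maximal function with an absorption for the discretization lower bound. The only unaddressed technicality there is that the absorption presupposes $\lprnn{f}<\infty$, which again needs a limiting argument when that is not known in advance.

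The genuine gap is in the upper bound of Part (2). The inequality $|f(x)|^p\lesssim t^n\int_{B(x,C/t)}|f|^p$ does \emph{not} follow from ``repeating the above mean-value argument around an arbitrary point.'' That argument works only at a near-maximizer $x_0$ of $|f|$: Bernstein gives $|\nabla f|\le Ct\linftyrnn{f}$, so the oscillation $Ct\linftyrnn{f}\,|y-x_0|$ is small relative to $|f(x_0)|\simeq\linftyrnn{f}$ on a ball of radius $\simeq 1/t$. At a generic point with $|f(x)|\ll\linftyrnn{f}$ the same bound only controls the oscillation on a ball of radius $\simeq |f(x)|/(t\linftyrnn{f})$, and the constant in your ``local sub-mean value inequality'' degenerates as $|f(x)|/\linftyrnn{f}\to0$; you identify precisely this defect of Bernstein in your closing remarks, but only in connection with the lower bound. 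The correct pointwise substitute carries a tail, $|f(x)|^p\lesssim t^n\int_{\rn}|f(y)|^p(1+t|x-y|)^{-N}\,dy$, and summing it over $k$ still gives $\sum_k|f(x_k)|^p\lesssim h^{-n}\lprnn{f}^p$ because $\sum_k(1+t|x_k-y|)^{-N}\lesssim (th)^{-n}$ for $N>n$. Even more economically, the Peetre machinery you already deploy for the lower bound settles the upper bound in one line: for $y\in Q_k$ one has $|f(x_k)|\le(1+t|x_k-y|)^{n/r}f^{*}_{r,t}(y)\lesssim f^{*}_{r,t}(y)$ since $th\le c$, hence $h^n\sum_k|f(x_k)|^p\lesssim\|f^{*}_{r,t}\|_{L^p}^p\lesssim\lprnn{f}^p$. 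As written, however, the upper-bound step rests on an unjustified local inequality whose purported proof does not go through.
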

See \cite{MR0781540} for the proof.
\subsection{Real Hardy Spaces} Now, we recall the definition of the real Hardy spaces.\\
For a Schwartz function $\varphi$, the associated maximal operator $M_\varphi$ and the non-tangential maximal operator $M_{\varphi}^*$ are defined as
\[
M_{\varphi}(f)(x) := \sup_{t > 0} |\varphi_t * f(x)|, \quad M_{\varphi}^*(f)(x) := \sup_{t > 0} \sup_{|x - y| \le t} |\varphi_t * f(y)|.
\]

\begin{definition}
	For a Schwartz function $\varphi$ with $\int \varphi = 1$ and $0 < p < \infty$, the Hardy space $H^p(\rn)$ is the space of all tempered distributions $f$ such that
	\[
	\hprnn{f} := \lprnn{M_{\varphi}(f)} < \infty.
	\]
\end{definition}
The above quantity defines a quasi-norm when $0 < p < 1$ and a norm when $1 \le p < \infty$, which makes $\hprn$ into a (quasi) Banach space. The space $\hprn$ is independent of $\varphi$, and for any other choice of $\varphi$, the corresponding (quasi) norms are comparable to each other. Also, for any Schwartz function $\tilde{\varphi}$, we have
\[
\lprnn{M_{\tilde{\varphi}}^*(f)} \lesssim \hprnn{f}, \quad 0 < p < \infty,
\]
and if $\int \tilde{\varphi} = 1$, the above inequality becomes an equivalence, with bounds depending only on $p$, $n$, and finitely many Schwartz semi-norms of $\varphi$ and $\tilde{\varphi}$. Furthermore, with a minor difference from the dyadic case, the square function characterization holds as well. To be more precise, let $\dot{H}^p(\rn)$ be the Hardy space $\hprn$ modulo polynomials, or equivalently, the space of all tempered distributions with
\[
\|f\|_{\dot{H}^p(\rn)} := \inf_{P \in \mathbb{P}_n} \hprnn{f - P} < \infty,
\]
where $\mathbb{P}_n$ is the space of all polynomials on $\rn$. Then, for any choice of Littlewood-Paley operators $\{\Delta_j\}_{j \in \zed}$, we have
\[
\lprnn{S(f)} \simeq \|f\|_{\dot{H}^p(\rn)}, \quad 0 < p < \infty.
\]
Finally, just as in the dyadic case, for $1 < p < \infty$, $\hprn$ coincides with ${L}^p(\rn)$ \cite{MR0447953, MR1232192, MR2463316}. Next, we recall the duals of Hardy spaces.

\begin{definition}
	For $0 < \alpha < \infty$, the homogeneous Lipschitz space $\hlip{\alpha}{n}$ is the Banach space of all functions $f$ with
	\[
	\hlipn{f}{\alpha}{n} := \sup_{x \in \rn} \sup_{h \ne 0} |h|^{-\alpha} |D_h^{\left[\alpha\right]+1}(f)(x)| < \infty,
	\]
	where $D_h$ is the forward difference operator defined as $D_h(f)(x) = f(x + h) - f(x)$, and $[\alpha]$ denotes the largest integer not greater than $\alpha$.
\end{definition}

Modulo polynomials, the homogeneous Lipschitz space $\hlip{\alpha}{n}$ can be characterized in terms of Littlewood-Paley pieces. Specifically, let $\ddot{\Lambda}^\alpha(\rn)$ be the space of all tempered distributions $f$ with 
\[
\|f\|_{\ddot{\Lambda}^\alpha(\rn)} := \inf_{P \in \mathbb{P}_n} \hlipn{f - P}{\alpha}{n} < \infty.
\]
Then, for any choice of Littlewood-Paley operators, we have
\[
\|f\|_{\ddot{\Lambda}^\alpha(\rn)} \simeq \sup_{j \in \zed} 2^{\alpha j} \linftyrnn{\Delta_j(f)}.
\]
Similarly to the dyadic case, we have
\[
\hprn^* \cong \hlip{n\left(\frac{1}{p} - 1\right)}{n}, \quad 0 < p < 1,  \quad \honern^* \cong \bmorn,
\]
where $\bmorn$ is the space of functions of bounded mean oscillation equipped with the norm
\[
\bmornn{f} := \sup_{Q} \text{osc}_1(f, Q),
\]
where the $\sup$ is taken over all cubes in $\rn$. It is well-known that for any positive $p$, if we replace $\text{osc}_1$ with $\text{osc}_p$, we get an equivalent norm \cite{MR0807149, MR1232192, MR2463316}. It is also useful to define $\dot{BMO}(\rn)$, or $\bmorn$ modulo polynomials, with the usual definition of the norm
\[
\|f\|_{\dot{BMO}(\rn)} := \inf_{P \in \mathbb{P}_n} \bmornn{f - P}.
\]

\subsection{Fourier Paraproducts}
Finally, we discuss the type of paraproducts mentioned in the introduction.

\begin{definition}
	Let $\psi$ be a Schwartz function satisfying \eqref{psiconditins}. For a tempered distribution $g$ and a Schwartz function $\varphi$, the paraproduct operator $\Pi_{g, \varphi}$ with symbol $g$ is formally defined as
	\[
	\Pi_{g, \varphi}(f) := \sum_{j \in \zed} (\varphi_{2^{-j}} * f)  \Delta_j^\psi(g).
	\]
\end{definition}

The meaning of convergence in the above sum is not clear unless we impose some restrictions on $f$ and $\varphi$. One situation where the above operator is well-defined is when the support of the Fourier transform of $\varphi$ lies in a compact subset of $\rn$. In this case, for a Schwartz function $f$ with Fourier transform supported in an annulus, the sum is finite and hence yields a well-defined smooth function. Furthermore, when the support of $\hat{\varphi}$ lies in a ball strictly within the annulus containing the support of $\hat{\psi}$, the boundedness of this operator on various Hardy spaces is well-known and we present it here as a theorem \cite{MR1853518, MR2463316, MR1232192}.

 \begin{theorem*}[B]\label{theoremb}
 	Suppose $\varphi$ is a Schwartz function whose Fourier transform is supported in a ball around the origin with radius $\textbf{a}' < \textbf{a}$, where $\textbf{a}$ is as in \eqref{psiconditins}. Then, for a distribution $g$ and real numbers $0 < p, q, r < \infty$, the following inequalities hold:
 	\begin{align}
 		&\|\Pi_{g, \varphi}(f)\|_{\dot{H}^q(\rn)} \lesssim \|g\|_{\dot{H}^r(\rn)} \|f\|_{H^p(\rn)}, \quad \frac{1}{q} = \frac{1}{p} + \frac{1}{r}, \label{hphq} \\
 		&\|\Pi_{g, \varphi}(f)\|_{\dot{H}^{p^*}(\rn)} \lesssim \|g\|_{\ddot{\Lambda}^\alpha(\rn)} \|f\|_{H^p(\rn)}, \quad \frac{1}{p^*} = \frac{1}{p} - \frac{\alpha}{n}, \quad 0 < \alpha p < n, \label{hpstarhp} \\
 		&\|\Pi_{g, \varphi}(f)\|_{\dot{H}^p(\rn)} \lesssim \|g\|_{\dot{BMO}(\rn)} \|f\|_{H^p(\rn)}. \label{hphp}
 	\end{align}
 \end{theorem*}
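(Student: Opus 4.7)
The plan is to mimic the dyadic proof of Theorem~A, with Fourier localization playing the role that Haar orthogonality plays there. In the decomposition
\[
\Pi_{g,\varphi}(f)\;=\;\sum_{j\in\zed}h_j,\qquad h_j\,:=\,\varphi_{2^{-j}}(f)\,\Delta_j^\psi(g),
\]
each summand is already frequency-localized: from $\widehat{h_j}=(\hat\varphi(2^{-j}\cdot)\hat f)\ast(\hat\psi(2^{-j}\cdot)\hat g)$ and the hypothesis $\textbf{a}'<\textbf{a}$, the Fourier transform of $h_j$ is supported in the annulus $\{(\textbf{a}-\textbf{a}')2^j\le|\xi|\le(\textbf{b}+\textbf{a}')2^j\}$. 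The standard Littlewood--Paley characterization of $\dot H^q(\rn)$ by such frequency-localized decompositions (proved via the Peetre maximal function and the Fefferman--Stein vector-valued maximal inequality) yields
\[
\|\Pi_{g,\varphi}(f)\|_{\dot H^q(\rn)}\;\lesssim\;\Bigl\|\,\bigl(\textstyle\sum_j|h_j|^2\bigr)^{1/2}\,\Bigr\|_{L^q(\rn)},\qquad 0<q<\infty,
\]
and since $|\varphi_{2^{-j}}f(x)|\le M_\varphi^{*}f(x)$ uniformly in $j$, we get the continuous analogue of the dyadic master bound $S_d(\pi_g f)\le M_d f\cdot S_d g$:
\[
\bigl(\textstyle\sum_j|h_j(x)|^2\bigr)^{1/2}\;\le\;M_\varphi^{*}f(x)\,S_\psi g(x).
\]

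For \eqref{hphq}, H\"older's inequality applied to this master bound together with $\|M_\varphi^{*}f\|_{L^p(\rn)}\simeq\|f\|_{H^p(\rn)}$ and $\|S_\psi g\|_{L^r(\rn)}\simeq\|g\|_{\dot H^r(\rn)}$ finishes the argument. For \eqref{hpstarhp}, I would run the continuous analogue of the dyadic Hedberg inequality \eqref{dyadhedberg}: combine $|\Delta_j^\psi g(x)|\lesssim 2^{-\alpha j}\|g\|_{\ddot\Lambda^\alpha(\rn)}$ with the two competing estimates
\[
|\varphi_{2^{-j}}f(x)|\le M_\varphi^{*}f(x),\qquad|\varphi_{2^{-j}}f(x)|\lesssim 2^{jn/p}\|f\|_{H^p(\rn)},
\]
the second of which follows from \eqref{polyainequalitylplq} applied to $\varphi_{2^{-j}}f$ (whose Fourier transform is supported in a ball of radius $\textbf{a}'2^j$), together with $\|\varphi_{2^{-j}}f\|_{L^p(\rn)}\lesssim\|f\|_{H^p(\rn)}$. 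Splitting the sum over $j$ at the scale where the two bounds coincide yields
\[
\bigl(\textstyle\sum_j|h_j(x)|^2\bigr)^{1/2}\;\lesssim\;\|g\|_{\ddot\Lambda^\alpha(\rn)}\,\|f\|_{H^p(\rn)}^{\alpha p/n}\,M_\varphi^{*}f(x)^{p/p^{*}},
\]
and taking $L^{p^{*}}$ norms gives \eqref{hpstarhp}.

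The endpoint \eqref{hphp} escapes this pointwise strategy, since $\dot{BMO}$ gives no $L^\infty$ control on $\Delta_j^\psi g$, and here I would follow the classical three-step route already sketched after Theorem~A. First, for the $L^2\to L^2$ bound: the measure $\sum_j|\Delta_j^\psi g(x)|^2\,\delta_{t=2^{-j}}\,dx\,dt$ is Carleson of mass $\lesssim\|g\|_{\dot{BMO}(\rn)}^2$ (a classical characterization of $\dot{BMO}$), and Carleson's embedding theorem applied to this measure bounds $\|(\sum_j|h_j|^2)^{1/2}\|_{L^2(\rn)}$ by $\|g\|_{\dot{BMO}(\rn)}\|f\|_{L^2(\rn)}$, which via the master estimate yields $\|\Pi_{g,\varphi}f\|_{L^2(\rn)}\lesssim\|g\|_{\dot{BMO}(\rn)}\|f\|_{L^2(\rn)}$. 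Second, $L^p\to L^p$ for $1<p<\infty$ follows by verifying that $\Pi_{g,\varphi}$ has a Calder\'on--Zygmund kernel, the size and smoothness bounds being extracted from $\sum_j\varphi_{2^{-j}}(x-y)\Delta_j^\psi g(x)$ using the moment cancellation of $\psi$. Finally, $\dot H^p\to\dot H^p$ for $0<p\le 1$ is obtained via the atomic decomposition, by checking $\|\Pi_{g,\varphi}(a)\|_{\dot H^p(\rn)}\lesssim 1$ uniformly for $p$-atoms $a$ using the local $L^2$ bound inside a dilate of the supporting ball and the kernel decay outside. The main obstacle is this last step: the atomic verification requires kernel regularity information about $\Pi_{g,\varphi}$ that is not transparent from the frequency-side definition and must be extracted by careful integration by parts together with the moment cancellation of $\psi$, which is why the references already cited for this inequality rely on rather delicate machinery to close the $\dot{BMO}$ case.
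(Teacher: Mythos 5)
Your proof is correct and, except for one technical choice, follows the same route the paper itself indicates (the paper does not prove Theorem~B in detail; it reduces it to the dyadic scheme of Theorem~A and cites the literature). The one place you diverge is in how the overlap of the Fourier supports of consecutive terms $h_j=(\varphi_{2^{-j}}*f)\Delta_j^\psi(g)$ is absorbed: the paper splits $\Pi_{g,\varphi}=\sum_{0\le i<m}\Pi_{i,g,\varphi}$ over arithmetic progressions $j\in m\zed+i$ so that each sub-series becomes an exact Littlewood--Paley decomposition of its sum and the dyadic argument applies verbatim, whereas you invoke in one shot the synthesis estimate $\|\sum_j h_j\|_{\dot H^q(\rn)}\lesssim\|(\sum_j|h_j|^2)^{1/2}\|_{L^q(\rn)}$, valid for all $0<q<\infty$ whenever the $\widehat{h_j}$ are supported in annuli $\{c_12^j\le|\xi|\le c_22^j\}$ (Peetre maximal functions plus the Fefferman--Stein vector-valued maximal inequality). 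Both are standard; your version saves the bookkeeping of $m$ sub-operators at the price of quoting a heavier black box, and it is essential for it that $\textbf{a}'<\textbf{a}$ keeps the support of $\widehat{h_j}$ away from the origin, since for ball-supported pieces the synthesis estimate fails for small $q$. After this reduction, your three arguments --- H\"older applied to the pointwise bound $M_\varphi^*f\cdot S_\psi g$ for \eqref{hphq}, the continuous Hedberg inequality (whose exponent count you carry out correctly) for \eqref{hpstarhp}, and the Carleson-embedding/Calder\'on--Zygmund/atomic route for \eqref{hphp} --- are exactly the ones the paper describes for the dyadic Theorem~A and asserts carry over; your candid remark that the atomic verification in the $\dot{BMO}$ case is the genuinely delicate step is accurate and is precisely why the paper defers that inequality to the cited references.
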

 
 The reasons for the validity of these inequalities are similar to those in the dyadic case, although there is a minor difference. The source of this difference is that in the definition of $\Pi_{g, \varphi}$, there is always some overlap between the Fourier supports of consecutive terms. Consequently, we cannot guarantee that a term like $(\varphi_{2^{-j}}*f) \Delta_j(g)$ is a Littlewood-Paley piece of $\Pi_{g, \varphi}$. However, since the Fourier support of the product of two functions is contained within the algebraic sum of their Fourier supports, for $\textbf{\textit{a}}' < \textbf{\textit{a}}$, the Fourier support of $(\varphi_{2^{-j}}*f) \Delta_j(g)$ remains away from the origin and around the annulus where the Fourier transform of $\Delta_j(g)$ is supported. Therefore, for a sufficiently large natural number $m$ depending only on $\textbf{\textit{a}}'$, $\textbf{\textit{a}}$, and $\textbf{\textit{b}}$, the Fourier supports of the terms in
 \begin{equation}\label{pigdefinition}
 	\Pi_{i, g, \varphi}(f) := \sum_{j \in m\zed + i} (\varphi_{2^{-j}}*f) \Delta_j(g), \quad 0 \le i < m
 \end{equation}
 are all sufficiently far from each other. Thus, by choosing an appropriate Littlewood-Paley operator $\{\Delta^\theta_j\}_{j \in \zed}$ such that $\hat{\theta}$ equals $1$ in a neighborhood of the support of $\hat{\psi}$, we have
 \[
 \Delta^\theta_k(\varphi_{2^{-j}}*f \Delta_j(g)) = \delta_{k, j} \varphi_{2^{-j}}*f \Delta_j(g), \quad k \in \zed, \quad j \in m\zed + i.
 \]
 
 This implies that 
 \[
 S_\theta(\Pi_{i, g, \varphi})(f) = \left(\sum_{j \in m\zed + i} |\varphi_{2^{-j}}*f \Delta_j(g)|^2 \right)^{\frac{1}{2}}, \quad 0 \le i < m.
 \]
 Arguments similar to those used in the dyadic case can be applied to the operators $\Pi_{i, g, \varphi}$. Since 
 \[
 \Pi_{g, \varphi} = \sum_{0 \le i < m} \Pi_{i, g, \varphi},
 \]
 we conclude that the same results hold for $\Pi_{g, \varphi}$ \cite{MR2463316}. Having established our notation, provided the necessary definitions, and recalled the essential facts, we now proceed to the next section of this article.

\section{The operator norm of Dyadic paraproducts on dyadic hardy spaces}

Our main results in this section are as follows:
\begin{theorem}\label{dyadictheorem}
	Let $g$ be a dyadic distribution. Then we have 
	\begin{align}
		&\tag{i}\|\pi_g\|_{H_d^p(\rn) \to \dot{H}_d^q(\rn)} \simeq \|g\|_{\dot{H}_d^r(\rn)}, \quad \frac{1}{q} = \frac{1}{p} + \frac{1}{r}, \quad 0 < p, q, r < \infty, \label{thefirstcasdaydic} \\
		&\tag{ii}\|\pi_g\|_{H_d^p(\rn) \to \dot{H}_d^{p^*}(\rn)} \simeq \dhlipalpharn{g}, \quad \frac{1}{p^*} = \frac{1}{p} - \frac{\alpha}{n}, \quad 0 \le \alpha p < n, \quad 0 < p < \infty. \label{thesecondcase dyadic}
	\end{align}
\end{theorem}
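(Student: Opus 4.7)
The upper bounds in (i) and (ii) follow directly from Theorem A. For the lower bounds I would split by the sign of $p-q$: when $p\le q$ (part (ii), where $p\le p^*$), the plan is to test $\pi_g$ on characteristic-function-type inputs and extract a mean oscillation of $g$, concluding via the John--Nirenberg-type equivalence \eqref{johnnirenberg}; when $q<p$ (part (i)), duality on $H_d^q$ is unavailable for $q\le 1$, so the plan is instead to produce a pointwise sparse domination of $S_d(g)$ from the general machinery of Section~2.2 and then bound the resulting sparse sum by the operator norm via a randomized test.

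For (ii), fix a dyadic cube $Q$. Since $\avr{\chi_Q}{R}=1$ for all $R\in\mathcal{D}(Q)$, one has $\avr{\pi_g(\chi_Q),h^i_R}{}=\avr{g,h^i_R}{}$ for every such $R$ and $i\ne 0$. Setting $P_Qg:=\sum_{R\in\mathcal{D}(Q),\,i\ne 0}\avr{g,h^i_R}{}h^i_R$, one has $P_Qg=g-\avr{g}{Q}$ on $Q$ and $P_Qg=0$ elsewhere; dyadic differentiation gives $|g-\avr{g}{Q}|\le M_d(P_Qg)$ a.e.\ on $Q$, and \eqref{maxchardyadhp} then yields
\[
\|(g-\avr{g}{Q})\chi_Q\|_{L^{p^*}}\lesssim \|P_Qg\|_{H_d^{p^*}}\le \|\pi_g(\chi_Q)\|_{H_d^{p^*}}\le \|\pi_g\|_{H_d^p\to H_d^{p^*}}\,\|\chi_Q\|_{H_d^p}.
\]
A direct Haar-coefficient computation gives $\|\chi_Q\|_{H_d^p}\sim|Q|^{1/p}$ when $p>1$; when $p\le 1$, $\chi_Q\notin H_d^p$ and I would test instead with $\chi_Q-\chi_{Q'}$ for a dyadic sibling $Q'$ of $Q$, which has mean zero, has its Haar expansion concentrated at the single cube $\hat Q$, and has $H_d^p$-norm $\sim|Q|^{1/p}$ for every $p>0$. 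Dividing by $|Q|^{1/p^*}$ gives $l(Q)^{-\alpha}\text{osc}_{p^*}(g,Q)\lesssim \|\pi_g\|_{H_d^p\to H_d^{p^*}}$ uniformly in $Q$, and \eqref{johnnirenberg} upgrades this to the full $\dot{\Lambda}_d^\alpha$ bound.

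For (i) I would apply the sparse domination theorem of Section~2.2 with $f_Q:=S_d^Q(g)$ (extended by zero off $Q$, where $S_d^Q$ is the Haar square function restricted to $R\in\mathcal{D}(Q)$) and with differences $f_{P,Q}|_P:=\sqrt{f_Q^2-f_P^2}$, $f_{P,Q}|_{Q\setminus P}:=f_Q$. Since every Haar function $h^i_R$ with $P\subsetneq R\subseteq Q$ is constant on $P$, the quantity $f_{P,Q}^2|_P$ is a constant on $P$, so $\text{osc}(f_{P,Q},P)=0$ and consequently $m_Q^{\#}f\equiv 0$. The theorem therefore delivers $S_d(g)\lesssim \sum_{Q\in\mathcal{C}}\gamma_Q\chi_Q$ with $\gamma_Q=(S_d^Qg\cdot\chi_Q)^*(\epsilon|Q|)$, and Lemma~\ref{lemmasparse} reduces (i) to proving the sparse estimate $\bigl(\sum_{Q\in\mathcal{C}}\gamma_Q^r|E_Q|\bigr)^{1/r}\lesssim \|\pi_g\|_{H_d^p\to H_d^q}$.

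This last sparse estimate is the main obstacle. A naive individual-cube test of $\pi_g$ on $\chi_Q$ yields only $\gamma_Q\lesssim \|\pi_g\|_{H_d^p\to H_d^q}|Q|^{-1/r}$, which is not summable once $\mathcal{C}$ is infinite. The remedy is a simultaneous randomized test: for $r>1$ one picks coefficients $(\mu_Q)$ dual to $(\gamma_Q)$ in the weighted $\ell^{r'}$-pairing and tests $\pi_g$ on $f=\sum_{Q\in\mathcal{C}}\epsilon_Q\mu_Q\chi_Q$ (replacing $\chi_Q$ by its sibling variant when $p\le 1$) with i.i.d.\ $\pm 1$ signs $\epsilon_Q$. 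Kahane's inequality applied to the $\ell^2$-valued Haar expansion of $S_d(\pi_g f)$ converts the randomized operator bound $\|\pi_g f\|_{H_d^q}\le \|\pi_g\|_{H_d^p\to H_d^q}\|f\|_{H_d^p}$ into the required estimate after exploiting the disjointness of the $E_Q$. The case $r\le 1$, where $\ell^{r'}$ duality collapses, is the most delicate; there one would choose the $\mu_Q$ directly from the sparse structure and invoke Lemma~\ref{lemmasparse} once more.
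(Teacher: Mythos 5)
Your argument for part (ii) is essentially the paper's: the paper tests on $f=|R|^{1/2}h^i_{\hat R}$ (which equals $1$ on $R$), identifies $g_1=(g-\avr{g}{R})\chi_R$ inside $\pi_g(f)$, passes through $|g_1|\le M_d(g_1)$ and \eqref{maxchardyadhp}, and concludes via \eqref{johnnirenberg}; your sibling-difference test function is the same object in different clothing, and the only thing you omit is the limiting argument needed when $g$ does not have a finite Haar expansion.

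For part (i) the setup is right but the closing step has a genuine gap. Your sparse domination of $S_d(g)$ is exactly Lemma \ref{sparslemma} (the paper runs it on $S_d(g)^2$ after reducing to Theorem \ref{mainthereom} with $s=2$, which is only cosmetic). The problem is the sentence ``converts the randomized operator bound into the required estimate after exploiting the disjointness of the $E_Q$.'' The rearrangement value $\gamma_Q=(S_d^Qg\,\chi_Q)^*(\epsilon|Q|)$ only tells you that $S_d^Q(g)\ge\gamma_Q/2$ on \emph{some} subset of $Q$ of measure $\gtrsim|Q|$ --- this is property \eqref{sparsedomination2} --- and that subset has no relation to $E_Q$; on $E_Q$ itself $S_d^Q(g)$ may vanish. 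So after Kahane you are left with a pointwise lower bound of the form $\bigl(\sum_Q \mu_Q^2 S_d^Q(g)^2\chi_Q\bigr)^{q/2}$, and integrating it over the disjoint sets $E_Q$ gives nothing. What actually closes the argument in the paper is a different mechanism: stratify $\mathcal{C}$ by the size of $2^{\lambda_Q}$, pass to the \emph{maximal} cubes in each stratum (sparseness controls the loss), and use \eqref{sparsedomination2} to place a definite fraction of each maximal cube inside a superlevel set of $T(f)$; summing the disjoint maximal cubes and then summing the geometric weights $2^{kr}$ recovers $\|T(f)\|_{L^q}^q$. With that bookkeeping no randomization is needed when $sp>1$: the deterministic positive test function $f=\sum_Q 2^{r\lambda_Q/(sp)}\chi_Q$ satisfies $\avr{f}{R}\ge 2^{r\lambda_Q/(sp)}$ for every $R\subseteq Q$ simply because $f\ge 0$. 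Finally, the regime you call ``most delicate'' ($sp\le1$, equivalently small $p$) is not a matter of choosing $\mu_Q$ cleverly: your test functions $\sum\epsilon_Q\mu_Q\chi_Q$ (or their sibling variants) do not obviously lie in $H_d^{p}$ with controlled norm, and the paper abandons the sparse family altogether there, working instead with the maximal dyadic cubes of the level sets $\{S_d(g)^2>2^k\}$ and cancellative Haar atoms supported on their parents, for which the key lower bound $|\avr{f}{R}|\gtrsim 2^{kt}$ requires a separate argument. As written, part (i) of your proposal does not constitute a proof.
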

To prove \eqref{thefirstcasdaydic}, we need the following rather general theorem.

\begin{theorem}\label{mainthereom}
	Let $\{g_Q\}_{Q \in \mathcal{D}}$ be a sequence of nonnegative numbers indexed by dyadic cubes, where $0 < q, r, s, p < \infty$ and $\frac{1}{q} = \frac{1}{p} + \frac{1}{r}$. Suppose there exists a constant $A$ such that for all step functions $f$ with compact support, the following inequality holds:
	\begin{equation}\label{assumption}
		\left\|\sum_{Q \in \mathcal{D}} |\langle f \rangle_{Q}|^s g_Q \chi_Q \right\|_{L^q(\rn)} \le A \|f\|_{H_d^{sp}(\rn)}^s.
	\end{equation}
	Then we have
	\begin{equation}\label{conclusion ofthe maintheorem}
		\left\|\sum_{Q \in \mathcal{D}} g_Q \chi_Q \right\|_{L^r(\rn)} \lesssim A.
	\end{equation}
\end{theorem}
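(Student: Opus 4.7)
The plan is to prove Theorem~\ref{mainthereom} by a self-improvement argument: apply the testing hypothesis to the specific test function $f=F^{\theta}$, where $F:=\sum_{Q}g_{Q}\chi_{Q}$ and $\theta:=r/(sp)$, and couple it with a pointwise lower bound to force $\|F\|_{L^{r}}\lesssim A$. By a standard truncation argument, it suffices to treat the case where only finitely many $g_{Q}$ are nonzero, all supported in some large dyadic cube $Q_{0}$; under this reduction, $F$ is a bounded compactly supported step function and $\|F\|_{L^{r}}<\infty$ a priori, which is essential for the closing step.

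The technical heart is the following pointwise inequality: setting $H_{Q}:=\sum_{R\supseteq Q}g_{R}$, I claim that for every $x$,
\[
\sum_{Q\in\mathcal{D}} H_{Q}^{r/p}\,g_{Q}\,\chi_{Q}(x)\ \gtrsim\ F(x)^{r/p+1}.
\]
Fixing $x$, enumerate the dyadic cubes containing $x$ as an increasing chain $(R_{k})_{k\in\mathbb{Z}}$, so that $F(x)=\sum_{k}g_{R_{k}}$ and $H_{R_{k}}=\sum_{j\ge k}g_{R_{j}}$. Let $k^{*}$ be the smallest index with $H_{R_{k^{*}}}\le F(x)/2$. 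For every $k<k^{*}$ we have $H_{R_{k}}>F(x)/2$, while the telescope $\sum_{k<k^{*}}g_{R_{k}}=F(x)-H_{R_{k^{*}}}\ge F(x)/2$ yields a large total weight; multiplying these two gives the claim with constant $2^{-(r/p+1)}$. Since $F\ge H_{Q}$ on $Q$, simple monotonicity (with no Jensen-type restriction on $\theta$) gives $\langle F^{\theta}\rangle_{Q}^{s}\ge H_{Q}^{s\theta}=H_{Q}^{r/p}$, so pointwise
\[
\sum_{Q}\langle F^{\theta}\rangle_{Q}^{s}\,g_{Q}\,\chi_{Q}\ \gtrsim\ F^{r/p+1}.
\]

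Taking $L^{q}$-norms and using the arithmetic identity $q(r/p+1)=r$ (equivalent to $1/q=1/p+1/r$), the testing hypothesis gives
\[
\|F\|_{L^{r}}^{r/p+1}\ \lesssim\ \Big\|\sum_{Q}\langle F^{\theta}\rangle_{Q}^{s}g_{Q}\chi_{Q}\Big\|_{L^{q}}\ \le\ A\,\|F^{\theta}\|_{H_{d}^{sp}}^{s}.
\]
Since $\theta sp=r$, one has the identity $\|F^{\theta}\|_{L^{sp}}^{s}=\|F\|_{L^{r}}^{r/p}$. When $sp>1$, the maximal characterization \eqref{maxchardyadhp} identifies $H_{d}^{sp}$ with $L^{sp}$ for nonnegative functions, hence $\|F^{\theta}\|_{H_{d}^{sp}}^{s}\lesssim\|F\|_{L^{r}}^{r/p}$; substituting this into the previous display and dividing by $\|F\|_{L^{r}}^{r/p}$ (using finiteness from Step~1) yields $\|F\|_{L^{r}}\lesssim A$.

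The main obstacle is the low-index regime $sp\le 1$, in which a nonnegative function such as $F^{\theta}$ typically fails to belong to $H_{d}^{sp}$ because $S_{d}$ of an indicator-type step function has a slowly decaying tail at infinity. The fix is to feed the assumption with the mean-zero modification $\tilde f:=F^{\theta}-\langle F^{\theta}\rangle_{Q_{0}}\chi_{Q_{0}}$; a direct Haar computation shows that $\tilde f$ has vanishing Haar coefficients on every dyadic cube strictly containing $Q_{0}$ (since any such $h_R^i$ is constant on $Q_0$ and $\int\tilde f=0$), so $S_{d}\tilde f$ is supported in $Q_{0}$ and $\tilde f\in H_{d}^{sp}$ by Plancherel on $Q_0$. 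One must then verify that the median-split inequality transfers to $\langle\tilde f\rangle_{Q}^{s}$ on the cubes where $H_{Q}^{\theta}$ dominates the constant shift $\langle F^{\theta}\rangle_{Q_{0}}$, handle the small-$F$ bad set separately, and control $\|\tilde f\|_{H_{d}^{sp}}^{s}$ by $\|F\|_{L^{r}}^{r/p}$ via an atomic/Haar decomposition combined with the Plancherel estimate on $Q_{0}$. This verification in the low Hardy-index regime is the technically most delicate part of the proof.
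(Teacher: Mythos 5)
Your argument for the range $sp>1$ is correct and takes a genuinely different, and in fact more elementary, route than the paper. Where the paper runs the general sparse domination machinery (Lemma \ref{sparslemma}) and tests against $\sum_{Q\in\mathcal{C}}2^{t\lambda_Q}\chi_Q$, you test directly against $F^{\theta}$ with $F=\sum_Q g_Q\chi_Q$ and $\theta=r/(sp)$, and replace the stopping-time/level-set bookkeeping by the pointwise inequality $\sum_Q H_Q^{r/p}g_Q\chi_Q\gtrsim F^{r/p+1}$ with $H_Q=\sum_{R\supseteq Q}g_R$. I checked the median-split argument along the chain of cubes containing $x$, the monotonicity step $\avr{F^{\theta}}{Q}\ge H_Q^{\theta}$, and the exponent arithmetic $q(r/p+1)=r$, $\theta sp=r$; all are sound, and for $sp>1$ the bound $\|F^{\theta}\|_{H_d^{sp}(\rn)}\lesssim\|F^{\theta}\|_{L^{sp}(\rn)}$ does follow from \eqref{maxchardyadhp} and the maximal theorem. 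This buys a shorter proof that avoids sparse families entirely in this range.

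The case $sp\le1$, however, contains a genuine gap, and it is precisely the case the paper needs for $p\le 1$. Your proposed repair $\tilde f:=F^{\theta}-\avr{F^{\theta}}{Q_0}\chi_{Q_0}$ does put $\tilde f$ in $H_d^{sp}(\rn)$ (the square function is indeed supported in $Q_0$), but the bound you need, $\|\tilde f\|_{H_d^{sp}(\rn)}^{s}\lesssim\|F\|_{L^r(\rn)}^{r/p}$, is false: a single global mean-zero correction does not create cancellation at intermediate scales. Take $F=\chi_Q$ for one small dyadic cube $Q\subset Q_0$. Then for every dyadic $R$ with $Q\subsetneq R\subseteq Q_0$ the Haar coefficients of $\tilde f$ are of size $|Q|\,|R|^{-1/2}$, so $S_d(\tilde f)(x)\simeq|Q|/|R_x|$ on $Q_0$, where $R_x$ is the smallest dyadic cube containing both $x$ and $Q$, and hence $\|\tilde f\|_{H_d^{sp}(\rn)}^{sp}\simeq\sum_{Q\subseteq R\subseteq Q_0}|R|\bigl(|Q|/|R|\bigr)^{sp}\simeq|Q|^{sp}|Q_0|^{1-sp}$ for $sp<1$ (with a logarithmic factor at $sp=1$). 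This is not $O(\|F^{\theta}\|_{L^{sp}}^{sp})=O(|Q|)$ and blows up as the truncation cube $Q_0$ grows, so the self-improvement loop never closes. The cancellation has to be built in at every stopping scale, which is exactly what the paper's construction does: it takes the maximal cubes of the level sets $\{G>2^k\}$, passes to their parents $Q'$, and uses $\tilde\chi_{Q'}=|Q'|^{1/2}h^{i}_{Q'}$ as building blocks, so that each block has \emph{all} Haar coefficients vanishing outside $Q'$ and the quasi-norm is simply $\|f\|_{H_d^{sp}}^{sp}\le\sum 2^{ktsp}|Q'|$, while the blocks still have $|\avr{f}{R}|\gtrsim2^{kt}$ on the relevant cubes. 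You would need to replace your $\tilde f$ by a test function of this type (or something equivalent) to complete the case $sp\le1$.
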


Let us accept this and prove Theorem \ref{dyadictheorem}.
\begin{proof}[Proof of Theorem \ref{dyadictheorem}]
	The upper bounds for the operator norm of $\pi_g$ are covered by Theorem (A), so we need to prove the lower bounds.\\
	
	\textbf{Case (i):} For the inequality \eqref{thefirstcasdaydic}, let $A = \|\pi_g\|_{\dhprn \to\dot{H}_d^q(\rn) }$. This means
	\[
	\left\|S_d(\pi_g(f))\right\|_{L^q(\rn)} \le A \|f\|_{H_d^p(\rn)},
	\]
	which is equivalent to
	\[
	\left\| \sum_{Q \in \mathcal{D}} \sum_{i \neq 0} |\langle f\rangle_{Q}|^2 \langle g, h^i_Q \rangle^2 \frac{\chi_Q}{|Q|} \right\|_{L^{\frac{q}{2}}(\rn)} \le A^2 \|f\|_{H_d^p(\rn)}^2.
	\]
	For each fixed $i \neq 0$, we have
	\[
	\left\| \sum_{Q \in \mathcal{D}} |\langle f\rangle_{Q}|^2 \langle g, h^i_Q \rangle^2 \frac{\chi_Q(x)}{|Q|} \right\|_{L^{\frac{q}{2}}(\rn)} \le A^2 \|f\|_{H_d^p(\rn)}^2,
	\]
	which matches the assumption \eqref{assumption} in Theorem \ref{mainthereom} with
	\[
	g_Q = \frac{\langle g, h^i_Q \rangle^2}{|Q|}, \quad s = 2, \quad \frac{1}{\frac{q}{2}} = \frac{1}{\frac{p}{2}} + \frac{1}{\frac{r}{2}}.
	\]
	Thus, by \eqref{conclusion ofthe maintheorem}, we get
	\[
	\left\| \sum_{Q \in \mathcal{D}} \frac{\langle g, h^i_Q \rangle^2 \chi_Q(x)}{|Q|} \right\|_{L^{\frac{r}{2}}(\rn)} \lesssim A^2.
	\]
	Summing over $i \neq 0$ and using the (quasi) triangle inequality, we obtain
	\[
	\left\|S_d(g)\right\|_{L^r(\rn)} = \|g\|_{\dot{H}_d^r(\rn)} \lesssim A,
	\]
	proving the claim for Case (i).\\
	
	\textbf{Case (ii):} For the inequality \eqref{thesecondcase dyadic}, let $A = \|\pi_g\|_{\dhprn \to \dot{H}_d^{p^*}(\rn)}$. We first consider the case where $g$ has a finite Haar expansion. For a dyadic cube $R$, choose $i \neq 0$ so that $f = |\hat{R}|^{\frac{1}{2}} h^i_{\hat{R}}$ is equal to 1 on $R$. Then
	\[
	\pi_g(f) = \sum_{Q \subseteq R} \sum_{j \neq 0} \langle g, h^j_Q \rangle h^j_Q + \sum_{Q \nsubseteq R} \sum_{j \neq 0} \langle f\rangle_{Q} \langle g, h^j_Q \rangle h^j_Q = g_1 + g_2,
	\]
	which implies
	\[
	\|g_1\|_{\dot{H}_d^{p^*}(\rn)} \le \|\pi_g(f)\|_{\dot{H}_d^{p^*}(\rn)} \le A \|f\|_{H_d^p(\rn)} = A |\hat{R}|^{\frac{1}{p}}\lesssim A |R|^{\frac{1}{p}}.
	\]
	Here, $g_1 = (g - \langle g \rangle_R) \chi_R$. Since $|g_1(x)| \le M_d(g_1)(x)$ for $x \in \rn$, by the maximal characterization of $\dot{H}_d^{p^*}(\rn)$, we have
	\[
	\|g_1\|_{L^{p^*}(\rn)} \le \|M_d(g_1)\|_{L^{p^*}(\rn)} \lesssim \|g_1\|_{\dot{H}^{p^*}(\rn)} \lesssim A |R|^{\frac{1}{p}},
	\]
	which is equivalent to
\begin{equation}\label{pstarocillation}
		\text{osc}_{p^*}(g, R) \lesssim A l(R)^{\alpha}.
\end{equation}
	Combining this with \eqref{johnnirenberg} yields $\dhlipalpharn{g} \lesssim A$, proving the claim for $g$ with a finite Haar expansion. To remove the restriction on $g$, let $N$ be a natural number and define
	\[
	g_N = \sum_{2^{-N} \le l(Q) \le 2^N} \sum_{j \neq 0} \langle g, h^j_Q \rangle h^j_Q.
	\]
	Note that $\|\pi_{g_N}\|_{\dhprn \to \dot{H}_d^{p^*}(\rn)} \le A$, which implies
	\[
	\dhlipalpharn{g_N} \lesssim A.
	\]
	For a fixed dyadic cube $R$, using \eqref{pstarocillation} and \eqref{johnnirenberg}, we get
	\[
	\|g'_N\|_{L^2(R, \frac{dx}{|R|})} \lesssim A l(R)^{\alpha},
	\]
	where
	 \[
	 g'_N = (g_N - \langle g_N \rangle_R) \chi_R = \sum_{Q \subseteq R} \sum_{j \neq 0} \langle g_N, h^j_Q \rangle h^j_Q.
	 \]
By the weak compactness of $L^2$, a subsequence of $g'_N$ converges to a function $g'_R$ with
	\[
	\|g'_R\|_{L^2(R, \frac{dx}{|R|})} \lesssim A l(R)^{\alpha}.
	\]
Since $\langle g'_N, h^i_Q \rangle$ converges to $\langle g'_R, h^i_Q \rangle$, we conclude that $g'_R$ coincides with $g$ on $R$. Hence, $g$ is a locally integrable function, and on each dyadic cube $R$, it satisfies $\text{osc}_2(g, R) \lesssim A l(R)^\alpha$, and thus,
	\[
	\dhlipalpharn{g} \lesssim A,
	\]
	which completes the proof.
\end{proof}

Now, we proceed with the proof of Theorem \ref{mainthereom}. To do this, we use the following lemma, which is similar to the construction in \cite{MR2157745} (Theorem 1.2.4). Here, it is more convenient to introduce a notation. For a family of nonnegative numbers $\left\{g_Q\right\}_{Q\in\mathcal{D}}$ and a dyadic cube $R$, we define
\[
(g|R)(x) := \sum_{Q\in\mathcal{D}(R)} g_Q \chi_Q(x).
\]

\begin{lemma}\label{sparslemma}
	Let $\left\{g_Q\right\}_{Q\in\mathcal{D}}$ be a family of nonnegative numbers, $Q_0$ a fixed dyadic cube, and $0 < \eta < 1$. Then, there exists an $\eta$-sparse family of dyadic cubes $\mathcal{C}$ in $Q_0$ with the property that:\\
	
	For each $Q \in \mathcal{C}$, there exists an integer $\lambda_Q$ such that
	\begin{align}
		&(g|Q_0) \lesssim \sum_{Q \in \mathcal{C}} 2^{\lambda_Q} \chi_Q \label{sparsedomination1} \\
		&|Q| \lesssim |\left\{ (g|Q) > 2^{\lambda_Q - 1} \right\}| \label{sparsedomination2}.
	\end{align}
\end{lemma}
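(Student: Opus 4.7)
The plan is a standard stopping-time construction applied to the partial sums $(g|R)$, in the spirit of the atomic decomposition of \cite{MR2157745} and the operator-free sparse domination recalled in Section 2.2. Starting with $\mathcal{C}_0=\{Q_0\}$, for each cube $R$ already placed in the family I carry out a Calder\'on--Zygmund-style selection at the dyadic level
\[
\lambda_R := \min\{k \in \zed : |\{x \in R : (g|R)(x) > 2^k\}| \le (1-\eta)|R|\},
\]
which is a finite integer whenever $(g|R)$ is a.e.\ finite on $R$; the trivial or degenerate cubes where this would fail can be omitted from the family with no effect on the conclusions. The essential observation is that, writing $a^R_Q := \sum_{P \in \mathcal{D}(R),\, P \supseteq Q} g_P$, one has $(g|R)(x) = \sup_{Q \in \mathcal{D}(R),\, x \in Q} a^R_Q$, a monotone supremum along the chain of dyadic cubes in $\mathcal{D}(R)$ through $x$. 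Consequently $\{(g|R) > 2^{\lambda_R}\}$ is exactly the union of the maximal dyadic cubes $R^*_j \subsetneq R$ for which $a^R_{R^*_j} > 2^{\lambda_R}$, and these $R^*_j$ are declared the $\mathcal{C}$-children of $R$.

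Property \eqref{sparsedomination2} and sparsity are then immediate. By the minimality of $\lambda_R$ one has $|\{(g|R) > 2^{\lambda_R - 1}\}| > (1-\eta)|R|$, which yields \eqref{sparsedomination2} with implicit constant $(1-\eta)^{-1}$. The $\mathcal{C}$-children of $R$ satisfy $\sum_j |R^*_j| = |\{(g|R) > 2^{\lambda_R}\}| \le (1-\eta)|R|$, so the pairwise disjoint sets $E_R := R \setminus \bigcup_j R^*_j$ obey $|E_R| \ge \eta|R|$; disjointness of the $E_R$'s across different $R, R' \in \mathcal{C}$ is automatic from the dyadic structure (for nested pairs the inner cube sits inside a $\mathcal{C}$-child of the outer one, and incomparable cubes are disjoint).

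For the sparse domination \eqref{sparsedomination1} I telescope along the chain of $\mathcal{C}$-cubes containing a point. Since each generation of $\mathcal{C}$-children covers at most a $(1-\eta)$-fraction of the parent, iterating shows that the set of $x \in Q_0$ lying in infinitely many cubes of $\mathcal{C}$ has measure zero. For almost every $x$ the chain $\{R \in \mathcal{C} : x \in R\}$ is therefore a finite decreasing sequence $Q_0 = R_0 \supsetneq R_1 \supsetneq \cdots \supsetneq R_N$. Splitting the dyadic cubes $P \in \mathcal{D}(Q_0)$ containing $x$ into the segments $S_i := \{P \in \mathcal{D}(R_i) : R_i \supseteq P \supsetneq R_{i+1}\}$ for $i = 0, \dots, N-1$ and the tail $\{P \in \mathcal{D}(R_N) : x \in P\}$ yields
\[
(g|Q_0)(x) = \sum_{i=0}^{N-1} a^{R_i}_{\widehat{R_{i+1}}} + (g|R_N)(x),
\]
where $\widehat{R_{i+1}}$ denotes the parent of $R_{i+1}$ inside $\mathcal{D}(R_i)$. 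Maximality of $R_{i+1}$ as a $\mathcal{C}$-child of $R_i$ forces $a^{R_i}_{\widehat{R_{i+1}}} \le 2^{\lambda_{R_i}}$, while $(g|R_N)(x) \le 2^{\lambda_{R_N}}$ because $x$ avoids every $\mathcal{C}$-child of $R_N$; summing gives $(g|Q_0)(x) \le \sum_{i=0}^N 2^{\lambda_{R_i}} = \sum_{R \in \mathcal{C},\, x \in R} 2^{\lambda_R}$, which is \eqref{sparsedomination1}.

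The main obstacle is the bookkeeping in this telescoping step: one must correctly identify which summands $g_P$ belong to which segment of the chain and recognize that the ``ancestor sum up to but not including $R_{i+1}$'' is precisely the quantity kept below $2^{\lambda_{R_i}}$ by the stopping rule at the scale of $R_i$, not at the smaller scale of $R_{i+1}$. Everything else --- the geometric decay of the deep chains, the sparsity count, and the handling of degenerate cubes --- is routine.
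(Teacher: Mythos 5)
Your construction is correct, and it reaches the lemma by a genuinely different (more self-contained) route than the paper. The paper does not run the stopping time by hand: it sets $f_Q=(g|Q)$ and $f_{P,Q}=f_Q-f_P$, invokes the general sparse domination theorem of \cite{operatorfree} quoted in Section 2.2, observes that $f_{P,Q}$ is constant on $P$ so the term $m_Q^{\#}f$ vanishes, and then reads off $\lambda_Q$ from the non-increasing rearrangement $(f_Q\chi_Q)^*(\tfrac{1-\eta}{2^{n+2}}|Q|)$, which yields \eqref{sparsedomination2} with constant $2^{n+2}/(1-\eta)$. You instead perform the Calder\'on--Zygmund selection directly on the monotone ancestor sums $a^R_Q$; the fact that these sums are constant on the selected children is exactly the reason the oscillation term vanishes in the paper's version, so the two arguments exploit the same structural feature. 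What your route buys is a proof with no black box and sharper constants ($1$ in \eqref{sparsedomination1} and $(1-\eta)^{-1}$ in \eqref{sparsedomination2}); what the paper's route buys is brevity and uniformity, since the same general theorem is reused later (Lemma \ref{sparsedominationlemma}) in a situation where the maximal sharp term genuinely does not vanish and must be estimated.

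One small point to tighten: your claim that $\lambda_R$ is a finite integer whenever $(g|R)$ is a.e.\ finite is not accurate --- the minimum fails to exist precisely when $|\{(g|R)>0\}|\le(1-\eta)|R|$, independently of finiteness. The fix is the one you gesture at and the one the paper also uses: assign such $R$ the coefficient $0$ (i.e., drop it from $\mathcal{C}$) but still recurse into the maximal cubes $Q\subsetneq R$ with $a^R_Q>0$; the telescoping identity then goes through with $2^{\lambda_R}$ replaced by $0$, since the ancestor sums at the parents of those children vanish by maximality. With that sentence made explicit, the proof is complete.
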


\begin{proof}
Let 
\[
f_Q = (g|Q), \quad f_{P,Q} = f_Q - f_P = \sum_{P \subsetneq R \subseteq Q} g_R \chi_R, \quad P \in \mathcal{D}(Q),
\]
be as in Theorem \ref{operatorfree}. Then, we note that the condition 
\[
|f_{P,Q}| \le |f_Q| + |f_P|,
\]
holds, and therefore, an application of Theorem \ref{operatorfree} gives a family of cubes in $Q_0$, which is $\eta$-sparse and such that
\begin{equation}\label{1111}
	(g|Q_0) \lesssim \sum_{Q \in \mathcal{C}} \gamma_Q \chi_Q,
\end{equation}
where in the above
\begin{equation*}\label{dddddd}	
	\gamma_Q=(f_Q\chi_Q)^*(\frac{1-\eta}{2^{n+2}}|Q|)+(m_Q^{\#}f)^*(\frac{1-\eta}{2^{n+2}}|Q|).
\end{equation*}
	Here, the first thing to note is that $f_{P,Q}$ is constant on $P$ and thus $\text{osc}(f_{P,Q},P)=0$, which implies that $m_Q^{\#}f$ vanishes on $Q$. Now, for $Q\in\mathcal{C}$, if $(f_Q\chi_Q)^*(\frac{1-\eta}{2^{n+2}}|Q|)$ is zero it doesn't appear in \eqref{1111}, and we remove $Q$ from $\mathcal{C}$, and if not let $\lambda_Q$ be an integer such that
	\[
2^{\lambda_Q-1}<(f_Q\chi_Q)^*(\frac{1-\eta}{2^{n+2}}|Q|)\le 2^{\lambda_Q},
	\]
	then \eqref{sparsedomination1} holds, and from the definition of non-increasing rearrangement we must have
	\[
	\frac{1-\eta}{2^{n+2}}|Q|< |\{f_Q>2^{\lambda_Q-1}\}|=|\left\{(g|Q)>2^{\lambda_Q-1}\right\}|,
	\]
	which shows that \eqref{sparsedomination2} holds as well, and this completes the proof.
\end{proof}
We break the proof of Theorem \ref{mainthereom} into two parts, depending on whether \( 1<sp<\infty \), in which case our argument heavily relies on sparseness, or \( 0<sp \leq 1 \), where instead of sparseness, we take advantage of the sub-additivity of the \( L^{sp} \) norm.

\begin{proof}[Proof of Theorem \ref{mainthereom}]
Case (1). \(1 < sp < \infty\):\\

First, assume that there are only finitely many nonzero coefficients in \(\left\{g_R\right\}_{R \in \mathcal{D}}\), and let \(Q_0\) be a dyadic cube. Then, an application of Lemma \ref{sparslemma} with \(\eta = \frac{1}{2}\) gives us a \(\frac{1}{2}\)-sparse collection of dyadic cubes \(\mathcal{C}\), satisfying \eqref{sparsedomination1} and \eqref{sparsedomination2}. Now, observe that from Lemma \ref{lemmasparse} it follows that
\begin{equation}\label{estimateforg}
	\lVert (g|Q_0) \rVert_{L^r(\rn)}^r \lesssim \sum_{Q \in \mathcal{C}} 2^{r \lambda_Q} |Q|.
\end{equation}
Then, let
\[
T(f) = \sum_{R \in \mathcal{D}} |\avr{f}{R}|^s g_R \chi_R, \quad f = \sum_{Q \in \mathcal{C}} 2^{t \lambda_Q} \chi_Q, \quad t = \frac{r}{sp},
\]
and note that 
\[
\avr{f}{R} \ge 2^{t \lambda_Q}, \quad R \in \mathcal{D}(Q), \quad Q \in \mathcal{C},
\]
which implies that
\begin{equation}\label{replacinggbyT}
	\left\{ (g|Q) > 2^{\lambda_Q - 1} \right\} \subseteq \left\{ T(f) > 2^{(st + 1) \lambda_Q - 1} \right\} \cap Q.
\end{equation}
Also, another application of Lemma \ref{lemmasparse} gives us
\begin{equation}\label{estimateforf}
	\|f\|_{L^{sp}(\rn)} \simeq \left( \sum_{Q \in \mathcal{C}} 2^{r \lambda_Q} |Q| \right)^{\frac{1}{sp}}.
\end{equation}
Now, we proceed to estimate the right-hand side of \eqref{estimateforg}. To this aim, let us partition \(\mathcal{C}\) as
\[
\mathcal{C}'_k := \left\{ Q \in \mathcal{C} \mid \lambda_Q = k \right\}, \quad k \in \zed,
\]
which implies that
\begin{equation}\label{sumksumq}
	\sum_{Q \in \mathcal{C}} 2^{r \lambda_Q} |Q| = \sum_{k \in \zed} 2^{kr} \sum_{Q \in \mathcal{C}'_k} |Q|.
\end{equation}
Now, let \(\mathcal{C}''_k\) be the collection of maximal cubes in \(\mathcal{C}'_k\), and note that since this collection is \(\frac{1}{2}\)-sparse, we can estimate the last sum as
\begin{equation}\label{passigtomaximal}
	\sum_{Q \in \mathcal{C}'_k} |Q| \le 2 \sum_{Q \in \mathcal{C}'_k} |E_Q| \le 2 \left| \cup \left\{ Q \in \mathcal{C}'_k \right\} \right| = 2 \sum_{Q \in \mathcal{C}''_k} |Q|.
\end{equation}
Also, from the second property \eqref{sparsedomination2} of the cubes in \(\mathcal{C}\), together with \eqref{replacinggbyT}, we obtain
\[
\sum_{Q \in \mathcal{C}''_k} |Q| \lesssim \sum_{Q \in \mathcal{C}''_k} \left| \left\{ (g|Q) > 2^{k-1} \right\} \right| \le \sum_{Q \in \mathcal{C}''_k} \left| \left\{ T(f) > 2^{(st+1)k-1} \right\} \cap Q \right|, \quad k\in \zed,
\]
and then by noting that cubes in \(\mathcal{C}''_k\) are disjoint, we get the following estimate:
\[
\sum_{Q \in \mathcal{C}''_k} |Q| \lesssim \left| \left\{ T(f) > 2^{(st+1)k-1} \right\} \right|, \quad k \in \zed.
\]
Putting the above inequality together with \eqref{sumksumq} and \eqref{passigtomaximal}, we obtain
\[
	\sum_{Q \in \mathcal{C}} 2^{r \lambda_Q} |Q| \lesssim \sum_{k \in \zed} 2^{kr} \left| \left\{ T(f) > 2^{(st+1)k-1} \right\} \right|,
\]
which, after noting that \(st+1=\frac{r}{q}\), and using the layer cake formula implies that
\[
\sum_{Q \in \mathcal{C}} 2^{r \lambda_Q} |Q| \lesssim \int \sum_{2^{\frac{kr}{q}} < 2 T(f)(x)} 2^{kr} \, dx \lesssim \int_{\rn} T(f)^q.
\]
At the end, we use \eqref{assumption} and \eqref{estimateforf} to obtain
\[
\sum_{Q \in \mathcal{C}} 2^{r \lambda_Q} |Q| \lesssim \int_{\rn} T(f)^q \le A^q \|f\|_{L^{sp}}^{sq} \lesssim A^q \left( \sum_{Q \in \mathcal{C}} 2^{r \lambda_Q} |Q| \right)^{\frac{q}{p}},
\]
which, together with \eqref{estimateforg}, gives us
\[
\lrrnn{(g|Q_0)} \lesssim \left( \sum_{Q \in \mathcal{C}} 2^{r \lambda_Q} |Q| \right)^{\frac{1}{r}} \lesssim A.
\]
Now, since there are only finitely many nonzero terms in \(\left\{ g_R \right\}_{R \in \mathcal{D}}\), for \(2^n\) large dyadic cubes in each octant of \(\rn\), we have
\[
\sum_{R \in \mathcal{D}} g_R \chi_R = \sum_{i=1}^{2^n} (g|Q_i), \quad Q_i = [0, \pm 2^N]^n.
\] 
Then, applying the above inequality to each cube \(Q_i\), and using the (quasi) triangle inequality, we get
\[
\lrrnn{\sum_{R \in \mathcal{D}} g_R \chi_R} \lesssim A,
\]
and this proves the claim when there are only finitely many nonzero terms.
 To remove this restriction, let 
 \[
 (g_N)_R = \begin{cases*}
 	g_R & $2^{-N} \le l(R) \le 2^{N}$, \\
 	0 & \text{Otherwise},
 \end{cases*}
 \]
 Then from the fact that the assumption \eqref{assumption} still holds with \(A\), and the above inequality, we get 
 \[
 \lrrnn{\sum_{2^{-N} \le l(R) \le 2^{N}} g_R \chi_R} \lesssim A,
 \]
 which, after an application of Fatou's lemma, gives the desired conclusion and completes the proof of the first case.\\

Case(2). $0<sp\le1$. Here, we cannot use the function \(f\), constructed above, as a test function because functions in \(H_d^{sp}(\rn)\) must have lots of cancellations. However, in this case, the sparseness of the family is not necessary, and we may use sub-additivity which helps us to repeat a similar argument.\\

So, as in the previous case, suppose only finitely many terms in \(\{g_R\}_{R\in\mathcal{D}}\) are nonzero. This time, let 
\[
G := \sum_{R\in\mathcal{D}} g_R \chi_R,
\]
and for \(k \in \zed\), let \(\mathcal{C}_k\) be the collection of maximal dyadic cubes in \(\{G > 2^k\}\). From the layer cake formula we have
\begin{equation}\label{GGGG}
	\lrrnn{G}^r \simeq \sum_{k \in \zed} 2^{rk} |\{G > 2^k\}| = \sum_{k \in \zed} 2^{rk} \sum_{Q \in \mathcal{C}_k} |Q|.
\end{equation}
Now, just like in the previous case, we try to estimate the last sum. In order to do this, let \(\hat{\mathcal{C}}_k\) be the collection of maximal cubes in \(\{\hat{Q} \mid Q \in \mathcal{C}_k\}\), and for each \(Q' \in \hat{\mathcal{C}}_k\), let \(\tilde{\chi}_{Q'} = |Q'|^{\frac{1}{2}} h^i_{Q'}\) for some \(i \neq 0\). We note that this function is either \(+1\) or \(-1\) on the children of $Q'$, and belongs to \(H_d^{sp}(\rn)\) with \(\|\tilde{\chi}_{Q'}\|^{sp}_{H_d^{sp}(\rn)} = |Q'|\). Then, let
\[
T(f) = \sum_{R \in \mathcal{D}} |\avr{f}{R}|^s g_R \chi_R, \quad f = \sum_{k \in \zed} 2^{kt} \sum_{Q \in \hat{\mathcal{C}}_k} \tilde{\chi}_Q \quad t = \frac{r}{sp}.
\]
The above function, $f$, has the following two crucial properties:
\begin{align}
	&\|f\|^{sp}_{H_d^{sp}(\rn)} \lesssim \sum_{j \in \zed} 2^{rk} \sum_{Q \in \mathcal{C}_k} |Q|,\label{hspnormf}\\
	& |\avr{f}{R}| \gtrsim 2^{kt}, \quad R \in \mathcal{D}(Q), \quad Q \in \mathcal{C}_k, \quad k \in \zed.\label{theavrageoff}
\end{align}
To see the first one, note that from sub-additivity it follows
\[
\|f\|^{sp}_{H_d^{sp}(\rn)} \le \sum_{k \in \zed} 2^{ktsp} \sum_{Q' \in \hat{\mathcal{C}}_k} \|\tilde{\chi}_{Q'}\|^{sp}_{H_d^{sp}(\rn)} \le 2^n \sum_{k \in \zed} 2^{rk} \sum_{Q \in \mathcal{C}_k} |Q|.
\]
In order to see the second property, let \(Q \in \mathcal{C}_k\) and fix \(R \in \mathcal{D}(Q)\). Then, we decompose \(f\) as
\[
f(x) = \sum_{j \in \zed} 2^{jt} \sum_{\substack{Q' \in \hat{\mathcal{C}}_j \\ Q' \subseteq R}} \tilde{\chi}_{Q'}(x) + \sum_{j \in \zed} 2^{jt} \sum_{\substack{Q' \in \hat{\mathcal{C}}_j \\ R \subsetneq Q'}} \tilde{\chi}_{Q'}(x) = f_1(x) + f_2(x), \quad x \in R.
\]
Now, because of the cancellation of the functions \(\tilde{\chi}_{Q'}\), we have \(\avr{f_1}{R} = 0\). Furthermore, since $\tilde{\chi}_{Q'}$, is either \(+1\) or \(-1\) on the children of $Q'$, \(f_2\) is constant on \(R\). Next, we note that for each \(j \in \zed\), \(R\) is contained in at most one cube in \(\hat{\mathcal{C}}_j\), and when this inclusion is strict, the contribution of each term in the right-hand sum on \(R\) is either \(+2^{jt}\) or \(-2^{jt}\). Therefore, 
\[
f_2 = \sum_{\substack{Q' \in \hat{\mathcal{C}}_j \\ R \subsetneq Q'}} \pm 2^{jt} \simeq \pm2^{jl},
\]
where in the above \(l\) is the largest \(j \in \zed\) such that \(R\) is strictly contained in a cube \(Q' \in \hat{\mathcal{C}}_j\). Then, since \(R \subseteq Q \subsetneq \hat{Q}\), and \(\hat{Q}\) is contained in a cube in \(\hat{\mathcal{C}}_k\), we conclude that \(l \ge k\), and this shows that
\[
|\avr{f}{R}| = |\avr{f_2}{R}|\simeq 2^{lt}\ge 2^{kt},
\]
which proves the second property of \(f\). Next, we proceed to estimate the measure of the level sets of \(G\) in terms of \(T(f)\). So, let \(Q \in \mathcal{C}_k\). We claim that
\begin{equation}\label{Tclaim}
	Q \subseteq \{T(f) \gtrsim 2^{(st+1)k}\},
\end{equation}
and in order to see this, we consider two cases: either \(g_Q > 2^{k-1}\) or \(g_Q \le 2^{k-1}\). In the first case, the claim follows from \eqref{theavrageoff} as we have
\begin{equation*}
	T(f) \ge |\avr{f}{Q}|^s g_Q \chi_Q \gtrsim 2^{(st+1)k} \chi_Q.
\end{equation*}
For the second case, we note that by the maximality of \(Q\) in \(\{G > 2^k\}\), we have
\begin{equation}\label{2kinequality}
	\sum_{Q \subseteq R} g_R \chi_R > 2^k \chi_Q,
\end{equation}
and since we assume \(g_Q \le 2^{k-1}\), we must have
\begin{equation*}
	\sum_{\hat{Q} \subseteq R} g_R \chi_R > 2^{k-1} \chi_{\hat{Q}},
\end{equation*}
So for \(Q'\) a maximal cube in \(\mathcal{C}_{k-1}\), we have \(\hat{Q} \subseteq Q'\), and then the maximality of \(Q'\) implies that
\begin{equation}\label{2k-1ineq}
	\sum_{\hat{Q'} \subseteq R} g_R \chi_R \le 2^{k-1} \chi_{\hat{Q'}}, \quad Q \subsetneq \hat{Q} \subseteq Q' \subsetneq \hat{Q'}.
\end{equation}
Subtracting \eqref{2k-1ineq} from \eqref{2kinequality}, we obtain
\begin{equation*}
	\sum_{Q \subseteq R \subseteq Q'} g_R \chi_R > 2^{k-1} \chi_Q.
\end{equation*}
Now, since \(Q' \in \mathcal{C}_{k-1}\), it follows from \eqref{theavrageoff} that we have
\[
|\avr{f}{R}| \gtrsim 2^{t(k-1)}, \quad Q \subseteq R \subseteq Q',
\]
which together with the above inequality implies that 
\[
T(f) \ge \sum_{Q \subseteq R \subseteq Q'} |\avr{f}{R}|^s g_R \chi_R \gtrsim 2^{ts(k-1)} \sum_{Q \subseteq R \subseteq Q'} g_R \chi_R \gtrsim 2^{k(st+1)} \chi_Q,
\]
which proves our claim in \eqref{Tclaim}. Now, it follows from \eqref{Tclaim} and disjointness of cubes in \(\mathcal{C}_k\) that
\[
\sum_{Q \in \mathcal{C}_k} |Q| \le |\{T(f) \gtrsim 2^{k(st+1)}\}|, \quad k \in \zed,
\]
and the rest of the proof follows the same line as in the previous case. Inserting the above estimate in the right-hand side of \eqref{GGGG} gives us
\[
\sum_{k \in \zed} 2^{rk} \sum_{Q \in \mathcal{C}_k} |Q| \le \sum_{k \in \zed} 2^{rk} |\{T(f) \gtrsim 2^{k(st+1)}\}| \lesssim \int_{\rn} T(f)^q \lesssim A^q \|f\|^{sq}_{H_d^{sp}(\rn)},
\]
and then using \eqref{hspnormf} and \eqref{GGGG} we obtain
\[
\lrrnn{G} \lesssim A,
\]
which is the desired result when there are only finitely many nonzero terms in \(\{g_R\}_{R \in \mathcal{D}}\), and then the limiting argument presented at the end of the previous case extends the result to the general case, and this completes the proof of this case and Theorem \ref{mainthereom}.
\end{proof}

We conclude this section by giving an example which shows that for \(0 < q < 1\), the operator norm of the formal adjoint of a dyadic paraproduct \(\pi_g\) can be much smaller than the norm of the operator itself. This happens because of the fact that the Hahn-Banach theorem fails for \(\dhqrn\) \cite{MR0259579,MR2163588}.\\

\textbf{Example}. 
Let \(g\) be a dyadic distribution on \(\mathbb{R}\). Then the formal adjoint of \(\pi_g\) is given by
\[
\pi_g^t(f) := \sum_{I \in \mathcal{D}(\mathbb{R})} \avr{f, h_I}{} \avr{g, h_I}{} \frac{\chi_I}{|I|}.
\]
Now let 
\[
g = \sum_{\substack{I \in \mathcal{D}[0,1] \\ |I| = 2^{-l}}} |I|^{\frac{1}{2}} h_I, \quad \frac{1}{q} = \frac{1}{p} + \frac{1}{r}, \quad 0 < q < 1 < p < \infty,
\]
and note that \(S(g) = \chi_{[0,1]}\), so we have \(\|g\|_{\dot{H}_d^r(\mathbb{R})} = 1\), and Theorem \ref{dyadictheorem} implies that
\[
\|\pi_g\|_{H_d^p(\mathbb{R}) \to \dot{H}_d^q(\mathbb{R})} \simeq 1.
\]
In fact, here we do not need to use Theorem \ref{dyadictheorem} to conclude this. This can be simply proven by testing the operator \(\pi_g\) on \(g\) itself. However, the norm of the adjoint operator \(\pi_g^t: \dot{\Lambda}^{\frac{1}{q}-1}(\mathbb{R}) \to (H_d^p(\mathbb{R}))'\) can be estimated as follows: Let \(\|f\|_{\dot{\Lambda}^{\frac{1}{q}-1}} = 1\), and note that
\[
|\avr{f, h_I}{}| \leq \text{osc}_1(f, I) |I|^{\frac{1}{2}} \leq |I|^{\frac{1}{q} - \frac{1}{2}},
\] 
which implies that
\[
|\pi_g^t(f)| \leq \sum_{\substack{I \in \mathcal{D}[0,1] \\ |I| = 2^{-l}}} |I|^{\frac{1}{q} - 1} \chi_I = 2^{l\left(1 - \frac{1}{q}\right)} \chi_{[0,1]},
\]
and thus
\[
\|\pi_g^t(f)\|_{(H_d^p(\mathbb{R}))'} \simeq \|\pi_g^t(f)\|_{L^{p'}(\mathbb{R})} \leq 2^{l\left(1 - \frac{1}{q}\right)},
\]
which shows that for large \(l\), the operator norm of \(\pi_g^t\) is much smaller than that of \(\pi_g\).

\section{Fourier paraproducts}
In this section, we show that similar results hold for the operator $\Pi_{g,\varphi}$. Here, we are faced with many error terms and are forced to assume more than merely the boundedness of $\Pi_{g,\varphi}$. To resolve these difficulties, instead of merely assuming that $\Pi_{g,\varphi}$ is bounded from $\hprn$ to $\dot{H}^q(\rn)$, we assume that the sublinear operator 
\begin{equation}\label{sgdefinition}
	\mathcal{S}_{g,\varphi}(f):=\left(\sum_{j\in\zed}|\varphi_{2^{-j}}*f\Delta_j(g)|^2\right)^{\frac{1}{2}},
\end{equation} 
is bounded from $\hprn$ to $\lqrn$. When $\hat{\varphi}$ has compact support and satisfies the restriction mentioned in Theorem (B), this extra assumption is equivalent to assuming that the operators $\Pi_{i,g,\varphi}$, mentioned in \eqref{pigdefinition}, are all bounded. The reason for this is that
\[
\mathcal{S}_{g,\varphi}(f)(x)\simeq \sum_{0\le i<m} S_\theta(\Pi_{i,g,\varphi}f)(x), \quad x\in\rn,
\]
and thus
\[
\|\mathcal{S}_{g,\varphi}\|_{\hprn\to\lqrn}\simeq \sum_{0\le i<m} \|\Pi_{i,g,\varphi}\|_{\hprn\to\dot{H}^q(\rn)}.
\]
Now, we state the main result of this section.

\begin{theorem}\label{continuousparp}
	Let $\psi$ be a Schwartz function as in \eqref{psiconditins} and \eqref{partionofunity}, and let $\varphi$ be a Schwartz function whose Fourier transform is supported in a ball with radius $\textbf{a}'<\textbf{a}$, where $\textbf{a}$ is as in \eqref{psiconditins}, and equal to $1$ in a smaller neighborhood of the origin. Also, let $g$ be a tempered distribution on $\rn$, and let the sublinear operator $\mathcal{S}_{g,\varphi}$ be as in \eqref{sgdefinition}. Then we have
	\begin{align}
		&\tag{I}\|\mathcal{S}_{g,\varphi}\|_{\hprn\to L^{p^*}(\rn)}\simeq \|g\|_{\ddot{\Lambda}^\alpha(\rn)}, \quad \frac{1}{p^*}=\frac{1}{p}-\frac{\alpha}{n},\quad 0<\alpha p<n, \quad 0<p<\infty\label{hpstarhpclaim},\\
		&\tag{II}\|\mathcal{S}_{g,\varphi}\|_{\hprn\to\lprn}\simeq \|g\|_{\dot{BMO}(\rn)}, \quad 0<p<\infty\label{hphpclaim},\\
		&\tag{III}\|\mathcal{S}_{g,\varphi}\|_{\hprn\to\lqrn}\simeq \|g\|_{\dot{H}^r(\rn)}, \quad \frac{1}{q}=\frac{1}{p}+\frac{1}{r}, \quad 0<q,p,r<\infty\label{hphqclaim}.
	\end{align}
\end{theorem}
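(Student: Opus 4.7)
The upper bounds in (I), (II), (III) are immediate consequences of Theorem (B) via the identity $\|\mathcal S_{g,\varphi}\|_{H^p\to L^q}\simeq \sum_{0\le i<m}\|\Pi_{i,g,\varphi}\|_{H^p\to\dot H^q}$ established before the theorem statement. For the matching lower bounds the plan is to mirror the dyadic Theorem \ref{dyadictheorem}: Haar test functions are replaced by smooth bumps whose Fourier transform is compactly supported inside the ``flat region'' of $\hat\varphi$, so that at sufficiently small scales each operator $\varphi_{2^{-k}}*$ acts as the identity. Concretely, fix a Schwartz $\rho$ with $\rho(0)=1$, $\rho\ge 1/2$ on a ball $B(0,c_0)$, and $\hat\rho$ supported in a ball $\{|\xi|\le b_0\}$ lying inside $\{\hat\varphi\equiv 1\}$. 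Put $f_{x_0,t}(x):=\rho((x-x_0)/t)$. Then $\|f_{x_0,t}\|_{H^p}\simeq t^{n/p}$, and for every $k$ with $2^{-k}\le c t$ one has $\varphi_{2^{-k}}*f_{x_0,t}=f_{x_0,t}$ by the Fourier-support condition.

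For Case (I), let $A:=\|\mathcal S_{g,\varphi}\|_{H^p\to L^{p^*}}$ and, given $j$, pick $x_j$ with $|\Delta_j g(x_j)|\ge\tfrac12\|\Delta_j g\|_\infty$. Test with $f:=f_{x_j,2^{-j}}$. On a small ball $B(x_j,c_1 2^{-j})$ Bernstein's inequality (applicable since $\widehat{\Delta_j g}$ lies in an annulus of radius $\simeq 2^j$) forces $|\Delta_j g|\ge \tfrac14\|\Delta_j g\|_\infty$, while $|f|\ge 1/2$ and $\varphi_{2^{-j}}*f=f$; hence $\mathcal S_{g,\varphi}(f)\gtrsim \|\Delta_j g\|_\infty$ on that ball. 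Comparing $L^{p^*}$-norms with $\|f\|_{H^p}\simeq 2^{-jn/p}$ yields $2^{j\alpha}\|\Delta_j g\|_\infty\lesssim A$, and taking the supremum in $j$ gives $\|g\|_{\ddot\Lambda^\alpha}\lesssim A$ via the Littlewood-Paley characterization of $\ddot\Lambda^\alpha$.

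For Case (II), fix a cube $Q$ of side $t=\ell(Q)$ centered at $x_Q$ and test with $f_{x_Q,t}$. The Fourier-support setup gives $\varphi_{2^{-j}}*f=f$ for every $j\ge j_0\simeq -\log_2 t$, and $|f|\ge 1/2$ on $B(x_Q,c_0 t)$, so that
\[
\int_{B(x_Q,c_0 t)}\Bigl(\sum_{j\ge j_0}|\Delta_j g|^2\Bigr)^{p/2}dx\lesssim A^p|Q|,\qquad A:=\|\mathcal S_{g,\varphi}\|_{H^p\to L^p}.
\]
When $p\ge 2$, Jensen's inequality promotes this to $|Q|^{-1}\int_Q\sum_{j:\,2^{-j}\le t}|\Delta_j g|^2\lesssim A^2$, which is the Carleson-measure characterization of $\dot{BMO}$. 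When $p<2$, the same conclusion is obtained by estimating $\|\Pi_g f\|_{L^p}\le \|\Pi_g f\|_{\dot H^p}\lesssim A|Q|^{1/p}$ (using $|h|\le M_\varphi h$ pointwise on a polynomial representative), noting that on $Q$ the paraproduct $\Pi_g f$ coincides with $g-c_Q$ up to a smooth low-frequency residual (with $c_Q$ the value at $x_Q$ of the low-frequency part of $g$), and bounding this residual's oscillation on $Q$ via Bernstein applied to $\sum_{j<j_0}\Delta_j g$ together with the pointwise bound $\sup_j\|\Delta_j g\|_\infty\lesssim A$ produced by Case (I) specialized to $\alpha=0$. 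In either regime $\operatorname{osc}_p(g,Q)\lesssim A$ uniformly in $Q$, and John--Nirenberg gives $\|g\|_{\dot{BMO}}\lesssim A$.

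For Case (III), we adapt the proof of \eqref{thefirstcasdaydic} through Theorem \ref{mainthereom}. For each $j$ and each $Q\in\mathcal D$ with $\ell(Q)=2^{-j}$, set $g_Q:=\sup_{y\in Q}|\Delta_j g(y)|^2$; Theorem \ref{polyainequality} ensures that $\Delta_j g$ is essentially constant on such cubes, so that $\sum_{Q\in\mathcal D}g_Q\chi_Q\simeq S(g)^2$. Given a dyadic step function $f$ of compact support, form $F:=\sigma*f$ with $\sigma$ a fixed Schwartz bump having $\int\sigma=1$ and $\hat\sigma$ compactly supported inside the flat region of $\hat\varphi$; then $\|F\|_{H^p}\lesssim \|f\|_{H^p_d}$ via maximal characterizations, and $|\varphi_{2^{-j}}*F(x)|\gtrsim |\langle f\rangle_Q|$ for $x\in Q$ with $\ell(Q)=2^{-j}$. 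Applying $\|\mathcal S_{g,\varphi}(F)\|_{L^q}\le A\|F\|_{H^p}$ and squaring yields
\[
\Bigl\|\sum_{Q\in\mathcal D}|\langle f\rangle_Q|^2 g_Q\chi_Q\Bigr\|_{L^{q/2}}\lesssim A^2\|f\|_{H^p_d}^2,\qquad A:=\|\mathcal S_{g,\varphi}\|_{H^p\to L^q},
\]
which is precisely the hypothesis \eqref{assumption} of Theorem \ref{mainthereom} with $s=2$ and parameter triple $(p/2,q/2,r/2)$. The conclusion \eqref{conclusion ofthe maintheorem} gives $\|\sum_Q g_Q\chi_Q\|_{L^{r/2}}\lesssim A^2$, i.e., $\|S(g)\|_{L^r}\lesssim A$, hence $\|g\|_{\dot H^r}\lesssim A$ by the square-function characterization of $\dot H^r$. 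The principal obstacles are (a) the careful transfer in Case (III) from the Schwartz bumps seen naturally by the Fourier paraproduct back to the dyadic step functions demanded by Theorem \ref{mainthereom}, and (b) in Case (II) for $p<2$, the extraction of a BMO oscillation estimate from a merely localized $L^p$ square-function bound, where the low-frequency residual must be treated separately through Bernstein and a bootstrap from Case (I).
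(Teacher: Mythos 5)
Your Case (I) follows the paper's test-function idea and is essentially correct, except that replacing the paper's Nikolskij step (which bounds $|\Delta_j g(x_0)|$ pointwise at \emph{every} $x_0$ from the $L^{p^*}$ bound on the compactly Fourier-supported product $f_{j,x_0}\Delta_j g$) by ``pick a near-maximizer $x_j$ and apply Bernstein'' presupposes that $\|\Delta_j g\|_{\infty}$ is finite, which is exactly what has to be proved; this is patchable but should be fixed. The serious problems are in Cases (II) ($p<2$) and (III). In Case (III) your reduction to Theorem \ref{mainthereom} does not work. First, setting $g_Q:=\sup_{y\in Q}|\Delta_j g(y)|^2$ and claiming $\sum_Q g_Q\chi_Q\simeq S(g)^2$ because $\Delta_j g$ is ``essentially constant'' on cubes of side $2^{-j}$ is false: a band-limited function satisfies a gradient bound $\|\nabla\Delta_j g\|_\infty\lesssim 2^j\|\Delta_j g\|_\infty$ but can still vanish at points of $Q$ while $\sup_Q|\Delta_j g|>0$, so $\sup_Q|\Delta_j g|^2\lesssim|\Delta_j g(x)|^2$ fails, and with it the verification of hypothesis \eqref{assumption}. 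Second, the claim $|\varphi_{2^{-j}}*(\sigma*f)(x)|\gtrsim|\langle f\rangle_Q|$ for $\ell(Q)=2^{-j}$ with a single fixed unit-scale $\sigma$ is false for $j$ large (take $f$ a mean-zero Haar-type function at scale $2^{-j}$: its dyadic average over one child is of size one while $\sigma*f\approx 0$). The paper instead (a) proves a genuinely new sparse domination for the continuous square function (Lemma \ref{sparsedominationlemma}), in which the non-constancy of $\Delta_j g$ on cubes produces an extra error term $\alpha^{-1}\langle M^*_{\nabla\psi}(g)^s\rangle_Q^{1/s}$ that must later be absorbed using the a priori finiteness supplied by Lemma \ref{appriorilemma}; (b) builds scale-adapted test atoms $\tilde\chi_Q$ with $\varphi_t*\tilde\chi_Q\ge\frac13\chi_Q$ for all $t\le\alpha\ell(Q)$ (Lemma \ref{testfunctionlemma}); and (c) sums them with \emph{random signs} $\epsilon_Q$, $\omega_j$ and applies Khintchine twice to decouple the cancellations between different cubes and different frequencies. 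None of these three ingredients is present or replaceable by your dyadic transfer.

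In Case (II), the $p\ge2$ branch is fine if you are willing to quote the known ``Carleson measure implies $BMO$'' theorem, but note this is precisely the duality-based result the paper's argument is designed to reprove directly, and it does not extend to $p<2$ because Jensen goes the wrong way there. For $p<2$ your sketch omits the crux: since your bump $h$ is not identically $1$ on $Q$, the identity $P(h)=h\,(g'-S^\theta_{k-k_0-1}(g'))$ leaves error terms $(h-h(x_0))(g'-\langle g'\rangle_Q)$ and $(h-h(x_0))\Theta_{2^{-k+k_0+1}}*(g'-\langle g'\rangle_Q)$ whose oscillation is only $O(2^{-k_0}k_0\|g'\|_{BMO})$; one must dilate the bump by $2^{k_0}$, absorb these errors into the left-hand side, and for that one needs $\|g'\|_{BMO}<\infty$ a priori, which forces the truncation $g'=g_{i,N}$ and a final Banach--Alaoglu limiting argument. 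Also, because $\int\rho\neq 0$ your $\hat\rho$ does not vanish near the origin, so the low-frequency terms $\varphi_{2^{-j}}*f$ for $j<j_0$ do not vanish (the paper's annulus-supported test function kills them exactly); they can be controlled by the $\alpha=0$ bound from Case (I), but this must be said. As written, Cases (II) ($p<2$) and (III) are not proofs.
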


In the above, the upper bounds for the operator norm of $\mathcal{S}_{g,\varphi}$ follow directly from Theorem (B) and the above discussion, and it remains only to prove the lower bounds. As we mentioned before, here we are dealing with some error terms that, at the end of the proofs, have to be absorbed into the left-hand side. To this aim, we bring the following simple lemma.
\begin{lemma}\label{convolutionelemma}
	Let $\phi$ be a Schwartz function, $g \in \bmorn$, and $Q$ a cube. Then we have
	\[
	|\phi_t*(g - \avr{g}{Q})|(x) \lesssim (1 + |\log t l(Q)^{-1}|) \bmornn{g}, \quad x \in Q, \quad t > l(Q).
	\]
\end{lemma}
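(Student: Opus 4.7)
The plan is to decompose the convolution integral into dyadic annular pieces centered at $x$ and to balance the rapid decay of the Schwartz kernel $\phi_t$ against the (at most) logarithmic growth of $\bmorn$-averages over nested balls. Concretely, set $B_k := B_{2^k t}(x)$ for $k\ge 0$ and split
\[
\phi_t*(g-\avr{g}{Q})(x)=\int_{B_0}\phi_t(x-y)(g(y)-\avr{g}{Q})\,dy+\sum_{k=0}^{\infty}\int_{B_{k+1}\setminus B_k}\phi_t(x-y)(g(y)-\avr{g}{Q})\,dy.
\]
On the annulus $B_{k+1}\setminus B_k$ the Schwartz decay of $\phi$ yields, for any integer $N$, the bound $|\phi_t(x-y)|\lesssim_N t^{-n}2^{-kN}$, while on $B_0$ the trivial bound $|\phi_t(x-y)|\lesssim t^{-n}$ is enough.

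Next, I would estimate the local average
\[
\fint_{B_{k+1}}|g-\avr{g}{Q}|\le\fint_{B_{k+1}}|g-\avr{g}{B_{k+1}}|+|\avr{g}{B_{k+1}}-\avr{g}{Q}|.
\]
The first summand is $\lesssim\bmornn{g}$ by the very definition of $\bmorn$ (after replacing the ball by its enclosing cube at the cost of a dimensional constant). For the second, since $x\in Q$ and $t>l(Q)$, the ball $B_{k+1}$ contains $Q$, and a telescoping chain of doublings connecting $Q$ to $B_{k+1}$ — together with the elementary one-step estimate $|\avr{g}{B}-\avr{g}{B'}|\lesssim\bmornn{g}$ whenever $B\subset B'$ and $|B'|\lesssim |B|$ — gives
\[
|\avr{g}{B_{k+1}}-\avr{g}{Q}|\lesssim\bigl(k+1+\log(t/l(Q))\bigr)\bmornn{g}.
\]

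Combining these and using $|B_{k+1}|\lesssim (2^k t)^n$, the $k$-th annular piece contributes $\lesssim 2^{-k(N-n)}\bigl(k+1+\log(t/l(Q))\bigr)\bmornn{g}$, while the $B_0$ piece contributes the same bound with $k=0$. Choosing any $N>n$, say $N=n+1$, the geometric factor $2^{-k(N-n)}$ makes the sum over $k$ convergent and equal to $\lesssim (1+\log(t/l(Q)))\bmornn{g}$; since $t>l(Q)$, $\log(t/l(Q))=|\log(t l(Q)^{-1})|$, which is exactly the claimed bound. The only point requiring a little care is the iterated-doubling estimate comparing the cube average $\avr{g}{Q}$ with the ball average $\avr{g}{B_{k+1}}$, to make sure that it loses only a factor $O(k+\log(t/l(Q)))$ and that the transition from cube to enclosing ball is absorbed into a constant — both classical $\bmorn$ facts.
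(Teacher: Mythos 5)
Your argument is correct and is essentially the paper's own proof with slightly different bookkeeping: the paper expands over the dilates $2^{k+1}Q\setminus 2^kQ$ starting at scale $l(Q)$ and extracts the logarithm from the dominant term of the resulting series, while you expand over balls $B_{2^kt}(x)$ starting at scale $t$ and put the logarithm into the telescoping comparison $|\avr{g}{B_{k+1}}-\avr{g}{Q}|$. Both rest on the same two ingredients — Schwartz decay of $\phi_t$ over dyadic annuli and the iterated doubling estimate for $BMO$ averages — so no further comment is needed.
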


\begin{proof}
	Since $\phi$ is a Schwartz function, we may assume
	\[
	|\phi(y)| \le \frac{C(\phi)}{(1 + |y|)^{n + \delta}}, \quad y \in \rn, \quad \delta > 0.
	\]
	Now, we have
	\begin{align*}
		&|\phi_t*(g - \avr{g}{Q})|(x)\lesssim \int_{\rn} \frac{t^{-n}}{(1 + |\frac{y - x}{t}|)^{n + \delta}} |g(y) - \avr{g}{Q}| dy\\
		&\lesssim \int_{Q} t^{-n} |g(y) - \avr{g}{Q}|dy
		+ \sum_{k \ge 0} \int_{2^{k+1}Q \backslash 2^kQ} \frac{t^{-n}}{(1 + t^{-1} l(Q) 2^k)^{n + \delta}} |g(y) - \avr{g}{Q}|dy,
	\end{align*}
	where in the above we used the fact that for $x \in Q$ and $y \in 2^{k+1}Q\backslash 2^kQ$ we have 
	\[
	|x - y| \simeq 2^k l(Q).
	\]
	Next, we note that
	\[
	\int_{2^{k+1}Q} |g(y) - \avr{g}{Q}| \lesssim \bmornn{g} (k + 1) |2^{k+1}Q|, \quad k \ge 0,
	\]
	which, after plugging in the above, gives us
	\begin{equation}\label{hhhhhh}
		|\phi_t*(g - \avr{g}{Q})|(x) \lesssim \bmornn{g} (t^{-1} l(Q))^n \left(1 + \sum_{k \ge 0} \frac{2^{kn} k}{(1 + t^{-1} l(Q) 2^k)^{n + \delta}}\right).
	\end{equation}
	Then, we estimate the last sum as
	\[
	\sum_{k \ge 0} \frac{2^{kn} k}{(1 + t^{-1} l(Q) 2^k)^{n + \delta}} \le \sum_{t^{-1} l(Q) 2^k \le 1} 2^{kn} k + \sum_{t^{-1} l(Q) 2^k > 1} \frac{2^{-k \delta} k}{(t^{-1} l(Q))^{n + \delta}}.
	\]
	Noting that because of the geometric factor, each sum in the above is dominated by its largest term, we obtain
	\[
	\sum_{k \ge 0} \frac{2^{kn} k}{(1 + t^{-1} l(Q) 2^k)^{n + \delta}} \lesssim 1 + (t l(Q)^{-1})^n |\log t l(Q)^{-1}|,
	\]
	which, together with \eqref{hhhhhh} and our assumption that $t > l(Q)$, implies
	\[
	|\phi_t*(g - \avr{g}{Q})|(x) \lesssim \bmornn{g} (1 + |\log t l(Q)^{-1}|),
	\]
	and this completes the proof.
\end{proof}

We break the proof of Theorem \ref{continuousparp} into two parts, and first we prove \eqref{hpstarhpclaim} and \eqref{hphpclaim}.

\begin{proof}[Proofs of \eqref{hpstarhpclaim} and \eqref{hphpclaim}]
	Let $A = \|\mathcal{S}_{g,\varphi}\|_{\hprn \to L^{p^*}(\rn)}$, and let $c$ be such that
	\[
	\hat{\varphi}(\xi) = 1, \quad |\xi| \le c.
	\]
	We take a Schwartz function $f$ with
	\[
	\text{supp}(\hat{f}) \subseteq B_c \setminus B_{\frac{c}{2}}, \quad f(0) = \int \hat{f} = 1,
	\]
	and for $j \in \zed$ and $x_0 \in \rn$, consider the function $f_{j,x_0} = \tau^{x_0} \delta^{2^{-j}} f$, whose Fourier transform is supported in $B_{c2^j} \setminus B_{c2^{j-1}}$ and $f_{j,x_0}(x_0) = 1$. Now, since $\hat{\varphi}$ is equal to $1$ on $B_c$, we have $\varphi_{2^{-j}} * f_{j,x_0} = f_{j,x_0}$. Therefore, from the boundedness of $\mathcal{S}_{g,\varphi}$, it follows that
	\[
	\|\varphi_{2^{-j}} * f_{j,x_0} \Delta_j g\|_{L^{p^*}(\rn)} \le A \hprnn{f_{j,x_0}}.
	\]
Then, since $\hprnn{f_{j,x_0}} \simeq 2^{-j \frac{n}{p}}$, and $\varphi_{2^{-j}} * f_{j,x_0} = f_{j,x_0}$, we must have
	\[
	\|f_{j,x_0} \Delta_j g\|_{L^{p^*}(\rn)} \lesssim A 2^{-j \frac{n}{p}}.
	\]
	At this point, we note that the Fourier transform of the function $f_{j,x_0} \Delta_j g$ is supported in a ball of radius $\simeq 2^j$. Therefore, using the Plancherel-Polya-Nikolskij inequality \eqref{polyainequalitylplq}, we obtain
	\[
	\linftyrnn{f_{j,x_0} \Delta_j g} \lesssim 2^{j \frac{n}{p^*}} \|f_{j,x_0} \Delta_j g\|_{L^{p^*}(\rn)} \lesssim A 2^{j n (\frac{1}{p^*} - \frac{1}{p})} = A 2^{-j \alpha},
	\]
	and since $f_{j,x_0}(x_0) = 1$, we must have
	\[
	|\Delta_j g(x_0)| \lesssim A 2^{-j \alpha}, \quad x_0 \in \rn, \quad j \in \zed,
	\]
	which shows that
	\[
	\|g\|_{\ddot{\Lambda}^\alpha(\rn)} \simeq \sup_{j \in \zed} 2^{j \alpha} \linftyrnn{\Delta_j g} \lesssim A.
	\]
	This proves \eqref{hpstarhpclaim}. \\
	
	Now we proceed to the proof of \eqref{hphpclaim}. To this end, let $A = \|\mathcal{S}_{g,\varphi}\|_{\hprn \to \lprn}$, and choose a large integer $m$ such that the Fourier transforms of any two consecutive terms in
	\[
	\sum_{j \in m \zed + i} \varphi_{2^{-j}} *f \Delta_j(g), \quad 0 \le i < m,
	\]
	are at sufficiently large distances from each other. Also, for a fixed natural number $N$, let
	\begin{equation}\label{giNPiN}
		g_{i,N} = \sum_{\substack{j \in m \zed + i \\ |j| \le N}} \Delta_j g, \quad P_{i,N}(f) = \sum_{\substack{j \in m \zed + i \\ |j| \le N}} \varphi_{2^{-j}} * f \Delta_j g, \quad 0 \le i < m.
	\end{equation}
	From now on, we fix $i$, and for simplicity of notation, set $g' = g_{i,N}$ and $P = P_{i,N}$. We note that for a suitable choice of $\theta$, we have
	\[
	\Delta_l^\theta(g') = \delta_{j,l} \Delta_j g, \quad j \in m \zed + i, \quad l \in \zed,
	\]
	and thus we may replace $\Delta_j^\psi g$ with $\Delta_j^\theta(g')$. Therefore, we have
	\begin{equation}\label{g'Pf}
		g' = \sum_{j \in \zed} \Delta_j^\theta(g'), \quad P(f) = \sum_{j \in \zed} \varphi_{2^{-j}} * f \Delta_j^\theta(g'),
	\end{equation}
	and since there are only finitely many nonzero terms in the above expression, we have
	\[
	\|P\|_{\hprn \to \hprn} \simeq \|P\|_{\hprn \to \dot{H}^p(\rn)} \lesssim A.
	\]
	We now turn to estimating the mean oscillation of $g'$ over a cube. Before doing so, we must ensure that $g'$ belongs to $\bmorn$. To see this, we note that the previous argument for \eqref{hpstarhpclaim} still holds with $\alpha = 0$. Thus, $\linftyrnn{\Delta_j g} \lesssim A$, and since $g'$ is a finite sum of such terms, it is bounded as well and hence belongs to $\bmorn$.\\
	
So,	let $Q$ be a cube with $2^{-k} \le l(Q) < 2^{-k+1}$, and $x_0$ be its center. Also, let $k_0$ be a large number to be determined later, and take the function $h = f_{k - k_0, x_0}$, where $f$ is as in the previous case. Note that $\hat{h}$ is supported on $B_{c2^{k - k_0}} \setminus B_{c2^{k - k_0 - 1}}$. Then, since the Fourier transform of $\varphi_{2^{-j}}$ is equal to $1$ on $B_{c2^j}$ and vanishes outside of $B_{\textbf{\textit{a}}' 2^j}$, we conclude that for a sufficiently large choice of $m$ depending only on $c$ and $\textbf{\textit{a}}'$, we have
	\[
	\varphi_{2^{-j}} * h = h, \quad j \ge k - k_0, \quad \varphi_{2^{-j}} * h = 0, \quad j < k - k_0, \quad j \in m \zed + i.
	\]
	This observation implies that
	\[
	P(h) = h \sum_{k - k_0 \le j} \Delta_j^\theta(g') = h (g' - \sum_{j \le k - k_0 - 1} \Delta_j^\theta(g')) = h (g' - S_{k - k_0 - 1}^\theta(g')).
	\]
	Now, let $\Theta$ be the kernel function of the above partial sum operator. Then, since $\int \Theta = 1$, we may write
	\[
	g' - S_{k - k_0 - 1}^\theta(g') = g' - \avr{g'}{Q} - \Theta_{2^{-k + k_0 + 1}} * (g' - \avr{g'}{Q}),
	\]
	and thus we obtain
	\[
	P(h) = h \left(g' - \avr{g'}{Q} - \Theta_{2^{-k + k_0 + 1}} * (g' - \avr{g'}{Q}) \right).
	\]
	Also, since $h(x_0) = 1$, we can decompose the above sum as
	\[
	P(h) = g' - \avr{g'}{Q} - \Theta_{2^{-k + k_0 + 1}} * (g' - \avr{g'}{Q}) + (h - h(x_0)) \left(g' - \avr{g'}{Q} - \Theta_{2^{-k + k_0 + 1}} * (g' - \avr{g'}{Q}) \right),
	\]
	which is equivalent to
	\begin{align*}
		&g' - \avr{g'}{Q}= P(h) + \Theta_{2^{-k + k_0 + 1}} * (g' - \avr{g'}{Q}) - (h - h(x_0)) (g' - \avr{g'}{Q}) \\
		&+ (h - h(x_0)) \Theta_{2^{-k + k_0 + 1}} * (g' - \avr{g'}{Q}) = P(h) + E_1 - E_2 + E_3.
	\end{align*}
	This implies that
	\begin{equation}\label{mean oscilationg}
		\text{osc}_p(g', Q) \lesssim \text{osc}_p(P(h), Q) + \sum_{i=1}^{3} \text{osc}_p(E_i, Q),
	\end{equation}
	where
	\[
	E_1 = \Theta_{2^{-k + k_0 + 1}} * (g' - \avr{g'}{Q}), \quad E_2 = (h - h(x_0)) (g' - \avr{g'}{Q}), \quad E_3 = (h - h(x_0)) \Theta_{2^{-k + k_0 + 1}} * (g' - \avr{g'}{Q}).
	\]
	Now we estimate the $p$-mean oscillation of each term. For the first term, we have
	\[
	\text{osc}_p(P(h), Q) \lesssim |Q|^{-\frac{1}{p}} \lprnn{P(h)} \lesssim 2^{k \frac{n}{p}} \hprnn{P(h)} \lesssim A 2^{k \frac{n}{p}} \hprnn{h}.
	\]
	Then, since $\hprnn{h} \simeq 2^{\frac{n(k_0 - k)}{p}}$, which follows from the fact that $h = \tau^{x_0} \delta^{2^{k_0 - k}}(f)$, the above inequality implies
	\begin{equation}\label{phestiamte}
		\text{osc}_p(P(h), Q) \lesssim 2^{\frac{n k_0}{p}} A.
	\end{equation}
	For the second term, we have
	\[
	\text{osc}_p(E_1, Q) \lesssim l(Q) \sup_{x \in Q} \left| \nabla \Theta_{2^{-k + k_0 + 1}} * (g' - \avr{g'}{Q})(x) \right|,
	\]
	where we used the mean value theorem. Since for any $t > 0$, $\nabla \Theta_t = t^{-1} (\nabla \Theta)_t$, we may write
	\begin{equation}\label{previousinequality}
		\text{osc}_p(E_1, Q) \lesssim 2^{k - k_0 - 1} l(Q) \sup_{x \in Q} \left| (\nabla \Theta)_{2^{-k + k_0 + 1}} * (g' - \avr{g'}{Q})(x) \right|,
	\end{equation}
	and by applying Lemma \ref{convolutionelemma} to $\nabla \Theta$, we get
	\[
	\sup_{x \in Q} \left| (\nabla \Theta)_{2^{-k + k_0 + 1}} * (g' - \avr{g'}{Q})(x) \right| \lesssim \left(1 + \log 2^{-k + k_0 + 1} l(Q)^{-1} \right) \bmornn{g'},
	\]
	which, noting that $l(Q) \simeq 2^{-k}$, together with \eqref{previousinequality}, gives
	\begin{equation}\label{E1estimate}
		\text{osc}_p(E_1, Q) \lesssim 2^{-k_0} k_0 \bmornn{g'}.
	\end{equation}
	To estimate the third term, we note that
	\[
	\text{osc}_p(E_2, Q) \lesssim \avr{|E_2|^p}{Q}^{\frac{1}{p}} \le \sup_{x \in Q} |h(x) - h(x_0)| \text{osc}_p(g', Q) \lesssim l(Q) \sup_{x \in Q} |\nabla h(x)| \text{osc}_p(g', Q),
	\]
	and since $\linftyn{\nabla h} \simeq 2^{k - k_0}$, we obtain
	\begin{equation}\label{E2estimate}
		\text{osc}_p(E_2, Q) \lesssim 2^{-k_0} \text{osc}_p(g', Q) \lesssim 2^{-k_0} \bmornn{g'}.
	\end{equation}
	Finally, for the last term, we have
	\[
	\text{osc}_p(E_3, Q) \le2 \sup_{x \in Q} |h(x) - h(x_0)| \sup_{y \in Q} \left| \Theta_{2^{-k + k_0 + 1}} * (g' - \avr{g'}{Q})(y) \right|,
	\]
	which, using the mean value theorem for $h$ and applying Lemma \ref{convolutionelemma}, implies
	\begin{equation}\label{E3estimate}
		\text{osc}_p(E_3, Q) \lesssim l(Q) 2^{k - k_0} \left(1 + \log 2^{-k + k_0 + 1} l(Q)^{-1} \right) \bmornn{g'} \lesssim 2^{-k_0} k_0 \bmornn{g'}.
	\end{equation}
	Putting \eqref{mean oscilationg}, \eqref{phestiamte}, \eqref{E1estimate}, \eqref{E2estimate}, and \eqref{E3estimate} together, we obtain
	\[
	\text{osc}_p(g', Q) \lesssim 2^{\frac{n k_0}{p}} A + k_0 2^{-k_0} \bmornn{g'},
	\]
	and taking the supremum over all cubes gives
	\[
	\bmornn{g'} \lesssim 2^{\frac{n k_0}{p}} A + k_0 2^{-k_0} \bmornn{g'}.
	\]
Now, by choosing $k_0$ large enough and noting that we already know $g'$ belongs to $\bmorn$, we conclude that $\bmornn{g'} \lesssim A$. Finally, recall that
	\[
	g' = g_{i,N} = \sum_{\substack{j \in m \zed + i \\ |j| \le N}} \Delta_j g,
	\]
	and we have
	\[
	g_N = \sum_{|j| \le N} \Delta_j g = \sum_{0 \le i < m} g_{i,N}.
	\]
	Thus, this sequence must be bounded in $\bmorn$, and the Banach-Alaoglu theorem implies that there exists a subsequence converging in the weak* topology of $\bmorn$ to a function $G$ with $\bmoronen{G} \lesssim A$. Since the sequence $g_N$ converges in the space of distributions modulo polynomials to $g$, we conclude that $g$ is equal to $G$ modulo a polynomial, which means that there exists a polynomial $U$ such that
	\[
	\bmornn{g - U} \lesssim A,
	\]
	and this proves \eqref{hphpclaim}.
\end{proof}

\begin{remark}
	We note that the boundedness of the operator $\mathcal{S}_{g,\varphi}$ on $L^2(\rn)$ is equivalent to the statement that the measure
	\[
	d\mu(x,t) = \sum_{j \in \zed} |\Delta_j g|^2 \, dx \, d\delta_{2^{-j}}(t),
	\]
	is a Carleson measure. For $p = 2$, our result can be rephrased as: if the above measure is Carleson, then $g$, belongs to $\dot{BMO}(\rn)$. As far as we know, the previous proof of this fact uses the assumption on $g$, and directly shows that modulo a polynomial $g$ lies in the dual of $\honern$. Fefferman's duality theorem then implies that $g \in \dot{BMO}(\rn)$\cite{MR1232192} (p. 161). However, here we only used the fact that, to estimate the $\bmorn$ norm, we may use any $p$-mean oscillation, which follows from the John-Nirenberg lemma, and the fact that every bounded sequence in $\bmorn$ has a subsequence converging in the topology of distributions modulo polynomials.
\end{remark}

We now proceed to the proof of \eqref{hphqclaim}, and in order to do so, we need a series of lemmas.
\begin{lemma}\label{testfunctionlemma}
Let $\varphi$ be a Schwartz function with $\int \varphi = 1$, and let $B_1$ be the unit ball in $\rn$. Then, for $\alpha \ge 2$ and $0 < p <\infty$, there exists a function $\tilde{\chi}$ supported in a ball with radius $c(\alpha, p, \varphi)$ such that
\begin{align}
	&|\varphi_t * \tilde{\chi}| > \frac{1}{3} \chi_{B_1}, \quad t \le \alpha \label{chitildisbig},\\
	&\hprnn{\tilde{\chi}} \le c'(\varphi, \alpha, p). \label{atombound}
\end{align}
\end{lemma}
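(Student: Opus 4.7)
The plan is to build $\tilde\chi$ as the indicator of a large ball modified by a small, far-away correction that supplies the vanishing moments required for membership in $H^p$. First, I fix $R_0=R_0(\alpha,\varphi)$ large enough that for every $x\in B_1$ and every $t\le\alpha$, a change of variables gives
\begin{equation*}
\varphi_t*\chi_{B_{R_0}}(x)\;=\;\int_{B_{R_0/t}(x/t)}\varphi\;\ge\;1-\int_{|z|\ge (R_0-1)/\alpha}|\varphi(z)|\,dz\;\ge\;\tfrac{9}{10},
\end{equation*}
which is possible because $\varphi$ is Schwartz with $\int\varphi=1$. Thus $\chi_{B_{R_0}}$ already satisfies \eqref{chitildisbig} with generous margin.

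Next, to secure \eqref{atombound}, set $N=\max\{0,\lfloor n(1/p-1)\rfloor\}$, so that $H^p$-atoms in dimension $n$ need vanishing moments of all orders $\le N$. Fix a family of smooth unit bumps $\{\eta_\beta\}_{|\beta|\le N}$ supported in disjoint unit balls centered at points $x_\beta$ lying in the annulus $\{R\le|x|\le R+1\}$, for a large radius $R\gg R_0$ to be chosen later; arrange the $x_\beta$ so that the moment matrix $\bigl(\int y^{\beta'}\eta_\beta(y)\,dy\bigr)_{|\beta|,|\beta'|\le N}$ is invertible with operator norm of its inverse bounded polynomially in $R$ (this is a standard multivariate Vandermonde argument). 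Solving a finite linear system then produces coefficients $a_\beta=a_\beta(R_0,R,n,p)$ of at most polynomial size in $R$ such that
\begin{equation*}
\tilde\chi\;:=\;\chi_{B_{R_0}}\;-\;\sum_{|\beta|\le N}a_\beta\,\eta_\beta
\end{equation*}
has vanishing moments of all orders $\le N$ and is supported in $B_{R+1}$.

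It remains to verify the three properties. The support condition holds with $c(\alpha,p,\varphi)=R+1$. For \eqref{chitildisbig}, each correction $a_\beta\eta_\beta$ is supported at distance $\ge R-1$ from $B_1$, so $|\varphi_t*(a_\beta\eta_\beta)(x)|\lesssim |a_\beta|\sup_{|y|\ge R-1}|\varphi_t(y)|$, and the Schwartz decay of $\varphi$ with $t\le\alpha$ makes this quantity decay faster than any inverse power of $R$; choosing $R$ large enough in terms of $\alpha,p,\varphi$ brings the total correction below $1/10$, which yields $\varphi_t*\tilde\chi>9/10-1/10>1/3$ on $B_1$. For \eqref{atombound}, $\tilde\chi$ is bounded in $L^\infty$ (the bound depending only on $\alpha,p,\varphi$), is supported in $B_{R+1}$, and has vanishing moments of all orders $\le N$, hence a constant multiple of an $H^p$-atom with constant depending only on $\alpha,p,\varphi$.

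The main obstacle is to keep the correction coefficients $a_\beta$ sufficiently under control that they neither destroy the pointwise bound in \eqref{chitildisbig} nor blow up the $L^\infty$ and atomic norm in \eqref{atombound}. The resolution is quantitative: the $a_\beta$ grow at most polynomially in $R$ (from the inverse of the moment matrix), while the super-polynomial Schwartz decay of $\varphi_t$ for $t\le\alpha$ comfortably outruns this growth, so a sufficiently large choice of $R$, depending only on $\alpha,p,\varphi$, balances everything.
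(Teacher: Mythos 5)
Your proof is correct and follows essentially the same route as the paper's: the indicator of a large ball secures the lower bound $\varphi_t*\tilde{\chi}\gtrsim 1$ on $B_1$, and a far-away correction with coefficients growing only polynomially in the distance supplies the vanishing moments, the super-polynomial decay of $\varphi$ then absorbing the correction's contribution near the origin. The only (cosmetic) difference is that the paper realizes the correction as a polynomial times the indicator of a single distant unit ball rather than a linear combination of bumps in several distant balls.
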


\begin{proof}
First, note that if $\varphi$ is complex valued then the integral of its real part is $1$, and therefore without loss of generality we may assume that $\varphi$ is real valued. Then, we construct an atom with a large amount of cancellation that satisfies the required conditions. To this aim, pick a large number $M$ such that
	\[
	\int_{|x|\ge M}|\varphi|\le \frac{1}{3}, \quad M>\frac{\alpha}{2},
	\]
	which implies that for $t \le \alpha$, we have
	\[
	\int_{|x|\ge \alpha M}|\varphi_t|(x) \, dx = \int_{|x|\ge \frac{\alpha}{t}M}|\varphi|(x) \, dx \le \frac{1}{3}.
	\]
Also, for $x \in B_1$, we have
\[
\varphi_t * \chi_{B_{\alpha M}}(x) = \int_{\rn} \varphi_t(x - y) \chi_{B_{\alpha M}}(y) \, dy = \int_{B_{\alpha M}(x)} \varphi_t(z) \, dz = 1 - \int_{B_{\alpha M}^c(x)} \varphi_t(z) \, dz,
\]
and since $B_{\alpha M}^c(x) \subseteq B_{M}^c(0)$ (because $\alpha \ge 2$), we have
\begin{equation}\label{greaterthantwothird}
	\varphi_t * \chi_{B_{\alpha M}}(x) \ge \frac{2}{3}, \quad x \in B_1.
\end{equation}
Now, take a natural number $N > n(\frac{1}{p} - 1)$, and let $B'$ be a ball of radius $1$ at distance $D$ from the origin, say $B' = B_1(2De)$ for some unit vector $e$. Then, we choose $P$, a polynomial of degree at most $N$, and set
\[
\tilde{\chi}(x) = \chi_{B_{\alpha M}}(x) + P(x) \chi_{B'}(x).
\]
To make $\tilde{\chi}$ into an atom, we need to find $P$ such that
\begin{equation}\label{Cancellation}
	\int \tilde{\chi}(x) Q(x) \, dx = 0,\quad Q\in \mathbb{P}_N,
\end{equation}
where $\mathbb{P}_N$ is the space of real valued polynomials in $n$ variables with degree no more than $N$. Additionally, $P$ has to be chosen such that $P\chi_{B'}$ has a small contribution to $\varphi_t * \tilde{\chi}$ on the unit ball, meaning that
\begin{equation}\label{lessthanthird}
	|\varphi_t * (P \chi_{B'})(x)| \le \frac{1}{3}, \quad  x\in B_1, \quad t\le \alpha.
\end{equation}
To achieve the first task, consider the inner product on $\mathbb{P}_N$, defined as 
\[
\left\langle Q_1, Q_2\right\rangle:= \int_{B_1}Q_1(x)Q_2(x)\, dx,
\]
and pick an orthonormal basis $\{Q_\beta\}_{|\beta|\le N}$ with respect to this inner product. Then, translate these polynomials to the center of $B'$, and set
\begin{equation}\label{coefqprime}
	Q_\beta'=\tau^{2De}Q_\beta, \quad |\beta|\le N,
\end{equation}
which gives an orthonormal basis for $\mathbb{P}_N$, equipped with the new inner product
\[
\left\langle Q_1, Q_2\right\rangle':= \int_{B'}Q_1(x)Q_2(x)\, dx.
\]
Now, the cancellation condition \eqref{Cancellation} becomes
\[
\left\langle P, Q_\beta'\right\rangle'= -\int_{B_{\alpha M}}Q_\beta', \quad |\beta|\le N,
\]
which has a unique solution $P$ defined as
\[
P(x)= \sum_{|\beta|\le N} \left\langle P, Q_\beta'\right\rangle' Q_\beta'(x)=\sum_{|\beta| \le N} c_\beta x^{\beta}.
\]
Then, to see why \eqref{lessthanthird} holds note that \eqref{coefqprime} implies that for each $\beta$, the coefficients of $Q_\beta'$, grow no more than a constant (which depends only on $n$, and $N$) times $D^N$, and therefore
\[
|\left\langle P, Q_\beta'\right\rangle'|\le \int_{B_{\alpha M}}|Q_\beta'|\le C(n, N, \alpha, M)D^N,
\]
which implies that
\[
|c_\beta| \le C'(n,N,,\alpha,M) D^{2N}, \quad D > 1, \quad |\beta|\le N.
\]
On the other hand, the Schwartz function $\varphi$ decays faster than any polynomial, so for a constant $C(\varphi, N)$, we have
\[
|\varphi(x)| \le C(\varphi, N) \frac{1}{(1 + |x|)^{3N + n}}, \quad x\in\rn.
\]
Now, for $x \in B_1$ we write
\[
|\varphi_t * (P \chi_{B'})(x)| \le \int_{B'} |\varphi_t(x - y)| |P(y)| \, dy,
\]
and we note that since $x \in B_1$ and $y \in B'$, we have $|x - y| \simeq D$. Also, since $P$ is a polynomial of degree at most $N$, we have
\[
|P(y)| \le C(n,N, \alpha, M) D^{3N}, \quad y \in B',
\]
which implies that
\[
|\varphi_t * (P \chi_{B'})(x)| \le \int_{B'} \varphi_t(x - y) |P(y)| \, dy \le C(n, \varphi, N, \alpha, M) D^{3N} t^{-n} \int_{B'} \frac{1}{|t^{-1} D|^{3N + n}}.
\]
Then, it is enough to recall that $|B'|=1$, and $t \le \alpha$ to obtain
\[
|\varphi_t * (P \chi_{B'})(x)| \le C(n,\varphi, N, \alpha, M) D^{-n}, \quad x\in B_1.
\]
Now, we choose $D$ to be sufficiently large so that the right-hand side of the above inequality is less than $\frac{1}{3}$. This shows that we can find $P$ such that \eqref{lessthanthird} holds, which, together with \eqref{greaterthantwothird}, implies that $\tilde{\chi}$ satisfies \eqref{chitildisbig}. Finally, since $\tilde{\chi}$ satisfies \eqref{Cancellation} and is a bounded function supported in a ball with a radius depending only on $\varphi$, $p$, $\alpha$, and $n$, we conclude
\[
\hprnn{\tilde{\chi}} \le C(\varphi, p, \alpha, n),
\]
which completes the proof.
\end{proof}
We continue by proving the following lemma, which is designed to verify some a priori bounds.
\begin{lemma}\label{appriorilemma}
	Let $0<p,q,r<\infty$ with $\frac{1}{q}=\frac{1}{p}+\frac{1}{r}$, $\varphi$ a Schwartz function with $\int \varphi = 1$, and $u$, a smooth function with compact Fourier support. Also, assume
	\[
	\lqrnn{(\varphi*f)u} \le A \hprnn{f},
	\]
	holds for compactly supported functions $f$. Then $\lrrnn{u} < \infty$.
\end{lemma}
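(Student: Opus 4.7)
My plan is to test the given inequality against randomized lattice sums of dilated and translated copies of the test function from Lemma~\ref{testfunctionlemma}, and then exploit the bandlimited structure of $u$ via the sampling inequality of Theorem~\ref{polyainequality}(2) to recover $\lrrnn{u}$.

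Let $R$ denote the radius of the Fourier support of $u$, and pick $h=c_0/R$ small enough that Theorem~\ref{polyainequality}(2) applies to $u$ on $h\zn$. Using Lemma~\ref{testfunctionlemma} with $\alpha = 1/h$, I obtain a compactly supported $\tilde\chi$ with $\varphi_t*\tilde\chi \ge \tfrac{1}{3}\chi_{B_1}$ for $t \le \alpha$ and with $\hprnn{\tilde\chi}$ bounded by a constant depending only on $R,p,\varphi$. Fix an integer $L$ (depending only on $R$) large enough that, for the sublattice $\{x_k\}_{k\in L\zn}$, the supports of $f_{x_k} := \tau^{x_k}\delta^h\tilde\chi$ are pairwise disjoint. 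A direct scaling calculation then gives $\varphi * f_{x_k} \ge \tfrac{1}{3}\chi_{B_h(x_k)}$ and $\hprnn{f_{x_k}} = h^{n/p}\hprnn{\tilde\chi}$.

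For a finitely supported real sequence $\{a_k\}$ and independent Rademacher signs $\{\epsilon_k\}$, set $f = \sum_k \epsilon_k a_k f_{x_k}$. Applying the given inequality and averaging gives $\mathbb{E}\lqrnn{u(\varphi*f)}^q \le A^q\,\mathbb{E}\hprnn{f}^q$. On the left, Khintchine's inequality produces the square function $(\sum_k a_k^2|\varphi*f_{x_k}|^2)^{q/2}$, which by the disjointness of $\{B_h(x_k)\}$ together with $|\varphi*f_{x_k}| \ge \tfrac{1}{3}\chi_{B_h(x_k)}$ is bounded below pointwise by $3^{-q}\sum_k |a_k|^q \chi_{B_h(x_k)}$; integrating against $|u|^q$ yields $\mathbb{E}\lqrnn{u(\varphi*f)}^q \gtrsim \sum_k |a_k|^q \int_{B_h(x_k)}|u|^q$. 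On the right, the disjointness of the supports of $\{f_{x_k}\}$ makes $\hprnn{f}$ independent of the signs and bounded by $h^{n/p}\hprnn{\tilde\chi}(\sum_k|a_k|^p)^{1/p}$. Combining these estimates and taking the supremum over $\{a_k\}$ via the H\"older duality $\|\{a_k b_k\}\|_{\ell^q}\le \|\{a_k\}\|_{\ell^p}\|\{b_k\}\|_{\ell^r}$ (valid since $1/q = 1/p+1/r$) converts the inequality into
\[
\bigl\|\,\{\lqrnn{u\chi_{B_h(x_k)}}\}_{k\in L\zn}\,\bigr\|_{\ell^r} \lesssim A\,h^{n/p}.
\]

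To conclude, since $u$ is bandlimited at radius $R$ it is essentially constant on each $B_h(x_k)$, so $\lqrnn{u\chi_{B_h(x_k)}}\gtrsim h^{n/q}|u(x_k)|$, which gives $\|\{u(x_k)\}_{k\in L\zn}\|_{\ell^r} \lesssim A\,h^{-n/r}$. Running the same argument on each of the $O(L^n)$ shifts of $L\zn$ that together tile $\zn$ extends this bound to the full lattice $h\zn$, and Theorem~\ref{polyainequality}(2) converts the discrete $\ell^r$ norm into $\lrrnn{u}$, yielding the finite bound $\lrrnn{u}\lesssim A$ that the lemma claims. The main technical obstacle is that $\varphi*f_{x_k}$ is not compactly supported, so the cross terms in the deterministic sum $\sum a_k(\varphi*f_{x_k})$ cannot be controlled pointwise; the role of the random signs is precisely to replace these troublesome cross terms with the diagonal square function via Khintchine's inequality.
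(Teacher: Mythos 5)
Your strategy is essentially the one the paper uses: randomized lattice sums of translated/dilated copies of the test function from Lemma \ref{testfunctionlemma}, Khintchine's inequality to pass to the diagonal square function, converse H\"older (the paper instead makes the explicit choice $a_k^p=a_k^q\avr{|u|^q}{Q_k}$, which amounts to the same thing), and finally the sampling inequality \eqref{polyalittlelp}. The variations (rescaling $\tilde\chi$ to scale $h$ and thinning to a sublattice to force disjoint supports, rather than tolerating the bounded overlap of the $B_c(k)$ as the paper does) are harmless. One cosmetic misstatement: for $p\le 1$ disjoint supports do not make $\hprnn{f}$ independent of the signs, since the $H^p$ quasi-norm is not local; but you only need the sign-uniform upper bound, which follows from the $p$-triangle inequality for $H^p$ (and from genuine additivity of $\|\cdot\|_{L^p}^p$ when $p>1$), so this is not a gap.

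There is, however, a genuine gap in the last step. You fix the sample points $x_k$ in advance and claim that, because $u$ is bandlimited, it is ``essentially constant'' on $B_h(x_k)$, hence $\lqrnn{u\chi_{B_h(x_k)}}\gtrsim h^{n/q}|u(x_k)|$. Bernstein's inequality only gives $|u(x)-u(x_k)|\le h\,\|\nabla u\|_{\infty}\lesssim c_0\linftyrnn{u}$ on the ball, i.e.\ the oscillation of $u$ on $B_h(x_k)$ is small relative to the \emph{global} sup of $u$, not relative to $|u(x_k)|$; for those $k$ where $|u(x_k)|\ll\linftyrnn{u}$ (e.g.\ $u(x)=M\sin(R\langle x,e\rangle+\theta)$ with $M$ large and $x_k$ near a zero) the function is very far from constant on the ball compared to $|u(x_k)|$, and the asserted reverse inequality $\avr{|u|^q}{B_h(x_k)}\gtrsim|u(x_k)|^q$ for each \emph{individual} pre-specified ball does not follow from any estimate you have quoted. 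The fix is the one the paper uses: do not fix the sample points beforehand. Your H\"older step already yields $\bigl\|\{\avr{|u|^q}{Q_k}^{1/q}\}_k\bigr\|_{\ell^r}\lesssim A\,h^{-n/r}$; since $u$ is continuous, for each $k$ there is a point $x_k\in Q_k$ with $|u(x_k)|=\avr{|u|^q}{Q_k}^{1/q}$, and since \eqref{polyalittlelp} is valid for an \emph{arbitrary} choice of $x_k\in h(k+[0,1]^n)$, this a posteriori choice closes the argument without any claim about $u$ being locally constant.
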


\begin{proof}
After a rescaling, we may assume that $\text{supp}(\hat{u}) \subseteq B_1$. Our strategy is to use the second Plancherel-Polya-Nikolskij inequality \eqref{polyalittlelp} and show that for a sufficiently small choice of $h$, there exists a sequence $\{x_k\}_{k \in \zn}$ with
\begin{equation}\label{polya}
	\|\{u(x_k)\}\|_{l^r(\zn)} < \infty, \quad x_k \in h(k + [0,1]^n), \quad k \in \zn,
\end{equation}
which implies the claim once we apply the Plancherel-Polya-Nikolskij inequality \eqref{polyalittlelp}.
 In order to do this, fix $h$ and partition $\rn$ into cubes of the form
 \[
 Q_k = h(k + [0,1]^n), \quad k \in \zn,
 \]
 then let $\tilde{\chi}$ be the function provided by Lemma \ref{testfunctionlemma} with $\varphi$, $p$, and $\alpha = 2$. Now, for each cube $Q_k$, consider the function
 $f_k = \tau^{hk} \delta^{2\sqrt{n}} \tilde{\chi}$, which, for $h \le 1$, has the property that
 \[
 |\varphi * f_k| \gtrsim \chi_{B_{2\sqrt{n}}(hk)} \ge \chi_{Q_k}, \quad \hprnn{f_k} \lesssim 1, \quad |f_k| \lesssim \chi_{B_c(hk)},
 \]
 where $c$ is a large number provided by the above lemma.
 Next, choose an arbitrary collection of numbers $\{a_k\}_{k \in \zn}$ such that only finitely many of them are nonzero, and let $\{\epsilon_k = \pm 1\}_{k \in \zn}$ be a family of independent random variables with Bernoulli distribution. Then, for the function $f$ defined as
 \[
 f = \sum_{k \in \zn} \epsilon_k a_k f_k,
 \]
 we have
 \begin{equation}\label{khinchin}
 	\lqrnn{(\varphi * f)u}^q = \int \left|\sum_{k \in \zn} \epsilon_k a_k \varphi * f_k\right|^q |u|^q \le A^q \hprnn{f}^q.
 \end{equation}
 Now, for $0<p \le 1$, let us use sub-additivity and estimate the right-hand side as
\[
\hprnn{f} \le \left(\sum_{k \in \zn} |a_k|^p \hprnn{f_k}^p\right)^{\frac{1}{p}} \lesssim \|\{a_k\}_{k \in \zn}\|_{l^p(\zn)},
\]
and for $1 < p < \infty$ we estimate as
\[
\hprnn{f} \lesssim \lprnn{\sum_{k \in \zn} |a_k| \chi_{B_c(hk)}} \lesssim_h \|\{a_k\}_{k \in \zn}\|_{l^p(\zn)}.
\]
Therefore, from \eqref{khinchin} we must have
\[
\int \left|\sum_{k \in \zn} \epsilon_k a_k \varphi * f_k\right|^q |u|^q \lesssim_h A^q \|\{a_k\}_{k \in \zn}\|_{l^p(\zn)}^q.
\]
Now, taking the expectation we obtain
\[
\int \mathbb{E} \left|\sum_{k \in \zn} \epsilon_k a_k \varphi * f_k\right|^q |u|^q = \mathbb{E} \int \left|\sum_{k \in \zn} \epsilon_k a_k \varphi * f_k\right|^q |u|^q \lesssim_h A^q \|\{a_k\}_{k \in \zn}\|^q_{l^p(\zn)},
\]
and then applying Khintchine inequality gives us
\[
\int \left(\sum_{k \in \zn} |a_k \varphi * f_k|^2\right)^{\frac{q}{2}} |u|^q \lesssim_h A^q \|\{a_k\}_{k \in \zn}\|^q_{l^p(\zn)}.
\]
Now, we note that $|\varphi * f_k| \gtrsim \chi_{Q_k}$, and the cubes $Q_k$ are disjoint, which implies that
\[
\left(\sum_{k \in \zn} |a_k|^q \int_{Q_k} |u|^q\right)^{\frac{1}{q}} \lesssim_h A \|\{a_k\}_{k \in \zn}\|_{l^p(\zn)}.
\]
Then, we choose a natural number $N$, and set $\{a_k\}_{k\in\zed^n}$ to be
\[
a_k^p = a_k^q \avr{|u|^q}{Q_k}, \quad |k|\le N, \quad \text{otherwise} a_k=0,
\]
which after plugging into the above inequality, and letting $N$ goes to infinity implies that
\[
\left(\sum_{k\in\zed^n} \left(\avr{|u|^q}{Q_k}\right)^{\frac{r}{q}}\right)^{\frac{1}{r}} \lesssim_h A.
\]
Finally, since $u$ is continuous for any cube $Q_k$, there exists a choice of $x_k$ such that
\[
|u(x_k)| = \avr{|u|^q}{Q_k}^{\frac{1}{q}}, \quad x_k \in \zn,
\]
which shows that \eqref{polya} holds, and this completes the proof.
\end{proof}

In the next lemma, we find a sparse domination of the square function of the symbol of the operator, and as in the dyadic case, it is more convenient to introduce a notation. For a dyadic cube $R$ and $\alpha$, we set
\[
S_\alpha(g|R) := \left(\sum_{2^{-j} \le \alpha l(R)} |\Delta_j^\psi(g)|^2\right)^{\frac{1}{2}} \chi_R.
\]
Also, we use the convention that $2^{-\infty} = 0$.
\begin{lemma}\label{sparsedominationlemma}
	Let $g$ be a tempered distribution, $s > 0$, and $\alpha > \sqrt{n}$. Then, for a dyadic cube $Q_0$ and $0 < \eta < 1$, there exists $\mathcal{C}$, an $\eta$-sparse family of cubes in $Q_0$, with the following properties:\\
	
	For any $Q \in \mathcal{C}$, there exists $\lambda_Q \in \zed \cup \{-\infty\}$ such that
	\begin{align}
		& S_\alpha(g|Q_0)(x) \lesssim \sum_{Q \in \mathcal{C}} 2^{\lambda_Q} \chi_Q(x) + \alpha^{-1} \sum_{Q \in \mathcal{C}} \avr{M_{\nabla\psi}^*(g)^s}{Q}^{\frac{1}{s}} \chi_Q(x), \label{domintaionproperty}\\
		& |Q| \lesssim |\{S_\alpha(g|Q) \ge 2^{\lambda_Q - 1}\}|.  \label{secondmeasureproperty}
	\end{align}
\end{lemma}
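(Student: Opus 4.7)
The plan is to apply the general sparse domination framework of Theorem~\ref{operatorfree} to the family
\[
f_Q := S_\alpha(g|Q),\qquad f_{P,Q} := \biggl(\sum_{\alpha l(P) < 2^{-j} \le \alpha l(Q)} |\Delta_j^{\psi}(g)|^{2}\biggr)^{\!1/2}\!\chi_P,\qquad P\in\mathcal{D}(Q).
\]
On $P$ one has $f_{P,Q}^{2} = f_Q^{2} - f_P^{2}$, so the required pointwise bound $|f_{P,Q}| \le |f_Q| + |f_P|$ is automatic. The theorem then produces an $\eta$-sparse family $\mathcal{C}$ in $Q_0$ and the sparse domination
\[
S_\alpha(g|Q_0) \lesssim \sum_{Q\in\mathcal{C}}\gamma_Q\,\chi_Q,\qquad \gamma_Q = (f_Q\chi_Q)^{*}(c|Q|) + (m_Q^{\#}f)^{*}(c|Q|),\qquad c=\tfrac{1-\eta}{2^{n+2}},
\]
and it remains to split $\gamma_Q$ into the two sums in \eqref{domintaionproperty} and to extract \eqref{secondmeasureproperty}.

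The crux of the argument is to show that $(m_Q^{\#}f)^{*}(c|Q|)$ carries the gain $\alpha^{-1}$. Every Littlewood-Paley piece $\Delta_j(g)$ entering $f_{P,Q}$ satisfies $2^{-j} > \alpha l(P) > \sqrt{n}\,l(P)$, so its characteristic scale strictly exceeds the diameter of $P$ and $\Delta_j(g)$ is essentially constant on $P$. Quantitatively, writing $\nabla\Delta_j(g)(z) = 2^{j}(\nabla\psi)_{2^{-j}} * g(z)$ and noting that for any $z\in P$ and any reference point $x\in P$ we have $|z-x| \le \sqrt{n}\,l(P) < 2^{-j}$, the definition of the non-tangential maximal operator gives $|\nabla\Delta_j(g)(z)| \le 2^{j}M_{\nabla\psi}^{*}(g)(x)$ for every such $x$. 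Applying the mean value theorem and then the triangle inequality in $\ell^{2}$ over $j$, and finally taking the infimum of $M_{\nabla\psi}^{*}(g)$ over $P$, one obtains
\[
|f_{P,Q}(x)-f_{P,Q}(y)|^{2} \le n\,l(P)^{2}\Bigl(\inf_{w\in P}M_{\nabla\psi}^{*}(g)(w)\Bigr)^{\!2}\sum_{2^{-j} > \alpha l(P)} 2^{2j},
\]
and summing the geometric series, whose largest term is $\simeq (\alpha l(P))^{-2}$, yields
\[
\text{osc}(f_{P,Q},P) \lesssim \alpha^{-1}\inf_{w\in P}M_{\nabla\psi}^{*}(g)(w) \le \alpha^{-1}\avr{M_{\nabla\psi}^{*}(g)^{s}}{P}^{1/s}.
\]

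Taking the supremum over dyadic $P\subset Q$ containing $x$ gives $m_Q^{\#}f(x) \lesssim \alpha^{-1}N(x)$ where $N(x) := \sup_{x\in P\subset Q}\avr{M_{\nabla\psi}^{*}(g)^{s}}{P}^{1/s}$ is the $s$-th root of the localized dyadic maximal function of $M_{\nabla\psi}^{*}(g)^{s}$ on $Q$. Its weak $(1,1)$ bound gives $N^{*}(c|Q|) \lesssim \avr{M_{\nabla\psi}^{*}(g)^{s}}{Q}^{1/s}$, hence $(m_Q^{\#}f)^{*}(c|Q|) \lesssim \alpha^{-1}\avr{M_{\nabla\psi}^{*}(g)^{s}}{Q}^{1/s}$, which produces the second sum in \eqref{domintaionproperty}. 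For the first sum, set $\lambda_Q = -\infty$ if $(f_Q\chi_Q)^{*}(c|Q|)=0$, and otherwise pick $\lambda_Q\in\mathbb{Z}$ with $2^{\lambda_Q-1} < (f_Q\chi_Q)^{*}(c|Q|) \le 2^{\lambda_Q}$; the defining property of the non-increasing rearrangement then yields $c|Q| < |\{S_\alpha(g|Q) > 2^{\lambda_Q-1}\}|$, which is \eqref{secondmeasureproperty}. The only serious obstacle is the oscillation estimate in the second paragraph: it is precisely here that the hypothesis $\alpha > \sqrt{n}$ is used, and the factor $\alpha^{-1}$ that emerges is exactly what will allow the maximal-function error term to be absorbed in the subsequent argument for \eqref{hphqclaim}.
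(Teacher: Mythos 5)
Your proof is correct and follows essentially the same route as the paper's: the same choice of $f_Q$ and (up to the immaterial factor $\chi_P$ versus $\chi_Q$) of $f_{P,Q}$ fed into the operator-free sparse domination theorem, the same mean-value/non-tangential-maximal-function estimate producing the $\alpha^{-1}$ gain in $\text{osc}(f_{P,Q},P)$, and the same choice of $\lambda_Q$ via the non-increasing rearrangement. The only deviation is cosmetic: to control $(m_Q^{\#}f)^{*}(c|Q|)$ you pass through the localized dyadic maximal function and its weak $(1,1)$ bound, whereas the paper keeps the pointwise bound $m_Q^{\#}f(x)\lesssim\alpha^{-1}M_{\nabla\psi}^{*}(g)(x)$ and applies Chebyshev's inequality directly.
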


\begin{proof}
Using the notation of Theorem \ref{operatorfree}, for any dyadic cube $Q$ and $P \subseteq Q$, let
\[
f_Q = S_\alpha(g|Q), \quad f_{P,Q} = \left(\sum_{\alpha l(P) < 2^{-j} \le \alpha l(Q)} |\Delta^\psi_j(g)|^2 \chi_Q \right)^{\frac{1}{2}}.
\]
First, we note that, by Minkowski's inequality for the $l^2$ norm, the condition $|f_{P,Q}| \le |f_{P}| + |f_{Q}|$ holds. Then, it follows from Theorem \ref{operatorfree} that there exists an $\eta$-sparse family of dyadic cubes $\mathcal{C}$ such that
\[
f_Q(x) \lesssim \sum_{Q \in \mathcal{C}} \gamma_Q \chi_Q(x),
\]
where
\begin{equation}
	\gamma_Q = (f_Q \chi_Q)^*(\eta' |Q|) + (m^{\#}_Q f \chi_Q)^*(\eta' |Q|), \quad \eta' = \frac{1 - \eta}{2^{n+2}}.
\end{equation}
So, it is enough to choose numbers $\lambda_Q$, such that for all $Q\in\mathcal{C}$ \eqref{secondmeasureproperty} holds, and in addition
\[
(f_Q\chi_Q)^*(\eta'|Q|)\lesssim 2^{\lambda_Q}, \quad (m^{\#}_Qf\chi_Q)^*(\eta'|Q|)\lesssim \alpha^{-1}\avr{M_{\nabla\psi}^*(g)(x)^s}{Q}^\frac{1}{s},
\] 
and this proves the claim. First of all, if $(f_Q\chi_Q)^*(\eta'|Q|)=0$, we set $\lambda_Q=-\infty$, and then \eqref{secondmeasureproperty} trivially holds. Now suppose $(f_Q\chi_Q)^*(\eta'|Q|)\neq 0$, and let $\lambda_Q$ be the integer such that
\[
2^{\lambda_Q-1}<(f_Q\chi_Q)^*(\eta'|Q|)\le2^{\lambda_Q},
\]
then from the definition of non-increasing rearrangement we have
\[
\eta'|Q|\le |\{f_Q\chi_Q>2^{\lambda_Q-1}\}|,
\]
which shows that \eqref{secondmeasureproperty}, holds for $Q$ as well. Also, in both case we have
\[
(f_Q\chi_Q)^*(\eta'|Q|)\le 2^{\lambda_Q}.
\]
So, it remains to estimate $(m^{\#}_Qf\chi_Q)^*(\eta'|Q|)$. To this aim, recall that 
\[
m_Q^{\#}f(x)=\sup_{\substack{x\in P\\P\subseteq Q}}\text{osc}(f_{P,Q},P), \quad x\in Q,
\]
and fix $x\in Q$ and $P\subseteq Q$ with $x\in P$. Then for any $y,z\in P$, by the triangle inequality for the $l^2$ norm, we have
\begin{equation}\label{estimateforthediffiernt}
	|f_{P,Q}(y)-f_{P,Q}(z)|\le \left(\sum_{\alpha l(P)<2^{-j}\le \alpha l(Q)}|\Delta^\psi_j(g)(y)-\Delta^\psi_j(g)(z)|^2\right)^{\frac{1}{2}}.
\end{equation}
Now, for a fixed $j$, applying the mean value theorem gives us
\[
|\Delta^\psi_j(g)(y)-\Delta^\psi_j(g)(z)|\lesssim l(P)\sup_{w\in Q}|\nabla\Delta_j(g)(w)|,
\]
which, by recalling that 
\[
\Delta_jg(w)=\psi_{2^{-j}}*g(w), \quad \nabla \psi_{2^{-j}}= 2^j(\nabla\psi)_{2^{-j}},
\]
is equivalent to
\[
|\Delta^\psi_j(g)(y)-\Delta^\psi_j(g)(z)|\lesssim 2^jl(P)\sup_{w\in Q}|(\nabla\psi)_{2^{-j}}(g)(w)|.
\]
Also, since $|w-x|\le \sqrt{n}l(P)\le\sqrt{n}\alpha^{-1}2^{-j}$, it follows from the definition of the non-tangential maximal function that for $\alpha>\sqrt{n}$, we have
\[
|\Delta^\psi_j(g)(y)-\Delta^\psi_j(g)(z)|\lesssim 2^jl(P) M_{\nabla\psi}^*(g)(x).
\]
Now, we estimate \eqref{estimateforthediffiernt} as
\[
|f_{P,Q}(y)-f_{P,Q}(z)|\lesssim M_{\nabla\psi}^*(g)(x)l(P) \sum_{\alpha l(P)<2^{-j}}2^{j}\lesssim \alpha^{-1}M_{\nabla\psi}^*(g)(x),
\]
which proves that
\[
\text{osc}(f_{P,Q},P)\lesssim \alpha^{-1}M_{\nabla\psi}^*(g)(x),
\]
and after taking the supremum over $P\subseteq Q$ gives us
\[
m_Q^{\#}f(x)\lesssim \alpha^{-1}M_{\nabla\psi}^*(g)(x).
\]
Next, we note that for any function $h$ and $0<\lambda<1$, Chebyshev's inequality implies that
\[
h^*(\lambda|Q|)\le \lambda^{-\frac{1}{s}}\avr{|h|^s}{Q}^{\frac{1}{s}},
\]
which, together with the above estimate on $m_Q^{\#}f$, gives us
\[
(m^{\#}_Qf\chi_Q)^*(\eta'|Q|)\lesssim \alpha^{-1}\avr{|M_{\nabla\psi}^*(g)|^s}{Q}^{\frac{1}{s}},
\]
and this completes the proof.

\end{proof}
The last lemma that we need is the following well-known result, which we prove it here.
\begin{lemma}\label{saveragelemma}
	For \(0 < s < r < \infty\), a function \(h\), and an \(\eta\)-sparse collection of cubes \(\mathcal{C}\), we have
	\[
	\left\|\sum_{Q \in \mathcal{C}} \left(\avr{|h|^s}{Q}\right)^{\frac{1}{s}} \chi_Q \right\|_{L^r(\rn)} \lesssim \|h\|_{L^r(\rn)}.
	\]
\end{lemma}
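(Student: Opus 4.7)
The plan is to reduce the claim to a pointwise domination by the Hardy–Littlewood maximal function via the sparse structure, and then to apply $L^r$-boundedness of the $s$-maximal operator for $r>s$.

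First, I would invoke the upper bound in Lemma \ref{lemmasparse} with $a_Q=\avr{|h|^s}{Q}^{1/s}$, which gives
\[
\lrrnn{\sum_{Q\in\mathcal{C}}\avr{|h|^s}{Q}^{1/s}\chi_Q}\lesssim \left(\sum_{Q\in\mathcal{C}}\avr{|h|^s}{Q}^{r/s}|Q|\right)^{1/r}.
\]
This converts the problem to bounding the discrete sum on the right. The merit of this step is that $r/s>1$, so the $L^r$-quasi-norm is now expressed as a genuine power sum rather than a quasi-norm.

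Next, I would exploit sparsity. For each $Q\in\mathcal{C}$, fix the set $E_Q\subset Q$ with $|E_Q|\ge \eta|Q|$ provided by the sparseness definition, so $|Q|\le \eta^{-1}|E_Q|$, and the $E_Q$'s are pairwise disjoint. For every $x\in E_Q\subset Q$ the pointwise bound $\avr{|h|^s}{Q}^{1/s}\le M_s(h)(x)$ holds, where $M_s(h)(x):=\sup_{x\in Q'}\avr{|h|^s}{Q'}^{1/s}$ is the $s$-maximal function. Raising to the $r$-th power and integrating over the disjoint $E_Q$'s gives
\[
\sum_{Q\in\mathcal{C}}\avr{|h|^s}{Q}^{r/s}|Q|\;\le\;\eta^{-1}\sum_{Q\in\mathcal{C}}\int_{E_Q}M_s(h)^r\;\le\;\eta^{-1}\int_{\rn}M_s(h)^r.
\]

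Finally, since $r>s$, the operator $M_s$ is bounded on $\lrrn$: indeed, $M_s(h)=M(|h|^s)^{1/s}$, and $M$ is bounded on $L^{r/s}(\rn)$ because $r/s>1$. Hence $\lrrnn{M_s(h)}\lesssim \lrrnn{h}$, and combining the three displays yields the desired inequality. No step here is particularly delicate; the only point to keep track of is the dependence on $\eta$, which enters once through Lemma \ref{lemmasparse} and once through the comparison $|Q|\le \eta^{-1}|E_Q|$, and is absorbed into the implicit constant.
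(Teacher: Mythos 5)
Your proof is correct and takes essentially the same route as the paper's: both rest on Lemma \ref{lemmasparse}, the pointwise domination $\avr{|h|^s}{Q}^{1/s}\le M(|h|^s)^{1/s}(x)$ exploited on the disjoint sets $E_Q$, and the $L^{r/s}$-boundedness of the Hardy--Littlewood maximal operator. The only difference is cosmetic --- you apply Lemma \ref{lemmasparse} before the maximal-function step rather than after --- and your passing remark that $r/s>1$ is what makes the first step work is unnecessary, since Lemma \ref{lemmasparse} holds for all exponents.
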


\begin{proof}
Let \( M \) be the cubic Hardy-Littlewood maximal operator. For \( Q \in \mathcal{C} \), let \( E_Q \) be the disjoint parts as in the definition of sparse families. Then we have
\[
\avr{|h|^s}{Q}^{\frac{1}{s}} \le M(|h|^s)^{\frac{1}{s}}(x), \quad x \in Q,
\]
which, after taking the \( r \)-average over \( E_Q \), gives us
\[
\avr{|h|^s}{Q}^{\frac{1}{s}} \le \avr{M(|h|^s)^{\frac{r}{s}}}{E_Q}^{\frac{1}{r}}.
\]
Now we have
\[
\left\|\sum_{Q \in \mathcal{C}} \avr{|h|^s}{Q}^{\frac{1}{s}} \chi_Q \right\|_{L^r(\rn)} \le \left\|\sum_{Q \in \mathcal{C}} \avr{M(|h|^s)^{\frac{r}{s}}}{E_Q}^{\frac{1}{r}} \chi_Q \right\|_{L^r(\rn)},
\]
which, after applying Lemma \ref{lemmasparse} and using the disjointness of the sets \( E_Q \), implies
\[
\left\|\sum_{Q \in \mathcal{C}} \avr{|h|^s}{Q}^{\frac{1}{s}} \chi_Q \right\|_{L^r(\rn)} \lesssim \left(\int M(|h|^s)^{\frac{r}{s}}  \right)^{\frac{1}{r}}.
\]
Finally, using the boundedness of \( M \) on \( L^{\frac{r}{s}}(\rn) \), we obtain
\[
\left\|\sum_{Q \in \mathcal{C}} \left(\avr{|h|^s}{Q}\right)^{\frac{1}{s}} \chi_Q \right\|_{L^r(\rn)} \lesssim \|h\|_{L^r(\rn)},
\]
which completes the proof.

\end{proof}
Now, we proof the third part of Theorem \ref{continuousparp}.
\begin{proof}[Proof of \eqref{hphqclaim}]
Let $A = \|\mathcal{S}_{g,\varphi}\|_{\hprn \to \lqrn}$. As in the proof of \eqref{hphpclaim}, consider the function $g_{i,N}$ and the operator $P_{i,N}$ as defined in \eqref{giNPiN}. For simplicity of notation, we use $g'$ and $P$ instead of $g_{i,N}$ and $P_{i,N}$, respectively, as in \eqref{g'Pf}. Furthermore, we note that, as in the previous case, $\|P\|_{\hprn \to \hqrn} \lesssim A$.
\\

We begin by showing that $g'$ belongs to $\hrrn$. Since it is a finite sum of terms $\Delta^\theta_j(g')$, it is enough to show that for each $j \in \zed$, the function $\Delta^{\theta}_j(g')$ belongs to $\lrrn$. To this aim, we note that the boundedness of the operator $P$ implies that
\[
\lqrnn{ (\varphi_{2^{-j}} * f)\Delta_j^{\theta}(g')} \le A \hprnn{f},
\]
holds for all compactly supported functions. Since $\Delta_j^{\theta}(g')$ has compact Fourier support and $\int \varphi = 1$, an application of Lemma \ref{appriorilemma} implies that $\Delta_j^{\theta}(g')$ belongs to $\lrrn$, and thus $\hrrnn{g'} < \infty$. Next, we fix a dyadic cube $Q_0$ and apply Lemma \ref{sparsedominationlemma} to $g'$ with $s < r$, $\eta = \frac{1}{2}$, and a large $\alpha$ which will be determined later. We then have a sparse collection of cubes $\mathcal{C}$ and numbers $\lambda_Q \in \zed \cup \{-\infty\}$ such that
\begin{align}
	&S_\alpha(g'|Q_0) \lesssim \sum_{Q \in \mathcal{C}} 2^{\lambda_Q} \chi_Q + \alpha^{-1} \sum_{Q \in \mathcal{C}} \avr{M_{\nabla \psi}^*(g')^s}{Q}^\frac{1}{s} \chi_Q \label{g'domintaionproperty},\\
	&|Q| \lesssim |\{S_\alpha(g'|Q) \ge 2^{\lambda_Q - 1}\}| \label{g'secondmeasureproperty}.
\end{align}
Now, \eqref{g'domintaionproperty} implies that
\begin{equation}\label{sg'lrnormfirstestimate}
	\lrrnn{S_\alpha(g'|Q_0)} \lesssim \lrrnn{\sum_{Q \in \mathcal{C}} 2^{\lambda_Q} \chi_Q(x)} + \alpha^{-1} \lrrnn{\sum_{Q \in \mathcal{C}} \avr{M_{\nabla \psi}^*(g')(x)^s}{Q}^\frac{1}{s} \chi_Q}.
\end{equation}
For the first term, we use Lemma \ref{lemmasparse} and get
\begin{equation}
	\lrrnn{\sum_{Q \in \mathcal{C}} 2^{\lambda_Q} \chi_Q(x)} \lesssim \left(\sum_{Q \in \mathcal{C}} 2^{r \lambda_Q} |Q|\right)^{\frac{1}{r}},
\end{equation}
and an application of Lemma \ref{saveragelemma} provides the following estimate for the second term:
\[
\lrrnn{\sum_{Q \in \mathcal{C}} \avr{M_{\nabla \psi}^*(g')(x)^s}{Q}^\frac{1}{s} \chi_Q} \lesssim \lrrnn{M_{\nabla \psi}^*(g')} \lesssim \hrrnn{g'},
\]
where, in the last line, we used the boundedness of non-tangential maximal functions on $\hrrn$. Putting the above two bounds together with \eqref{sg'lrnormfirstestimate}, we obtain
\begin{equation}\label{salphanorm}
	\lrrnn{S_\alpha(g'|Q_0)} \lesssim \left(\sum_{Q \in \mathcal{C}} 2^{r \lambda_Q} |Q|\right)^{\frac{1}{r}} + \alpha^{-1} \hrrnn{g'}.
\end{equation}
Now, we proceed to estimate the main term of the above inequality. To do this, let $\tilde{\chi}$ be the function provided by Lemma \ref{testfunctionlemma}. Then, for each cube $Q\in\mathcal{C}$, with center $c_Q$ let 
\[
\tilde{\chi}_Q(x)= \tilde{\chi}(\frac{x-c_Q}{2\sqrt{n}l(Q)})=\tau^{c_Q}\delta^{2\sqrt{n}l(Q)}\tilde{\chi}(x), \quad x\in\rn.
\]
Here, we summarize properties of the above function:	
\begin{align}
	&|\varphi_t*\tilde{\chi}_Q|\ge \frac{1}{3}\chi_Q, \quad 0<t\le\alpha l(Q),\label{thefirstpropertyoftild}\\
	&|\tilde{\chi}_Q|\lesssim \chi_{cQ},\label{thescondpropertioftild}\\
	&\hprnn{\tilde{\chi}_Q}\lesssim |Q|^{\frac{1}{p}}\label{thethirdpropertyoftild}.
\end{align}	
	To see the first property note that from Lemma \ref{testfunctionlemma}
	we have
	\[
	|\varphi_t*\tilde{\chi}_Q|=|\tau^{c_Q}\delta^{2\sqrt{n}l(Q)}(\varphi_{\frac{t}{2\sqrt{n}l(Q)}}*\tilde{\chi})|\ge\frac{1}{3}\chi_{B_{2\sqrt{n}l(Q)}(c_Q)}\ge\frac{1}{3}\chi_Q, \quad \frac{t}{2\sqrt{n}l(Q)}\le\alpha.
	\]
The second and third properties also follow from the properties of $\tilde{\chi}$ and from using dilation and translation. From now on, we fix a finite sub-collection of $\mathcal{C}$ and denote it by $\mathcal{C}'$.
 Then, take a sequence of independent Bernoulli random variables $\{\epsilon_Q = \pm 1\}_{Q \in \mathcal{C}'}$ and consider the following random function
 
\[
f_{\epsilon}=\sum_{Q\in\mathcal{C}'}\epsilon_Q2^{t\lambda_Q}\tilde{\chi}_Q, \quad t=\frac{r}{p}.
\]
The first thing to note is that
\begin{equation}\label{hpnormfepsilon}
	\hprnn{f_\epsilon}\lesssim \left(\sum_{Q\in\mathcal{C}'}2^{r\lambda_Q}|Q|\right)^{\frac{1}{p}}, \quad 0<p<\infty.
\end{equation}
	To see this, we note that for $0<p\le1$ sub-additivity implies
	\[
		\hprnn{f_\epsilon}\le \left(\sum_{Q\in\mathcal{C}'}2^{tp\lambda_Q}\hprnn{\tilde{\chi}_Q}^p\right)^{\frac{1}{p}}\lesssim \left(\sum_{Q\in\mathcal{C}'}2^{r\lambda_Q}|Q|\right)^{\frac{1}{p}},
	\]
where the last estimate follows from \eqref{thethirdpropertyoftild}. Also, for $1<p<\infty$, from \eqref{thescondpropertioftild} we have 
\[
\hprnn{f_\epsilon}\lesssim\lprnn{f_{\epsilon}}\lesssim\lprnn{\sum_{Q\in\mathcal{C}'}2^{t\lambda_Q}\chi_{cQ}},
\]
which after noting that the collection of concentric dilations $\{cQ: Q\in\mathcal{C'}\}$ is $c^{-n}\eta$-sparse, and using Lemma \ref{lemmasparse} implies that
\[
\hprnn{f_\epsilon}\lesssim \lprnn{\sum_{Q\in\mathcal{C}'}2^{t\lambda_Q}\chi_{cQ}}\lesssim \left(\sum_{Q\in\mathcal{C}'}2^{r\lambda_Q}|Q|\right)^{\frac{1}{p}},
\]	
which proves the claim.	Next, recall that in the defining expression of the operator $P$
\[
P(f)=\sum_{j\in\zed}\varphi_{2^{-j}}*f\Delta_j^\theta(g')=\sum_{\substack{j\in m\zed+i\\|j|\le N}}\varphi_{2^{-j}}*f\Delta_j^\psi(g),
\]
every two consecutive terms have Fourier support at sufficiently large distance from each other provided by the large magnitude of $m$. Therefore, if we take a sequence of independent Bernoulli random variables $\{\omega_j=\pm1\}_{j\in\zed}$ and modify the operator $P$ as
\[
P_\omega(h):=\sum_{\substack{j\in m\zed+i\\|j|\le N}}\omega_j\varphi_{2^{-j}}*h\Delta_j^\psi(g),
\]	
we still have
\[
S_\theta(P_\omega(h))(x)=\left(\sum_{\substack{j\in m\zed+i\\|j|\le N}}|\varphi_{2^{-j}}*h\Delta_j^\psi(g)|(x)^2\right)^\frac{1}{2}\le\mathcal{S}_{g,\varphi}(h)(x), \quad x\in\rn,
\]	
which implies that
\begin{equation}
	\|P_\omega\|_{\hprn\to\hqrn}\simeq\|P_\omega\|_{\hprn\to\dot{H}^q(\rn)}\lesssim A.
\end{equation}	
Now, for a fixed $\omega$ and $\epsilon$ we have
\[
P_\omega(f_\epsilon)=\sum_{j\in\zed}\Delta_j(g') \omega_j\varphi_{2^{-j}}*\sum_{Q\in\mathcal{C}'}\epsilon_Q2^{t\lambda_Q}\tilde{\chi}_Q=\sum_{Q\in\mathcal{C}'}\epsilon_Q2^{t\lambda_Q}\sum_{j\in\zed} \omega_j\varphi_{2^{-j}}*\tilde{\chi}_Q\Delta_j(g').
\]
Then, we get
\[
\lqrnn{P_\omega(f_\epsilon)}\le \hqrnn{P_\omega(f_\epsilon)}\le A\hprnn{f_\epsilon},
\]	
and from \eqref{hpnormfepsilon} we obtain
\[
\lqrnn{P_\omega(f_\epsilon)}\lesssim A \left(\sum_{Q\in\mathcal{C}'}2^{r\lambda_Q}|Q|\right)^{\frac{1}{p}},
\]	
or equivalently

\[
\int_{\rn} \left|\sum_{Q\in\mathcal{C}'}\epsilon_Q2^{t\lambda_Q}\sum_{j\in\zed} \omega_j\varphi_{2^{-j}}*\tilde{\chi}_Q(x)\Delta_j(g')(x)\right|^qdx \lesssim A^q \left(\sum_{Q\in\mathcal{C}'}2^{r\lambda_Q}|Q|\right)^{\frac{q}{p}}.
\]	
Taking expectation with respect to $\epsilon$ first, and using Khintchine inequality gives us
\[
\int_{\rn} \left(\sum_{Q\in\mathcal{C}'}2^{2t\lambda_Q}\left|\sum_{j\in\zed} \omega_j\varphi_{2^{-j}}*\tilde{\chi}_Q(x)\Delta_j(g')(x)\right|^2\right)^\frac{q}{2}dx \lesssim A^q \left(\sum_{Q\in\mathcal{C}'}2^{r\lambda_Q}|Q|\right)^{\frac{q}{p}},
\]	
and then taking expectation with respect to $\omega$ implies that
\[
\int_{\rn}\mathbb{E} \left(\sum_{Q\in\mathcal{C}'}2^{2t\lambda_Q}\left|\sum_{j\in\zed} \omega_j\varphi_{2^{-j}}*\tilde{\chi}_Q(x)\Delta_j(g')(x)\right|^2\right)^\frac{q}{2}dx \lesssim A^q \left(\sum_{Q\in\mathcal{C}'}2^{r\lambda_Q}|Q|\right)^{\frac{q}{p}}.
\]	
Now, let us call
\begin{equation}
	F(x)=\mathbb{E} \left(\sum_{Q\in\mathcal{C}'}2^{2t\lambda_Q}\left|\sum_{j\in\zed} \omega_j\varphi_{2^{-j}}*\tilde{\chi}_Q(x)\Delta_j(g')(x)\right|^2\right)^\frac{q}{2}, \quad x\in\rn,
\end{equation}	
then the above inequality is nothing but
\begin{equation}\label{lonenormf}
	\lonernn{F}\lesssim A^q \left(\sum_{Q\in\mathcal{C}'}2^{r\lambda_Q}|Q|\right)^{\frac{q}{p}}.
\end{equation}	
Our next task is to show that 
\begin{equation}\label{measuredensityoff}
	|Q|\lesssim |\{F\gtrsim 2^{r\lambda_Q}\}\cap Q|, \quad Q\in\mathcal{C'}.
\end{equation}	
In order to do this, we note that
\[
F(x)\ge 2^{tq\lambda_Q}\mathbb{E}\left|\sum_{j\in\zed} \omega_j\varphi_{2^{-j}}*\tilde{\chi}_Q(x)\Delta_j(g')(x)\right|^q,
\]	
which after using Khintchine inequality implies that
\[
F(x)\gtrsim 2^{tq\lambda_Q} \left(\sum_{j\in\zed} |\varphi_{2^{-j}}*\tilde{\chi}_Q(x)\Delta_j(g')(x)|^2\right)^\frac{q}{2}, \quad x\in \rn.
\]	
Also, from \eqref{thefirstpropertyoftild}
\[
|\varphi_{2^{-j}}*\tilde{\chi}_Q|\ge \frac{1}{3}\chi_Q, \quad 2^{-j}\le\alpha l(Q),
\]
we obtain
\[
F(x)\gtrsim 2^{tq\lambda_Q} \left(\sum_{2^{-j}\le\alpha l(Q)} |\Delta_j(g')(x)|^2\right)^\frac{q}{2} =2^{tq\lambda_Q} S_\alpha(g'|Q)(x)^q, \quad x\in Q.
\]
Now, recall \eqref{g'secondmeasureproperty} stating that
\[
|Q|\lesssim |\{S_\alpha(g'|Q)(x)\ge 2^{\lambda_Q-1}\}|,
\]
which together with the above inequality implies that
\[
|Q|\lesssim |\{F(x)\gtrsim 2^{q(t+1)\lambda_Q}\}\cap Q|.
\]	
Now, it is enough to note that 
\[
q(t+1)=r, \quad t=\frac{r}{p}, \quad \frac{1}{q}=\frac{1}{p}+\frac{1}{r},
\]	
which proves \eqref{measuredensityoff}. Having this inequality in hand, we can follow the same line of reasoning as in the dyadic case, which we will do now. First, let us partition cubes in $\mathcal{C'}$ as
\[
\mathcal{C'}_k=\{Q\in\mathcal{C'}: \lambda_Q=k\},\quad k\in\zed\cup\{-\infty\}.
\]
Then we have
\[
\sum_{Q\in\mathcal{C}'}2^{r\lambda_Q}|Q|=\sum_{k\in\zed}2^{kr}\sum_{Q\in\mathcal{C}'_k}|Q|.
\]		
So, if $\mathcal{C''}_k$ is the collection of maximal cubes in $\mathcal{C'}_k$, it follows from sparseness of $\mathcal{C'}_k$ that we have
\[
\sum_{k\in\zed}2^{kr}\sum_{Q\in\mathcal{C}'_k}|Q|\lesssim \sum_{k\in\zed}2^{kr}\sum_{Q\in\mathcal{C}''_k}|Q|,
\]	
which together with \eqref{measuredensityoff}, and noting that maximal cubes in $\mathcal{C''}_k$ are disjoint implies that
\[
\sum_{Q\in\mathcal{C}'}2^{r\lambda_Q}|Q|\lesssim \sum_{k\in\zed}2^{kr}\sum_{Q\in\mathcal{C}''_k}|\{F\gtrsim 2^{kr}\}\cap Q|\lesssim \sum_{k\in\zed}2^{kr}|\{F\gtrsim 2^{kr}\}|\lesssim \lonernn{F}.
\]	
Next, we use \eqref{lonenormf} and obtain
\[
\sum_{Q\in\mathcal{C}'}2^{r\lambda_Q}|Q|\lesssim A^q\left(\sum_{Q\in\mathcal{C}'}2^{r\lambda_Q}|Q|\right)^{\frac{q}{p}},
\]	
and noting that the right hand side is finite we get
\[
\left(\sum_{Q\in\mathcal{C}'}2^{r\lambda_Q}|Q|\right)^\frac{1}{r}\lesssim A,
\]	
and after taking the supremum over all finite sub-collections of $\mathcal{C}$ we get
\[
\left(\sum_{Q\in\mathcal{C}}2^{r\lambda_Q}|Q|\right)^\frac{1}{r}\lesssim A.
\]	
Then, we recall \eqref{salphanorm} and obtain
\[
\lrrnn{S_\alpha(g'|Q_0)}\lesssim A+\alpha^{-1}\hrrnn{g'}.
\]		
Now, we consider $2^n$ large dyadic cubes of the form $[0,\pm2^L]^n$, apply the above inequality to each of them and then after letting $L$ tends to infinity we finally get
\[
\lrrnn{S(g')}\lesssim A+\alpha^{-1}\hrrnn{g'}.
\]	
So, there exists a polynomial $U$ such that
\[
\hrrnn{g'-U}\lesssim \lrrnn{S(g')} \lesssim A+\alpha^{-1}\hrrnn{g'},
\]
however since we already showed that $g'\in\lrrn$, the polynomial $U$ must be zero and we conclude
\[
\hrrnn{g'}\lesssim A+\alpha^{-1}\hrrnn{g'}.
\]	
Now by choosing $\alpha$ large enough and noting that $\hrrnn{g'}$ is finite, we get $\hrrnn{g'}\lesssim A$. Then, recall that $g'=g_{i,N}$ and 
\[
g_N=\sum_{|j|\le N}\Delta_jg =\sum_{0\le i<m} g_{i,N},
\]
which implies that the sequence of functions
\[
S_N(g)=(\sum_{|j|\le N}|\Delta_jg|^2)^{\frac{1}{2}},
\]
is bounded in $\lrrn$, and thus Fatou lemma implies
\[
\lrrnn{S(g)}\lesssim A,
\]	
which means that there exists a polynomial $U'$ such that
\[
\hrrnn{g-U'}\lesssim A,
\] 	
and this prove \eqref{hphqclaim}, and completes the proof of Theorem \ref{continuousparp}.	
\end{proof}
\section{Acknowledgments}
I would like to thank the referees for valuable suggestions and comments on the manuscript.
\section{Declarations}
\subsection{Funding} Not applicable
\subsection{Conflict of interest}
On behalf of all authors, the corresponding author states that there is no conflict of interest. (Not applicable)
\subsection{Ethics approval and consent to participate} Not applicable
\subsection{Consent for publication} Not applicable
\subsection{Data availability} Not applicable
\subsection{Materials availability} Not applicable
\subsection{Code availability} Not applicable 

\begin{bibdiv}
\begin{biblist}
\bib{MR1934198}{article}{
	AUTHOR = {Auscher, P.},
	AUTHOR = {Hofmann, S.},
	AUTHOR = {Muscalu, C.},
	AUTHOR = {Tao, T.},
	AUTHOR = {Thiele, C.},
	TITLE = {Carleson measures, trees, extrapolation, and {$T(b)$} theorems},
	JOURNAL = {Publ. Mat.},
	VOLUME = {46},
	YEAR = {2002},
	NUMBER = {2},
	PAGES = {257--325},
	ISSN = {0214-1493,2014-4350},
}

\bib{MR2730492}{article}{
	AUTHOR = {B\'enyi, A.},
	AUTHOR = {Maldonado, D.},
	AUTHOR = {Nahmod, A. R.},
	AUTHOR = {Torres, R. H.},
	TITLE = {Bilinear paraproducts revisited},
	JOURNAL = {Math. Nachr.},
	VOLUME = {283},
	YEAR = {2010},
	NUMBER = {9},
	PAGES = {1257--1276},
	ISSN = {0025-584X,1522-2616},
}

\bib{MR2381883}{article}{
	AUTHOR = {Blasco, O.},
	TITLE = {Dyadic {BMO}, paraproducts and {H}aar multipliers},
	BOOKTITLE = {Interpolation theory and applications},
	SERIES = {Contemp. Math.},
	VOLUME = {445},
	PAGES = {11--18},
	PUBLISHER = {Amer. Math. Soc., Providence, RI},
	YEAR = {2007},
	ISBN = {978-0-8218-4207-2},
}

\bib{MR0631751}{article}{
	AUTHOR = {Bony, J. -M.},
	TITLE = {Calcul symbolique et propagation des singularités pour les équations aux dérivées partielles non linéaires},
	JOURNAL = {Ann. Sci. \'{E}cole Norm. Sup. (4)},
	VOLUME = {14},
	YEAR = {1981},
	NUMBER = {2},
	PAGES = {209--246},
	ISSN = {0012-9593},
}
\bib{MR2163588}{article}{
	AUTHOR = {Bownik, M.},
	TITLE = {Boundedness of operators on {H}ardy spaces via atomic decompositions},
	JOURNAL = {Proc. Amer. Math. Soc.},
	VOLUME = {133},
	YEAR = {2005},
	NUMBER = {12},
	PAGES = {3535--3542},
	ISSN = {0002-9939,1088-6826},
}

\bib{MR0440695}{article}{
	AUTHOR = {Burkholder, D. L.},
	AUTHOR = {Gundy, R. F.},
	TITLE = {Extrapolation and interpolation of quasi-linear operators on martingales},
	JOURNAL = {Acta Math.},
	VOLUME = {124},
	YEAR = {1970},
	PAGES = {249--304},
	ISSN = {0001-5962,1871-2509},
}

\bib{MR0518170}{book}{
	AUTHOR = {Coifman, R. R.},
	AUTHOR = {Meyer, Y.},
	TITLE = {Au del\`a des op\'{e}rateurs pseudo-diff\'{e}rentiels},
	SERIES = {Ast\'{e}risque},
	VOLUME = {57},
	NOTE = {With an English summary},
	PUBLISHER = {Soci\'{e}t\'{e} Math\'{e}matique de France, Paris},
	YEAR = {1978},
	PAGES = {i+185},
}

\bib{MR0763911}{article}{
	AUTHOR = {David, G.},
	AUTHOR = {Journé, J. -L.},
	TITLE = {A boundedness criterion for generalized Calderón-Zygmund operators},
	JOURNAL = {Ann. of Math. (2)},
	VOLUME = {120},
	YEAR = {1984},
	NUMBER = {2},
	PAGES = {371--397},
	ISSN = {0003-486X,1939-8980},
}

\bib{MR0259579}{article}{
	AUTHOR = {Duren, P. L.},
	AUTHOR = {Romberg, B. W.},
	AUTHOR = {Shields, A. L.},
	TITLE = {Linear functionals on {$H\sp{p}$} spaces with {$0<p<1$}},
	JOURNAL = {J. Reine Angew. Math.},
	VOLUME = {238},
	YEAR = {1969},
	PAGES = {32--60},
	ISSN = {0075-4102,1435-5345},
}

\bib{MR0447953}{article}{
	AUTHOR = {Fefferman, C. L.},
	AUTHOR = {Stein, E. M.},
	TITLE = {{$H\sp{p}$} spaces of several variables},
	JOURNAL = {Acta Math.},
	VOLUME = {129},
	YEAR = {1972},
	NUMBER = {3-4},
	PAGES = {137--193},
	ISSN = {0001-5962,1871-2509},
}
\bib{MR1110189}{article}{
	AUTHOR = {Figiel, T.},
	TITLE = {Singular integral operators: a martingale approach},
	BOOKTITLE = {Geometry of {B}anach spaces ({S}trobl, 1989)},
	SERIES = {London Math. Soc. Lecture Note Ser.},
	VOLUME = {158},
	PAGES = {95--110},
	PUBLISHER = {Cambridge Univ. Press, Cambridge},
	YEAR = {1990},
	ISBN = {0-521-40850-4},
}

\bib{MR0807149}{article}{
	AUTHOR = {García-Cuerva, J.},
	AUTHOR = {Rubio de Francia, J. L.},
	TITLE = {Weighted norm inequalities and related topics},
	SERIES = {North-Holland Mathematics Studies},
	VOLUME = {116},
	NOTE = {Notas de Matemática, 104. [Mathematical Notes]},
	PUBLISHER = {North-Holland Publishing Co., Amsterdam},
	YEAR = {1985},
	PAGES = {x+604},
	ISBN = {0-444-87804-1},
}

\bib{MR0448538}{book}{
	AUTHOR = {Garsia, A. M.},
	TITLE = {Martingale inequalities: {S}eminar notes on recent progress},
	SERIES = {Mathematics Lecture Note Series},
	PUBLISHER = {W. A. Benjamin, Inc., Reading, Mass.-London-Amsterdam},
	YEAR = {1973},
	PAGES = {viii+184},
}
\bib{MR2445437}{book}{
	AUTHOR = {Grafakos, L.},
	TITLE = {Classical {F}ourier analysis},
	SERIES = {Graduate Texts in Mathematics},
	VOLUME = {249},
	EDITION = {Second},
	PUBLISHER = {Springer, New York},
	YEAR = {2008},
	PAGES = {xvi+489},
	ISBN = {978-0-387-09431-1},
}
\bib{MR2463316}{book}{
	AUTHOR = {Grafakos, L.},
	TITLE = {Modern {F}ourier analysis},
	SERIES = {Graduate Texts in Mathematics},
	VOLUME = {250},
	EDITION = {Second},
	PUBLISHER = {Springer, New York},
	YEAR = {2009},
	PAGES = {xvi+504},
	ISBN = {978-0-387-09433-5},
}

\bib{MR3375865}{article}{
	AUTHOR = {Grafakos, L.},
	AUTHOR = {He, D.},
	AUTHOR = {Kalton, N. J.},
	AUTHOR = {Mastyło, M.},
	TITLE = {Multilinear paraproducts revisited},
	JOURNAL = {Rev. Mat. Iberoam.},
	VOLUME = {31},
	YEAR = {2015},
	NUMBER = {2},
	PAGES = {609--616},
	ISSN = {0213-2230,2235-0616},
}

\bib{MR1853518}{article}{
	AUTHOR = {Grafakos, L.},
	AUTHOR = {Kalton, N. J.},
	TITLE = {The {M}arcinkiewicz multiplier condition for bilinear operators},
	JOURNAL = {Studia Math.},
	VOLUME = {146},
	YEAR = {2001},
	NUMBER = {2},
	PAGES = {115--156},
	ISSN = {0039-3223,1730-6337},
}

\bib{hänninen2023weighted}{article}{
	AUTHOR = {Hänninen, T. S.},
	AUTHOR = {Lorist, E.},
	AUTHOR = {Sinko, J.},
	TITLE = {Weighted $L^p\to L^q$-boundedness of commutators and paraproducts in the Bloom setting},
	NOTE = {arXiv preprint arXiv:2303.14855},
	YEAR = {2023},
}

\bib{MR0372987}{article}{
	AUTHOR = {Herz, C.},
	TITLE = {{$H\sb{p}$}-spaces of martingales, {$0<p\leq 1$}},
	JOURNAL = {Z. Wahrscheinlichkeitstheorie und Verw. Gebiete},
	VOLUME = {28},
	YEAR = {1973/74},
	PAGES = {189--205},
}

\bib{MR4338459}{article}{
	AUTHOR = {Hyt\"{o}nen, T. P.},
	TITLE = {The {$L^p$}-to-{$L^q$} boundedness of commutators with applications to the {J}acobian operator},
	JOURNAL = {J. Math. Pures Appl. (9)},
	VOLUME = {156},
	YEAR = {2021},
	PAGES = {351--391},
	ISSN = {0021-7824,1776-3371},
}

\bib{MR2912709}{article}{
	AUTHOR = {Hyt\"{o}nen, T. P.},
	TITLE = {The sharp weighted bound for general {C}alder\'{o}n-{Z}ygmund operators},
	JOURNAL = {Ann. of Math. (2)},
	VOLUME = {175},
	YEAR = {2012},
	NUMBER = {3},
	PAGES = {1473--1506},
	ISSN = {0003-486X,1939-8980},
}

\bib{MR0671315}{article}{
	AUTHOR = {Janson, S.},
	AUTHOR = {Jones, P. W.},
	TITLE = {Interpolation between {$H\sp{p}$} spaces: the complex method},
	JOURNAL = {J. Functional Analysis},
	VOLUME = {48},
	YEAR = {1982},
	NUMBER = {1},
	PAGES = {58--80},
	ISSN = {0022-1236},
}

\bib{MR0131498}{article}{
	AUTHOR = {John, F.},
	AUTHOR = {Nirenberg, L.},
	TITLE = {On functions of bounded mean oscillation},
	JOURNAL = {Comm. Pure Appl. Math.},
	VOLUME = {14},
	YEAR = {1961},
	PAGES = {415--426},
	ISSN = {0010-3640,1097-0312},
}

\bib{MR2313844}{article}{
	AUTHOR = {Lacey, M. T.},
	AUTHOR = {Metcalfe, J.},
	TITLE = {Paraproducts in one and several parameters},
	JOURNAL = {Forum Math.},
	VOLUME = {19},
	YEAR = {2007},
	NUMBER = {2},
	PAGES = {325--351},
	ISSN = {0933-7741,1435-5337},
}

\bib{MR2721744}{article}{
	AUTHOR = {Lerner, A. K.},
	TITLE = {A pointwise estimate for the local sharp maximal function with
		applications to singular integrals},
	JOURNAL = {Bull. Lond. Math. Soc.},
	VOLUME = {42},
	YEAR = {2010},
	NUMBER = {5},
	PAGES = {843--856},
	ISSN = {0024-6093,1469-2120},
}
\bib{operatorfree}{article}{
	AUTHOR = {Lerner, A. K.},
	AUTHOR = {Lorist, E.},
	AUTHOR = {Ombrosi, S.},
	TITLE = {Operator-free sparse domination},
	JOURNAL = {Forum Math. Sigma},
	VOLUME = {10},
	YEAR = {2022},
	PAGES = {Paper No. e15, 28},
	ISSN = {2050-5094},
}

\bib{MR4007575}{article}{
	AUTHOR = {Lerner, A. K.},
	AUTHOR = {Nazarov, F.},
	TITLE = {Intuitive dyadic calculus: the basics},
	JOURNAL = {Expo. Math.},
	VOLUME = {37},
	YEAR = {2019},
	NUMBER = {3},
	PAGES = {225--265},
	ISSN = {0723-0869,1878-0792},
}
\bib{MR2157745}{book}{
	AUTHOR = {M\"{u}ller, P. F. X.},
	TITLE = {Isomorphisms between {$H^1$} spaces},
	SERIES = {Instytut Matematyczny Polskiej Akademii Nauk. Monografie Matematyczne (New Series)},
	VOLUME = {66},
	PUBLISHER = {Birkh\"{a}user Verlag, Basel},
	YEAR = {2005},
	PAGES = {xiv+453},
	ISBN = {978-3-7643-2431-5; 3-7643-2431-7},
}
\bib{MR2134868}{article}{
	AUTHOR = {Muscalu, C.},
	AUTHOR = {Pipher, J. C.},
	AUTHOR = {Tao, T.},
	AUTHOR = {Thiele, C.},
	TITLE = {Bi-parameter paraproducts},
	JOURNAL = {Acta Math.},
	VOLUME = {193},
	YEAR = {2004},
	NUMBER = {2},
	PAGES = {269--296},
	ISSN = {0001-5962,1871-2509},
}

\bib{MR2320408}{article}{
	AUTHOR = {Muscalu, C.},
	AUTHOR = {Pipher, J. C.},
	AUTHOR = {Tao, T.},
	AUTHOR = {Thiele, C.},
	TITLE = {Multi-parameter paraproducts},
	JOURNAL = {Rev. Mat. Iberoam.},
	VOLUME = {22},
	YEAR = {2006},
	NUMBER = {3},
	PAGES = {963--976},
	ISSN = {0213-2230,2235-0616},
}

\bib{MR1232192}{book}{
	AUTHOR = {Stein, E. M.},
	TITLE = {Harmonic analysis: real-variable methods, orthogonality, and oscillatory integrals},
	SERIES = {Princeton Mathematical Series},
	VOLUME = {43},
	NOTE = {With the assistance of Timothy S. Murphy, Monographs in Harmonic Analysis, III},
	PUBLISHER = {Princeton University Press, Princeton, NJ},
	YEAR = {1993},
	PAGES = {xiv+695},
	ISBN = {0-691-03216-5},
}

\bib{MR0781540}{book}{
	AUTHOR = {Triebel, H.},
	TITLE = {Theory of function spaces},
	SERIES = {Monographs in Mathematics},
	VOLUME = {78},
	PUBLISHER = {Birkhäuser Verlag, Basel},
	YEAR = {1983},
	PAGES = {284},
	ISBN = {3-7643-1381-1},
}

\end{biblist}
\end{bibdiv}

\end{document}